\newlength{\defbaselineskip} \setlength{\defbaselineskip}{\baselineskip}
\newtheorem{thm}{Theorem}[section]
\newtheorem{cor}[thm]{Corollary}
\newtheorem{lemm}[thm]{Lemma}
\newtheorem{lem}[thm]{Lemma}
\newtheorem{prop}[thm]{Proposition}
\theoremstyle{definition}
\newtheorem{defn}[thm]{Definition}
\newtheorem{rem}[thm]{Remark}
\tikzset{
  edge node/.code={%
      \expandafter\def\expandafter\tikz@tonodes\expandafter{\tikz@tonodes #1}}}
\tikzset{
  subseteq/.style={
    draw=none,
    edge node={node [sloped, allow upside down, auto=false]{$\subseteq$}}},
  Subseteq/.style={
    draw=none,
    every to/.append style={
      edge node={node [sloped, allow upside down, auto=false]{$\subseteq$}}}
  }
}
 \numberwithin{equation}{section}
\numberwithin{equation}{section} \theoremstyle{definition}
\DeclareMathOperator{\Pic}{Pic}
\DeclareMathOperator{\Br}{Br}
\DeclareMathOperator{\Coh}{Coh}
\DeclareMathOperator{\divi}{div}
\DeclareMathOperator{\im}{im}
\DeclareMathOperator{\Mon}{Mon}
\DeclareMathOperator{\id}{id}
\DeclareMathOperator{\Aut}{Aut}
          \newcommand\PP{{\mathbb{P}}}
          \newcommand\IR{{\mathbb{R}}}
          \newcommand\C{{\mathbb{C}  }}
            \newcommand\ZZ{{\mathbb Z}}
          \newcommand\oo{\mathcal O}
             \newcommand\Q{\mathbb Q}
          \newcommand\Z{\mathbb{Z}}
          \newcommand\V{\mathcal{V}}
          \newcommand\rk{\mathrm{rk}}
\definecolor{zielony}{rgb}{0.5, 0.9, 0.1}
\definecolor{czerwony}{rgb}{0.8, 0.2, 0.1}
\definecolor{niebieski}{rgb}{0.3, 0.1, 0.9}
\newcommand{\hk}{hyper-K\"ahler }
\newcommand{\kahl}{K\"{a}hler }
\newcommand{\kntipo}{$K3^{[n]}$ type}
\newcommand{\kntiposp}{$K3^{[n]}$ type }
\newcounter{appendice}
\begin{document}
  
\title{Verra fourfolds, twisted sheaves and the last involution}
\author[C.~Camere]{Chiara Camere}
\address{Universit\`a degli Studi di Milano, Dipartimento di Matematica, Via Saldini 50, 20133 Milano, Italy}
\email{chiara.camere@unimi.it}
\author[G.~Kapustka]{Grzegorz Kapustka}
\address{Department of Mathematics and Informatics,
Jagiellonian University, {\L}ojasiewicza 6, 30-348 Krak{\'o}w, Poland. University of Z{\"u}rich}
\email{grzegorz.kapustka@uj.edu.pl}
\author[M.~Kapustka]{Micha\l{} Kapustka}
\address{University of  Stavanger, Department of Mathematics and Natural Sciences, NO-4036 Stavanger, Norway and Department of Mathematics and Informatics,
Jagiellonian University, {\L}ojasiewicza 6, 30-348 Krak{\'o}w, Poland}
\email{michal.kapustka@uis.no}
\author[G.~Mongardi]{Giovanni Mongardi}
\address{Alma Mater Studiorum Universit\`a di Bologna, Dipartimento di Matematica, P.zza di Porta San Donato 5, 40126, Bologna, Italy}
\email{giovanni.mongardi2@unibo.it}

\begin{abstract} We study the geometry of some moduli spaces of twisted sheaves on K3 surfaces.
In particular we introduce induced automorphisms from a K3 surface on moduli 
spaces of twisted sheaves on this K3 surface. As an application we prove the unirationality of moduli
spaces of irreducible holomorphic symplectic manifolds of $K3^{[2]}$-type 
admitting non symplectic involutions with invariant lattices
$U(2)\oplus D_4(-1)$ or $U(2)\oplus E_8(-2)$. This complements the 
results obtained in \cite{MW}, \cite{BCS}, and the results from \cite{IKKR} about the geometry of IHS fourfolds constructed using the Hilbert scheme of $(1,1)$ conics on Verra fourfolds. As a byproduct we find that 
IHS fourfolds of $K3^{[2]}$-type with Picard lattice $U(2)\oplus E_8(-2)$ naturally contain non-nodal Enriques surfaces.

\end{abstract}
\maketitle
\tableofcontents
\section{Introduction}

By an irreducible holomorphic symplectic (IHS) $2n$-fold (or hyper-K\"ahler manifold) we mean
a $2n$-dimensional simply connected compact K\"{a}hler manifold $X$ with trivial canonical
bundle that admits a unique (up to a constant)
closed non-degenerate holomorphic $2$-form $\omega_X$ 
(see \cite{Beauville}). In dimension two they are called $K3$ surfaces.

Let $G \subset  \Aut(X)$ be a cyclic group of automorphisms of prime order $p$ and fix a generator $\rho \in G$. If $\rho^{\ast}\omega_X = \omega_X $ then $G$ is called symplectic. Otherwise, there exists a primitive $p$-th root of unity $\zeta$ such that $\rho^{\ast}\omega_X = \zeta \omega_X$ and $G$ is called non- symplectic.
We denote by $H^2(X,\Z)^G=\{ \alpha\in H^2(X,\mathbb{Z})| \ g(\alpha)=\alpha \ \forall g\in G$\} the invariant lattice. 
General results on non symplectic involutions on IHS fourfolds were studied in 
\cite{B}.

We study IHS fourfolds deformation equivalent to Hilbert schemes of two points on $K3$ surfaces (i.e.~of $K3^{[2]}$ type).
In this case, the classification of invariant lattices of non symplectic automorphisms of prime order was done in \cite{BCS}, \cite{BCMS} and \cite{Tari}.
Next, explicit constructions of such automorphisms in all except four cases $U(2), U(2)\oplus D_4(-1), U(2)\oplus E_8(-2)$ and $p=23$ were done in \cite{O1}, \cite{BCS}, \cite{MW},  and for $p=5$ in \cite{Tari}.
In \cite{IKKR}, the authors constructed explicitly IHS fourfolds admitting an involution with invariant lattice $U(2)$. In this paper, we deal with the cases $U(2)\oplus D_4(-1), U(2)\oplus E_8(-2)$ leaving open the last problem of the explicit construction of IHS fourfolds of $K3^{[2]}$-type admitting an automorphism of order $23$ (cf. \cite{BCMS}). Notice that, though overlooked in \cite{MW}, the case $U(2)\oplus D_4(-1)$ can also be constructed as an induced involution on a moduli space of stable sheaves on a $K3$ surface; however in this paper we provide a different construction, of independent geometrical interest.
 The results concerning the classification of IHS fourfolds of $K3^{[2]}$ type admitting a non symplectic automorphism  
 can be summarized as follows:
 \begin{enumerate}
 \item Almost all cases of such fourfolds are moduli spaces of sheaves on a K3 surface; more precisely, from \cite{MW} we have the following:
  Let $X$ be a manifold of $K3^{[2]}$-type, and let $G \subset \Aut(X)$ be a group of non symplectic automorphisms. Assume that the induced action of $G$ on the Mukai lattice containing $H^2(X,\mathbb{Z})$ fixes a copy of $U$, then there exists a $K3$ surface $S$ such that $G \subset \Aut(S),$ $X$ is a moduli space of stable objects on $S$ and the action of $G$ on $X$ is induced by that on $S$.
 This case covers all the possibilities except the cases below, and as we mentioned also includes the case $H^2(X,\Z)^G\cong U(2)\oplus D_4(-1)$.
\item The automorphism has order two and the IHS fourfold is a degeneration of a double EPW sextic (see \cite{Ferretti}, \cite[\S 5.2]{MW} and \cite{O1}). Here we have two kinds of degenerations, those which can be obtained also as in the previous case and those that cannot. The latter have invariant lattice with maximal discriminant group, and the fact that these involutions cover all cases claimed in \cite[\S 5.2]{MW} is sketched in Subsection \ref{moduli}. 
\item The automorphism has order three and the invariant lattice is either $ \langle 6 \rangle$ or $ \langle 6 \rangle \oplus E_6^{\vee}(-3)$. Both lattices are realized as invariant lattices of automorphisms on Fano varieties of lines on a cubic fourfold (see \cite[\S 6.2]{BCS}), and the fact that all families are given by the above said Fano varieties will be proven in \cite{BCScoming} in the case $ \langle 6 \rangle$ and is still open for $ \langle 6 \rangle \oplus E_6^{\vee}(-3)$.
\item The automorphism of $X$ has order two and the invariant lattice is $U(2)$. Then, from \cite{IKKR} we infer that
$X$ is a double cover of a special quartic section of the cone over $\PP^2\times \PP^2$ (described in Section \ref{EPW quartic}). 
\item The automorphism has order two and the invariant lattice is $U(2)\oplus E_8(-2)$.
\item  $p=23$, $X$ is unique with Picard number one and admits a polarization of degree $46$ (see \cite{BCMS}).
\end{enumerate}
The main result of this paper is a description of IHS fourfolds in Case $(5)$ above. 
We shall see that in this case IHS fourfolds are isomorphic to moduli spaces of twisted sheaves and the
automorphisms are induced from the underlying K3 surface. In Section \ref{induced}, we introduce and study induced automorphisms on the moduli spaces of twisted sheaves.

Next, we study the geometry of the examples in case (5) and of their moduli spaces.
A Verra threefold is a bidegree $(2,2)$ divisor in $\PP^2\times \PP^2$.
We call a Verra fourfold a $2:1$ cover of $\PP^2\times \PP^2$ branched along a Verra threefold.
Such a fourfold $V$ admits two quadric fibrations over $\PP^2$ and a natural embedding $V\subset \PP^9$ (see Section \ref{EPW quartic}).
We shall consider conics on $V\subset \PP^9$ that map to lines through the projections: we call them $(1,1)$ conics.
Recall that such a $(1,1)$ conic $c$ is contained in a natural del Pezzo surface $D_c$ of degree 4 (see Section \ref{EPW quartic}) and moves in a pencil on this surface.
The latter induces a $\mathbb{P}^1 $ fibration on the Hilbert scheme of $(1,1)$ conics. It was proven in \cite{IKKR} that the base of this fibration is a fourfold of $K3^{[2]}$-type that we shall denote by $X_V$.
Consider now the involution $\iota$ on $\PP^2\times \PP^2$ induced by non trivial involutions on both factors. 

Our main result is the following:
\begin{thm} \label{main} 
\begin{enumerate}
\item If $(X,i)$ is a general IHS fourfold of $K3^{[2]}$-type with non symplectic involution having invariant lattice $E_8(-2)\oplus U(2)$, then there exists a Verra fourfold $V\subset \PP^9$ constructed from a Verra threefold symmetric with respect to the involution $\iota$ such that $X$ is isomorphic to $X_V$.

\item If $(X,i)$ is a general IHS fourfold of $K3^{[2]}$-type with non symplectic  involution having invariant lattice $D_4(-1)\oplus U(2)$ then there exists a Verra fourfold $V\subset \PP^9$ constructed from a Verra threefold $V'\subset \PP^8$ with singularity
of analytic type $x^2 + y^2 + z^3 + t^3 = 0$ in $\C^4$. 
\end{enumerate}
\end{thm}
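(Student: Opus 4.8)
The plan is to realize $X_V$ concretely as a moduli space of twisted sheaves, to transport the geometric involution into that description so that computing its invariant lattice becomes a computation on a Mukai lattice, and then to deduce the generality statement from a dimension count together with the global Torelli theorem. First I would exploit one of the two quadric-surface fibrations $V\to\PP^2$ of Section \ref{EPW quartic}: its discriminant produces a double cover $S\to\PP^2$ which is a $K3$ surface, and the even Clifford algebra (equivalently the choice of ruling along the fibration) produces a Brauer class $\alpha\in\Br(S)$. Following \cite{IKKR} and the identification set up in Section \ref{EPW quartic}, $X_V$ is then an irreducible holomorphic symplectic fourfold isomorphic to a moduli space $\mathcal{M}_{(S,\alpha)}(v)$ of $\alpha$-twisted sheaves for a primitive Mukai vector $v$ with $v^2=2$, and the twisted Mukai isomorphism identifies $H^2(X_V,\Z)$ with $v^{\perp}$ inside the twisted Mukai lattice $\widetilde H(S,\alpha,\Z)$.

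Next I would produce and analyse the involution. For part (1) the hypothesis that the Verra threefold is $\iota$-symmetric equips $V$ with a regular involution that descends through the fibration to an involution $\sigma$ of $(S,\alpha)$; by the machinery of Section \ref{induced} this $\sigma$ induces a non symplectic involution $i$ on $X_V=\mathcal{M}_{(S,\alpha)}(v)$, whose action on $\widetilde H(S,\alpha,\Z)$ is $\sigma_*$ (possibly composed with the duality $\E\mapsto\E^{\vee}$ forced by the geometry). Restricting to $v^{\perp}$ and taking invariants then reads off $H^2(X_V,\Z)^{i}$, and the content is to check that it is isomorphic to $E_8(-2)\oplus U(2)$ — consistent with the fact, recorded in the abstract, that this is exactly the invariant lattice of an Enriques involution and that $X_V$ should therefore carry a (non-nodal) Enriques surface. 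For part (2) I would instead start from the degenerate Verra threefold $V'\subset\PP^8$: the prescribed singularity $x^2+y^2+z^3+t^3=0$ forces the discriminant curve and the twist to degenerate in a controlled way, changing the type of the induced involution so that the invariant sublattice becomes $D_4(-1)\oplus U(2)$.

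I would then match dimensions and invoke Torelli. For part (1) one computes $\dim\bigl(H^0(\oo(2,2))^{\iota}\bigr)=20$, so that the $\iota$-symmetric Verra threefolds form a $\PP^{19}$; quotienting by the centralizer of $\iota$ in $\Aut(\PP^2\times\PP^2)$, which has dimension $8$, leaves an $11$-dimensional family. This matches $\dim\mathcal{M}_{T}=11$ for $T=E_8(-2)\oplus U(2)$, since the orthogonal complement $T^{\perp}$ in the $K3^{[2]}$-lattice has signature $(2,11)$. Combining the irreducibility of $\mathcal{M}_T$, the surjectivity of the period map onto (a connected component of) the period domain $\Omega_{T^{\perp}}$, and the global Torelli theorem for manifolds of $K3^{[2]}$-type, the dominance of the construction forces the general pair $(X,i)$ to be isomorphic to some $(X_V,i)$. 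The same scheme of argument, with the parameter count adapted to the singular family, handles part (2), where $T=D_4(-1)\oplus U(2)$ and $T^{\perp}$ has signature $(2,15)$, giving the expected $15$-dimensional family.

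The main obstacle is the precise lattice identification in the second step: computing not merely the rank and signature but the exact isomorphism class of $H^2(X_V,\Z)^{i}$ requires pinning down the Brauer twist $\alpha$, the Mukai vector $v$, and the exact action of $\sigma$ (with or without the duality twist) on the twisted Mukai lattice, and then applying Nikulin's theory of discriminant forms to separate the candidate genera; the twist is essential here, since it is what reconciles the naive rank count on $v^{\perp}$ with the desired rank $10$, respectively rank $6$, invariant lattice. A secondary difficulty, specific to part (2), is to verify that the singularity $x^2+y^2+z^3+t^3=0$ is exactly what replaces the $E_8(-2)$ summand by $D_4(-1)$, and to carry out the corresponding dimension count for the singular family rigorously.
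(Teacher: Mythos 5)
Your overall architecture for part (1) --- realize $X_V$ as a moduli space of twisted sheaves, induce the involution from the surface, then combine an $11$-dimensional parameter count with irreducibility of the moduli space $\mathcal{M}^s_{10,10,0}$ and Torelli to get dominance --- agrees with the paper's (Theorem \ref{moduli-twisted}, Proposition \ref{families-dimension}, Corollary \ref{connectedness}). But the step you flag as ``the main obstacle'' is precisely where your plan does not go through, and the paper's proof is structured to avoid it. By Proposition \ref{prop:e8_lagr} and Proposition \ref{lem:15brauer-classes} one only gets that $\Pic(X_V)$ is one of the two overlattices $U(2)\oplus E_8(-2)$ or $U\oplus E_8(-2)$, according to whether the Brauer class $\beta_1$ of the quadric fibration is nontrivial or trivial; a direct proof of nontriviality of the twist for the family $\mathcal{V}_2$ is exactly what is missing --- indeed the paper states this nontriviality only as an \emph{expectation} in a remark, never proves it, and your appeal to ``Nikulin's theory of discriminant forms'' cannot separate the cases without that input. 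The paper disambiguates indirectly: there are \emph{three} types of symmetric Verra threefolds (your proposal treats only one), and for the family $\mathcal{V}_1$ the reducible discriminant of Lemma \ref{discr} yields a section of the other quadric fibration, hence a trivial Brauer class by \cite{KPS}, hence $\Pic(X_V)\cong U\oplus E_8(-2)$ with certainty. Both $\mathcal{V}_1$ and $\mathcal{V}_2$ dominate irreducible $11$-dimensional moduli spaces, so it remains to show they land in \emph{different} ones; this is done by the fixed-locus computation of Proposition \ref{fixed pts prop} (an Enriques surface for $\mathcal{V}_2$ versus an abelian surface plus a double cover of a $12$-nodal surface for $\mathcal{V}_1$), which forces $\Pic(X_V)\cong U(2)\oplus E_8(-2)$ for general $V\in\mathcal{V}_2$. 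Without this (or some substitute proving $\beta_1\neq 0$), your argument only shows that the general pair of type $U(2)\oplus E_8(-2)$ \emph{or} of type $U\oplus E_8(-2)$ comes from a symmetric Verra fourfold, which is weaker than the statement.

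For part (2) your proposed mechanism is wrong, not merely incomplete. There is no $\iota$-symmetry and no ``degenerating twist changing the type of the induced involution'' in the paper's construction: the non symplectic involution is the ordinary covering involution of the double EPW quartic section, and the extra summand $D_4(-1)$ appears because the Lagrangian $A$ is chosen with $\PP(A)\cap G(3,W)=\lbrace [N],[U_1]\rbrace$ and $\dim \PP(T_N)\cap\PP(A)=2$, which by Proposition \ref{asd} and Corollary \ref{sing} makes $X_A$ transversally $D_4$-singular along a $K3$ surface $R_N$ (double cover of $\PP^1\times\PP^1$ branched in a $(4,4)$ curve); the exceptional locus of the resolution $\overline{X_A}$ is invariant under the covering involution and enlarges the invariant lattice from $U(2)$ to $U(2)\oplus D_4(-1)$, which admits no proper finite overlattice extension. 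The singularity $x^2+y^2+z^3+t^3=0$ of the Verra threefold is then \emph{derived} from this Lagrangian condition (Proposition \ref{D4}), not imposed as a replacement of $E_8(-2)$ by $D_4(-1)$ inside the symmetric setup. Finally, your ``parameter count adapted to the singular family'' conceals the genuinely delicate point: to get the $15$-dimensional image one must show that the $(4,4)$ branch curve with its $2$-torsion sheaf $\mathfrak{I}$ determines $A$ up to finitely many choices, which the paper proves via uniqueness of the minimal locally free resolution $0\to\mathcal{T}\to\mathcal{T}^{\vee}\to\mathfrak{I}\to 0$ together with finiteness of $2$-torsion divisors on a curve; a naive dimension count of singular Verra threefolds does not by itself give generic finiteness of the period map on this family.
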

In Section \ref{moduli} we clarify what we mean by general in the above theorem by considering appropriate moduli spaces of IHS fourfolds with involutions. 
Note that a general IHS fourfold with non symplectic involution having invariant lattice $E_8(-2)\oplus U(2)$ admits a symplectic involution (and two non symplectic ones).
It follows from Section \ref{induced} that the considered IHS fourfolds are moduli spaces of twisted sheaves such that the non-symplectic involution is induced. We are however interested in a geometric construction.
It is natural to consider in Section \ref{Verras} two types of symmetric Verra fourfolds $\V_1$ and $\V_2$. 
Each Verra fourfold admits two quadric fibrations over $\PP^2$ with sextic curves as discriminant. 
One of these sextics is smooth and we denote it by $C_1$ (for a general example).
 It follows from Section \ref{derived} that those hyper-K\"ahler fourfolds are
birational to the moduli space of twisted sheaves on the K3 surface being the 
double cover of $\PP^2$ branched along $C_1$.
We prove in Section \ref{symmetric two torsion} that the general symmetric sextic occurs as the discriminant of a symmetric Verra threefold.
 This allows us to find the Picard group of hyper-K\"ahler fourfolds constructed from $\V_1$ and $\V_2$
 that are either $U(2)\oplus E_8(-2)$ or  $U\oplus E_8(-2)$.
 We conclude by an analysis of the fixed loci (see Section \ref{fixed}) of the considered involutions showing  that the examples constructed from $\mathcal{V}_1$ and $\mathcal{V}_2$ are different.

In the case $U(2)\oplus D_4(-1)$, the corresponding IHS fourfolds can be constructed by considering singular Verra fourfolds with singularities as in Theorem \ref{main}. This family of non symplectic automorphisms can be also constructed as induced automorphisms on the moduli space of sheaves on a K3 surface being the minimal resolution of the double cover of $\PP^2$ branched along a sextic with four nodes.

 Recall that involutions on K3 surfaces were studied for example in 
 \cite{Mo,vGS,V,AST,N2,OZ}.
The unirationality of the moduli space of K3 surfaces with a symplectic involution 
was studied in \cite{V} and non symplectic in \cite{Ma, DK}.
The unirationality of moduli spaces of IHS fourfolds of $K3^{[2]}$ type with involutions from case (1) can be often considered by using the  unirationality of the moduli spaces of the associated K3 surfaces (see Proposition \ref{moduli1}).
The situation in the case (5) is more complicated since there are no direct bijections between the moduli space of such IHS manifolds with a moduli space of K3 surfaces. 
Using our descriptions  of families of IHS fourfolds with Picard group from case $(5)$, we obtain as a consequence the following.

\begin{cor}\label{cor} There exist flat families with unirational bases parametrizing polarised IHS fourfolds of $K3^{[2]}$-type with polarisations that admit non symplectic involutions with invariant lattice either $U(2)\oplus E_8(-2)$ or $U(2)\oplus D_4(-1)$.
\end{cor}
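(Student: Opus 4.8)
The plan is, for each of the two invariant lattices, to exhibit an explicit algebraic family whose base is unirational and which, by Theorem~\ref{main}, dominates the corresponding moduli space of polarised IHS fourfolds, so that unirationality of the base transfers to the image. First I would treat the lattice $U(2)\oplus E_8(-2)$: here the base is the parameter space of symmetric Verra threefolds, i.e.\ the $\iota$-symmetric members of $|\oo_{\PP^2\times\PP^2}(2,2)|$, cut out by the linear ($\iota$-eigenspace) conditions, hence a projective space and a fortiori unirational; these are the threefolds underlying the two families $\V_1,\V_2$ of Section~\ref{Verras}. Over the open locus where the associated Verra fourfold $V$ and the discriminant sextic $C_1$ are smooth, the assignment $V\mapsto X_V$ of Section~\ref{EPW quartic} runs in families — one forms the relative Hilbert scheme of $(1,1)$ conics and passes to the base of the relative $\PP^1$-fibration — giving a flat family $\mathcal{X}\to B$ of $K3^{[2]}$-type fourfolds with a relative polarisation coming from the projective geometry of the construction. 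The Picard computations of Section~\ref{symmetric two torsion} identify the generic invariant lattice as $U(2)\oplus E_8(-2)$, and by Theorem~\ref{main}(1) every general such fourfold is some $X_V$, so this family dominates the moduli space.

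For the lattice $U(2)\oplus D_4(-1)$ the same template applies, with the symmetric Verra threefolds replaced by the Verra threefolds $V'$ acquiring the singularity $x^2+y^2+z^3+t^3=0$ of Theorem~\ref{main}(2). To see that this base is unirational I would let the singular point vary over $\PP^2\times\PP^2$ and note that prescribing a singularity of this fixed analytic type at a given point cuts out, inside the fibre of $|\oo(2,2)|$, an open subset of a linear (resp.\ determinantal) subspace, exhibiting the total space as fibred over a rational base with unirational fibres. Alternatively, following the remark after Theorem~\ref{main}, I would realise this family as induced automorphisms (in the sense of Section~\ref{induced}) on a moduli space of sheaves on the K3 surface obtained as the minimal resolution of the double cover of $\PP^2$ branched along a plane sextic with four nodes: fixing the four nodes — whose configuration space is rational — the admissible sextics again form a linear system, so the base is unirational, and the induced-automorphism construction provides the flat family and its polarisation.

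The step I expect to be the main obstacle is the $U(2)\oplus D_4(-1)$ case. One must check that the locus of Verra threefolds carrying \emph{exactly} the prescribed singularity (and no worse degeneration) is unirational, and that the family $V\mapsto X_V$ stays flat as $V'$ acquires this singularity, i.e.\ that the relative Hilbert scheme of $(1,1)$ conics and its $\PP^1$-fibration remain well behaved there. Verifying that the resulting fourfolds have invariant lattice exactly $U(2)\oplus D_4(-1)$, rather than a further specialisation, relies on combining the Picard computations of Section~\ref{symmetric two torsion} with the fixed-locus analysis of Section~\ref{fixed}. Granting Theorem~\ref{main}, the remaining content of the corollary is precisely to assemble these ingredients into genuine flat families over the unirational bases described above.
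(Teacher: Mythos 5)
Your first half matches the paper's proof: for $U(2)\oplus E_8(-2)$ the paper likewise observes that the space of symmetric Verra threefolds is unirational (an open subset of a linear subsystem of $|\oo_{\PP^2\times\PP^2}(2,2)|$) and concludes by the dominant map $\psi$ of Equation (\ref{kk}) constructed in the proof of Theorem \ref{main}. One caveat: you should restrict to the family $\V_2$, where $\iota$ is nontrivial on both $\PP^2$ factors; for $V\in\V_1$ the proof of Theorem \ref{main} shows $\Pic(X_V)\cong U\oplus E_8(-2)$, so your phrase ``the two families $\V_1,\V_2$'' is too generous for the lattice $U(2)\oplus E_8(-2)$.

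For $U(2)\oplus D_4(-1)$ your primary route is genuinely different from the paper's, and the obstacle you flag is precisely why the paper avoids it. The paper does \emph{not} run the singular-Verra construction in families for the corollary: flatness of $V\mapsto X_V$ as $V'$ acquires the singularity, simultaneous resolution of the transversal $D_4$ singularities of $X_A$ along the K3 surface $R_N$ of Corollary \ref{sing}, and the extension of Theorem \ref{moduli-twisted} to singular discriminant sextics are all unaddressed there --- the last is explicitly deferred to future work in the remark following Proposition \ref{D4} --- so completing your primary route would require proving statements the paper leaves open. Instead the paper takes exactly your ``alternative'' route and makes the transfer rigorous: since by \cite{MW} the involution is induced from a non symplectic involution on a K3 surface with invariant lattice $\langle 2\rangle\oplus\langle -2\rangle^{\oplus 4}$ (invariants $(5,5,1)$; geometrically, minimal resolutions of double planes branched along four-nodal sextics), Proposition \ref{moduli1} --- an identification of the period domains $D_Z\cong D_{Z'}$ together with a comparison of the monodromy groups $\Gamma_{Z'}$ and $\Gamma_{Z,K}$ --- produces a dominant rational map $\mathcal{M}_{5,5,1}^{K3}\dashrightarrow \mathcal{M}_{6,4,0}^{s}$, and $\mathcal{M}_{5,5,1}^{K3}$ is rational by Shepherd-Barron and Artebani--Kond\=o. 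This is the content your sketch gestures at with ``the induced-automorphism construction provides the flat family'': the nontrivial step is not the unirationality of the four-nodal sextics (your node-fixing argument is fine) but the passage from that family to the moduli space of pairs $(X,\iota)$, which Proposition \ref{moduli1} supplies without ever touching singular Verra fourfolds.
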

In fact, a general element of the family corresponding to the lattice $U(2)\oplus E_8(-2)$ admits three involutions one symplectic and two non symplectic with invariant lattices $U(2)$ and $U(2)\oplus E_8(-2)$. In this case, we construct a flat family of polarized IHS with polarization inducing the involution with invariant lattice $U(2)$. After composing this involution with a symplectic one we infer the one with fixed lattice $U(2)\oplus E_8(-2)$. We discuss the moduli spaces related to our construction in Section \ref{moduli}.

We first introduce some lattice theory needed in the following, then we recall the constructions from \cite{IKKR} of IHS fourfolds with involutions with invariant lattice $U(2)$; next, we introduce moduli spaces of twisted sheaves. 
In Section \ref{fixed}, we present a relation between the geometry of 
IHS fourfolds of $K3^{[2]}$ type with Picard lattice $E_8(-2)\oplus U(2)$ and some classical constructions.
We prove in Proposition \ref{fixed pts prop} that one of the non symplectic involutions has a general (non nodal) Enriques surface
as fixed locus. One can also show that the family of IHS fourfolds with Picard lattice $E_8(-2)\oplus U(2)$ is a mirror family (in the sense of \cite{Cam-AIF}) to the family of Hilbert squares $S^{[2]}$ such that  $S$ is a $K3$ cover of an Enriques surface. Note that the geometry of the fourfold $S^{[2]}$ was studied in \cite{DM} in relation to the Morin problem, it would be interesting to relate these results to our constructions.

\subsection*{Acknowledgements}
GK was supported by the project 2013/08/A/ST1/00312, MK, GM by the project
2013/10/E/ST1/00688.  
We would like to thank A. Iliev, A. Kresch, A. Kuznetsov, C. Meachan, Ch. Okonek, J. Ottem, K. Ranestad, 
P. Stellari for discussions. Finally, we want to express our gratitude to the anonymous referee for his/her precious remarks on the first version of this paper.
\section{Preliminary notions}
\subsection{Lattices}\label{ssec:lattices}
In this subsection, we briefly sketch some properties of lattices, more complete references are \cite{Nikulin_2el} and \cite{N2}. A lattice $L$ is a free $\mathbb{Z}$ module endowed with a symmetric integral form $(\cdot,\cdot)$, usually non degenerate. It is called even if the associated quadratic form takes only even values and unimodular if $L$ is isometric to the dual lattice $L^\vee$. To each even lattice, we can associate two fundamental invariants called the discriminant group and discriminant form. The group is given by the quotient $A_L:=L^\vee/L$ and the discriminant form $q_{A_L}$ is the bilinear form induced on $L^\vee$, with values modulo $2\mathbb{Z}$ (for the quadratic form) and modulo $\mathbb{Z}$ (for the bilinear form). The signature of a lattice is the signature of the bilinear form on $L\otimes \mathbb{R}$. 
We say that two even lattices belong to the same genus if they have the same rank, signature and discriminant group.
In the rest of the paper, we will deal mainly with the following lattices:
\begin{itemize}
\item Root lattices, generated by a set of roots (vectors of square $2$), with intersection of the roots determined by the corresponding Dynkyn diagram. The cases of interest are $D_4$ and $E_8$ (which is the unique unimodular rank eight positive definite lattice).
\item The hyperbolic plane $U$, generated by two elements of square $0$ and self intersection $1$.
\item A lattice $L$ with its quadratic form multiplied by $n$, which is denoted $L(n)$.
\item Orthogonal direct sums of the previous ones, denoted with the symbol $\oplus$.
\end{itemize}
A lattice $L$ is called $2$-modular (or $2$-elementary) if its discriminant group $A_L$ is $(\mathbb{Z}/2\mathbb{Z})^a$ for some $a$. An important invariant of $2$-modular lattices, usually called $\delta$, takes value $0$ if the discriminant form has values in $\mathbb{Z}/2\mathbb{Z}$ and $1$ otherwise. Nikulin \cite[Thm. 4.3.1 and 4.3.2]{Nikulin_2el} proved the following:

\begin{thm}\label{thm:nik_2el}
An even $2$-modular lattice of rank $r$ and signature $(p,r-p)$ is uniquely determined by the invariants $(r,p,a,\delta)$.
\end{thm}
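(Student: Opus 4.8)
The plan is to prove the statement by combining Nikulin's general correspondence between even lattices and discriminant forms with the explicit classification of finite quadratic forms on $2$-elementary groups. The statement packages a uniqueness assertion (the quadruple $(r,p,a,\delta)$ determines the isometry class) together with an implicit realisation of each admissible quadruple; I would treat these in turn.

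First I would reduce the uniqueness to a statement purely about discriminant forms. Recall Nikulin's criterion that an even \emph{indefinite} lattice $L$ with $\rk(L)\geq \ell(A_L)+2$, where $\ell(A_L)$ denotes the minimal number of generators of $A_L$, is the unique lattice in its genus and is therefore determined up to isometry by the pair $\bigl((p,r-p),\,q_{A_L}\bigr)$. For a $2$-modular lattice one has $A_L\cong(\Z/2\Z)^a$, so $\ell(A_L)=a$, and the criterion applies whenever $r\geq a+2$; the boundary cases $r\in\{a,a+1\}$ and the definite case must be checked separately, but these are low-dimensional and can be listed by hand. It then suffices to show that the finite quadratic form $q_{A_L}$ on $(\Z/2\Z)^a$ is itself determined, for fixed signature, by $a$ and $\delta$.

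Next I would classify the forms $q_{A_L}$. Every such form splits as an orthogonal sum of the indecomposable $2$-elementary blocks: the two rank-one forms $w^{\pm}$ on $\Z/2\Z$ with a generator of square $\pm\tfrac12\bmod 2\Z$ (which occur precisely when $\delta=1$) and the two even rank-two forms $u,v$ on $(\Z/2\Z)^2$ taking values in $\Z/2\Z$ (the $\delta=0$ building blocks). Using the standard isomorphism relations among these blocks—such as $u\oplus u\cong v\oplus v$ together with the relations interchanging the $w^{\pm}$ with the even blocks—one normalises any decomposition so that the total number of generators equals $a$ and the only surviving discrete data are $\delta$ and the residue $2p-r\bmod 8$. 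By Milgram's signature formula, $q_{A_L}$ determines $2p-r\bmod 8$, and conversely, once $a$, $\delta$ and $2p-r\bmod 8$ are prescribed, the normalised orthogonal decomposition is unique; since the theorem fixes the full signature $(p,r-p)$, the form $q_{A_L}$, and hence $L$, are determined.

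Finally, for existence I would exhibit, for each admissible quadruple, an explicit orthogonal sum of the standard lattices of the excerpt—copies of $U$, $U(2)$, $E_8$, $E_8(-2)$, $D_4$ and rank-one lattices $\langle\pm 2\rangle$—and verify directly that its rank, signature and discriminant form realise $(r,p,a,\delta)$. I expect the main obstacle to lie in the middle step: controlling the web of isomorphism relations among the $2$-adic blocks so as to guarantee that every form collapses to a unique canonical representative indexed exactly by $(a,\delta,2p-r\bmod 8)$, and in carefully handling the boundary cases of Nikulin's uniqueness criterion where the rank is too small for indefiniteness to force uniqueness in the genus.
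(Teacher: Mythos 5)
Your outline is, in architecture, exactly the standard proof of this statement (which the paper itself does not reprove --- it quotes Nikulin \cite[Thm.~4.3.1 and 4.3.2]{Nikulin_2el}): reduce to the genus via the equivalence genus $=$ (signature, discriminant form), classify the finite quadratic forms on $(\mathbb{Z}/2\mathbb{Z})^a$ by $(a,\delta,\mathrm{sign}\bmod 8)$ using the blocks $w^{\pm},u,v$ and Milgram's formula, and invoke uniqueness in the genus for indefinite lattices with $\rk \geq \ell(A_L)+2$. That skeleton is correct. But the two points you defer are genuine gaps, not routine checks. First, the definite case cannot be ``listed by hand'': the statement is \emph{false} for definite lattices. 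The lattices $E_8\oplus E_8$ and $D_{16}^{+}$ are both even, unimodular (hence $2$-elementary with $a=0$, $\delta=0$), of rank $16$ and signature $(16,0)$, yet they are not isometric. Nikulin's theorem carries an indefiniteness hypothesis, tacitly assumed here since every lattice the paper applies it to is hyperbolic; your proof must assume it as well, and no case-by-case verification can rescue the definite case.

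Second, the boundary cases $r=a$ and $r=a+1$, where the criterion $\rk\geq \ell(A_L)+2$ fails, are neither low-dimensional nor finite in number: $r=a$ occurs exactly for lattices of the form $M(2)$ with $M$ unimodular, in every rank --- and, pointedly, for the paper's central lattice $U(2)\oplus E_8(-2)$, which has $r=a=10$ (since $A_{U(2)}\cong(\mathbb{Z}/2\mathbb{Z})^2$ and $A_{E_8(-2)}\cong(\mathbb{Z}/2\mathbb{Z})^8$). So the case your sketch waves off is precisely the case this paper needs. These cases do hold, but require an actual argument: either Nikulin's refined uniqueness criterion \cite[Thm.~1.14.2]{Nikulin}, where the $2$-adic condition $\rk\geq\ell_2(A_L)+2$ is relaxed to the requirement that the $2$-adic discriminant form split off a suitable block, or a direct reduction --- if $r=a$ then $2L^{\vee}\subseteq L$ forces $L\cong M(2)$ with $M$ unimodular and indefinite, and indefinite unimodular lattices are determined by rank, signature and parity, with the parity of $M$ recovered from $\delta$; a similar reduction treats $r=a+1$. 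Two smaller remarks: the theorem as stated only asserts uniqueness, so your final existence step is not needed (and would itself require care, since not every quadruple $(r,p,a,\delta)$ is admissible --- the admissibility conditions are the content of Nikulin's Thm.~4.3.2); and your middle step, the collapse of any $2$-elementary form to a canonical representative indexed by $(a,\delta,2p-r\bmod 8)$ via $u\oplus u\cong v\oplus v$ and the companion relations, is correct and standard.
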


A sublattice $N\subset L$ is primitive if $L/N$ is torsion-free. In the opposite case, if $L/N$ is torsion, $L$ is said to be an overlattice of $N$, and, in this case, $\rk N=\rk L$ and they have the same signature; isometry classes of overlattices $L$ of a given lattice $Z$ are determined by isomorphism classes of isotropic subgroups $H\subset A_Z$. In particular, if $T$ is a primitive sublattice of $L$ and $Z$ is its orthogonal, $L$ is an overlattice of $T\oplus Z$, the finite group $H=L/(T\oplus Z)$ is isotropic inside $A_T\oplus A_Z$ and the following equality holds:
\[[L:(T\oplus Z)]^2=\frac{|A_T||A_Z|}{|A_L|}.\]
Moreover, we have that $H^\perp/H=A_L$.
For an element $l\in L$, we can define the divisibility $\divi(l):=d$ as the positive generator of the ideal $(l,L)$. This also defines a map, which we will call discriminant projection, $L\rightarrow A_L$ where the image of $l\in L$ is the class of $l/\divi(l)$. 

For our purposes the following criterion, called usually Eichler's criterion, will be useful:
\begin{thm}\cite[Thm. 2.9]{BHPV}\label{thm:eichstrong}
Let $L,M$ be two even lattices and let $U^r\subset M$ and $A_L\subset A_M$ with the additional condition $q_{A_L}=q_{A_M|A_L}$. Then the following holds:
\item If $r\geq\rk(L)$, then there exists a primitive embedding of $L$ in $M$
\item If $r\geq\rk(L)+1$ then all primitive embeddings of $L$ in $M$ with isometric discriminant projections of $L$ in $M$ are equivalent by an isometry of $M$.
\end{thm}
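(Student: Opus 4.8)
The plan is to establish both parts via the classical machinery of Eichler--Siegel transvections, which is the standard engine behind this form of the criterion (due to Eichler, with the lattice-theoretic packaging of Nikulin). The essential tool is the following: for an isotropic vector $e\in M$ and a vector $a\in M$ with $(a,e)=0$, the Siegel--Eichler transformation
\[ t_{e,a}(x)=x-(a,x)\,e+(e,x)\,a-\tfrac{1}{2}(a,a)(e,x)\,e \]
is an isometry of $M$ that acts trivially on $A_M$. The key lemma I would isolate first is that, whenever $M$ contains $U^r$ with $r\ge 2$, the subgroup of $\mathrm{O}(M)$ generated by such transvections (the stable orthogonal group) acts transitively on the set of primitive vectors $v\in M$ having a fixed square $(v,v)$ and a fixed image $v/\divi(v)\in A_M$. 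This is proved by using one hyperbolic plane $U=\langle e,f\rangle$ to normalise the $f$-coordinate of $v$ and a second copy of $U$ to clear the remaining coordinates, reducing any such $v$ to a normal form depending only on $(v,v)$ and its discriminant class.

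For the existence statement, I would first use that $U$ is unimodular to split $M=U^r\oplus M'$ with $M'=(U^r)^\perp$. Writing $G=((\ell_i,\ell_j))$ for the Gram matrix of a basis $\ell_1,\dots,\ell_n$ of $L$ with $n=\rk(L)\le r$, I would set, inside $U^n=\bigoplus_{i=1}^n\langle e_i,f_i\rangle$,
\[ v_i=e_i+\tfrac{1}{2}(\ell_i,\ell_i)\,f_i+\sum_{j>i}(\ell_i,\ell_j)\,f_j, \]
which is integral precisely because $L$ is even. A direct check gives $(v_i,v_j)=(\ell_i,\ell_j)$, so $\ell_i\mapsto v_i$ is an isometric embedding; it is primitive because the $e$-components of the $v_i$ already span a direct summand of $U^n$. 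Composing with $U^n\subseteq U^r\subseteq M$ yields a primitive embedding of $L$, proving the first part. The hypotheses $A_L\subset A_M$ and $q_{A_L}=q_{A_M}|_{A_L}$ are what make the discriminant projection in the second part meaningful and prescribable: compatibility of the forms is exactly what permits an embedding whose orthogonal complement carries the complementary discriminant data, so that the notion of \emph{isometric discriminant projections} is non-vacuous.

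For the uniqueness statement I would argue by induction on $\rk(L)$, the base case $\rk(L)=1$ being the key lemma itself (which needs $r\ge 2=\rk(L)+1$). Given two primitive embeddings $\phi_1,\phi_2\colon L\hookrightarrow M$ with isometric discriminant projections, choose a primitive $\ell\in L$. Then $\phi_1(\ell)$ and $\phi_2(\ell)$ are primitive in $M$ with equal square and, by the hypothesis on discriminant projections, equal class in $A_M$; the key lemma provides $g\in\mathrm{O}(M)$ with $g(\phi_1(\ell))=\phi_2(\ell)$. Replacing $\phi_1$ by $g\circ\phi_1$, the two embeddings agree on $\ell$ and restrict to primitive embeddings of the even lattice $\ell^{\perp}_{L}$ of rank $\rk(L)-1$ into $K:=\phi_2(\ell)^{\perp}\subset M$. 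Here the sharper hypothesis $r\ge\rk(L)+1$ enters: after moving $\phi_2(\ell)$ into a single copy of $U$, the remaining $r-1$ hyperbolic planes survive in $K$, so $K\supseteq U^{r-1}$ with $r-1\ge\rk(\ell^{\perp}_{L})+1$, and the inductive hypothesis yields an isometry of $K$ matching the two restrictions; extending it by the identity on $\phi_2(\ell)$ finishes the step.

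The main obstacle, and the part demanding the most care, is the discriminant bookkeeping under passage to orthogonal complements: one must verify that the restricted embeddings into $K$ again have isometric discriminant projections (so the inductive hypothesis genuinely applies), track divisibilities and classes through the splitting, and confirm that $K$ really contains the required hyperbolic planes. Proving the transitivity lemma in full --- in particular that the normal form depends only on $(v,v)$ and the class $v/\divi(v)\in A_M$ --- is the other technical heart of the argument. An alternative, more structural route would be to specialise Nikulin's general classification of primitive embeddings (\cite{N2}, Prop.~1.15.1) to the situation where $U^r$ splits off as a direct summand, but that approach trades the transvection computations for an equally delicate analysis of gluing subgroups.
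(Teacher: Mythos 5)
The paper contains no proof of this statement: it is imported verbatim as background, with a citation to \cite{BHPV} (the result goes back to Eichler, in the lattice-theoretic form popularized by Nikulin), so there is no internal argument to compare yours against; I can only judge your reconstruction, which follows the standard route and is essentially correct. Your existence step checks out completely: $U^r$ is unimodular and hence splits off $M$ orthogonally, your triangular vectors $v_i$ are integral precisely by evenness of $L$, realize the Gram matrix, and span a direct summand of $U^n$ because of the identity block in the $e$-coordinates; note that this uses only $r\geq\rk(L)$ and none of the discriminant hypotheses, which (as you correctly observe) serve to make the uniqueness statement meaningful rather than to enable existence.

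In the uniqueness induction two points need tightening beyond what you flag. First, \emph{``extending by the identity on $\phi_2(\ell)$''} is not automatic: $\langle\phi_2(\ell)\rangle\oplus K$ sits inside $M$ with index equal to $\divi(\phi_2(\ell))$, so $\id\oplus h$ preserves $M$ only if $h$ acts trivially on the glue group. You must therefore strengthen the inductive statement so that the matching isometry of $K$ lies in the stable orthogonal group $\widetilde{O}(K)$ (trivial action on $A_K$); this is available within your framework, since the key lemma is proved by Eichler transvections, which act trivially on the discriminant, and then $(\id\oplus h)(x)-x\in K\subseteq M$ for every $x\in M$, so the extension exists and is itself stable. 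Second, your phrase ``after moving $\phi_2(\ell)$ into a single copy of $U$'' is slightly off: the Eichler normal form of a primitive vector of divisibility $d$ is $d e_1+\frac{v^2-w^2}{2d}f_1+w$ with $w\in (U^r)^{\perp}$ a representative of $d$ times its discriminant class, so the vector lands in $U_1\oplus (U^r)^{\perp}$, not in $U_1$, unless that class vanishes; the conclusion you actually need, $U^{r-1}\subseteq K$, still holds because this normal form is orthogonal to the remaining $r-1$ hyperbolic planes. One further small verification worth recording: equal (not merely abstractly isometric) classes $[\phi_1(\ell)/d_1]=[\phi_2(\ell)/d_2]$ in $A_M$ force $d_1=d_2$, because the class of a primitive vector of divisibility $d$ has exact order $d$ in $A_M$, so the rank-one lemma applies with both invariants matched. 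With these repairs, and the $A_K$-bookkeeping for the restricted embeddings that you already identify as the technical heart, your sketch agrees with the proofs in the literature that the paper is implicitly relying on.
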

We will be interested also in some lattices associated with $K3$ or abelian surfaces: recall that to such a surface $S$ one can associate a weight two Hodge structure given by $H^{2*}(S)$, where the $2,0$ part coincides with $H^{2,0}(S)$, whose integer part is called the Mukai lattice $\Lambda$ (or $\Lambda_S$ if needed) and is endowed with the intersection pairing. An element $(a,b,c)\in H^{2*}(S,\mathbb{Z})=\Lambda$, with $a\in H^0$, $b\in H^2$ and $c\in H^4$, is called a Mukai vector.

\subsection{Double EPW quartic}\label{EPW quartic}
Let us introduce the notation from \cite{IKKR} and \cite{IKKR1} necessary to study the geometry of the IHS fourfolds considered in this paper. 
We recall the construction of a $19$-dimensional family of IHS fourfold of $K3^{[2]}$-type with non symplectic involution and Beauville-Bogomolov degree $q=4$.
These IHS fourfolds are constructed as natural double covers of Lagrangian degeneracy loci in the cone over $\PP^2\times \PP^2$. Let us be more precise. 

Let $U_1$ and $U_2$ be two three dimensional complex vector spaces, fix a volume form on each space $U_1$, $U_2$ such that
$\wedge^2U_1 \simeq U_1^{\vee}$ and $\wedge^2U_2 \simeq U_2^{\vee}$ and let $$\eta\colon \wedge^3U_1 \otimes \wedge^3U_2\to \C$$ be the product volume form. 
Let $V\subset \PP^9$ be the intersection of the cone $$C(\PP(U_1 )\times \PP(\wedge^2 U_2 )) \subset \PP(\wedge^3 U_2 \oplus(U_1 \otimes \wedge^2 U_2 ))$$
with a quadric hypersurface. Note that $V$ can be seen as the double cover of $\PP^2\times \PP^2$ branched along a $(2,2)$ divisor and we call it a \emph{Verra fourfold}. The two projections $\pi_1$ and $\pi_2$ to $\PP^2$ define quadric bundle structures on $V$ with sextic curves as discriminant loci, cf. \cite{Vprym}.
Denote by $F(V)$ the Hilbert scheme of conics on $V\subset \PP^9$ projecting to lines through $\pi_1$ and $\pi_2$, i.e~$(1,1)$ conics.

Let $c\in F(V)$, then $c\subset \PP^9$ spans a plane $P_c \subset \PP(\wedge^3 U_2 \oplus
(U_1 \otimes \wedge^2U_2 ))$. Consider the locus $H$ of quadrics containing $V \cup P_c\subset \PP^9$. Now if we assume that $V$ is smooth then for each $c$  the space of quadrics containing $V\subset \PP^9$ 
is a hyperplane (see \cite[Lem.~3.1]{IKKR}) that we denote by $H_c$, i.e.~naturally a point $H_c \in \PP(\wedge^3U_2 \oplus (\wedge^2U_1\otimes U_2 ))$. We defined a morphism
$$\varphi_V \colon F(V)\to \PP(\wedge^3U_1 \oplus (\wedge^2U_1 \otimes U_2 )). $$
The image $Y_V$ of $\varphi_V$ is isomorphic to the intersection of the cone over the Segre embedding $C(\PP^2\times \PP^2)\subset \PP^9$ with a special quartic hypersurface (explicitly described in \cite{IKKR}). We call it an \emph{EPW quartic section}.
The map $\varphi\colon F(V)\to Y_V$ factorizes by a $\PP^1$ fibration $F(V)\to X_V$ and a $2:1$ cover $X_V\to Y_V$. For general $V$ ($V$ smooth is not enough) the manifold $X_V$ is an IHS fourfold of $K3^{[2]}$-type with Beauville-Bogomolov degree $4$. 

Recall that $Y_V\subset C(\PP(\wedge^2 U_1)\times \PP(U_2))$ can be described (see the proof of \cite[Prop.~3.2]{IKKR}) as a Lagrangian degeneracy locus. 
Let us denote $W=U_1\oplus U_2$, then for  $A\in LG_{\eta}(10,\wedge^3(W))$, the authors considered in \cite{IKKR1}
$$D^A_i=\{v\in G(3,W) | \ \dim (\PP(T_v)\cap \PP(A))\geq i  \},$$
with $\PP(T_v)$ the embedded tangent space at $v$ to $G(3,W)$,
and called $D_2^A$ an EPW cube.
The cone $C(\PP(\wedge^2 U_1)\times \PP(U_2))$ can be seen as the intersection of the Grassmannian $G(3,W)$ with $\PP(T_{[U_1]})$, its embedded tangent space at $[U_1]$:
$$C(\PP(\wedge^2 U_1)\times \PP(U_2))=\PP(W\wedge (\wedge^2U_1 ))\cap G(3, W)\subset \PP(\wedge^3 W).$$
For $Y_V\subset C(\PP^2\times \PP^2)$, we can find $A\in LG_{\eta}(10,\wedge^2 W)$ with $[U_1]\in G(3,W)\cap \PP(A)$ 
such that $$Y_V=D_2^A\cap \PP(W\wedge (\wedge^2U_1 ))=Y_A.$$

Finally one can reconstruct from  such $A$ the Verra threefold $V_A$ that gives $Y_A$ through the map $\varphi_{V}$.
Observe that for any choice of disjoint spaces $U_1, U_2$ the form $\eta$ provides an isomorphism:
$$T_{[U_1]}/<[U_1]>=\wedge^2 U_1\otimes U_2\simeq   (\wedge^2 U_2\otimes U_1)^{\vee}= (T_{[U_2]}/<[U_2]>)^{\vee}
$$
For our choice of $[U_2]\in G(3,W)$, consider now
$$q_{A,U_2}: T_{[U_2]}/<[U_2]> \to T_{[U_1]}/<[U_1]>\simeq (T_{[U_2]}/<[U_2]>)^{\vee}$$ the symmetric map whose graph is $\bar{A}:=A/<[U_1]>$ and let $Q_{A,U_2}$ be the corresponding quadric. 
Let $C_{U_2}=\PP(T_{[U_2]})\cap G(3,W)$ and $P_{U_2}=\PP(\wedge^2 U_2)\times \PP(U_1)$ be the corresponding Segre embedding $$\mathbb{P}^2\times \mathbb{P}^2\subset \PP(T_{[U_2]}/<[U_2]> )\simeq \PP( (T_{[U_1]}/<[U_1]>)^{\vee}).$$
Define $V'_{A,U_2}=P_{U_2}\cap Q_{A,U_2}$ the Verra threefold associated to $A$ and $U_2$ (and the volume forms on $U_1$ and $U_2$).

\begin{lem}[cf.~\cite{IKKR}] The threefold
$V'_{A,U_2}$ is independent of $U_2$ and of the volume forms. We denote it by $V'_A$.
\end{lem}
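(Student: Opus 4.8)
I would recast the whole construction as symplectic linear algebra in the reduction of $\wedge^3 W$ by the isotropic line $\wedge^3 U_1\subset A$, and isolate exactly which data are intrinsic to $A$ and $U_1$ alone. Write $\omega=\eta(\,\cdot\wedge\cdot\,)$ for the symplectic form on $\wedge^3 W$. Since $\wedge^3 U_1$ is isotropic and $A$ is Lagrangian, $A\subset(\wedge^3 U_1)^\perp$, and a direct check gives $(\wedge^3 U_1)^\perp = U_1\wedge\wedge^2 W$ (of dimension $19$). Hence the reduction $M:=(U_1\wedge\wedge^2 W)/\wedge^3 U_1$, the Lagrangian $\bar A = A/\wedge^3 U_1$, and the Lagrangian $M_1 := (\wedge^2 U_1\wedge W)/\wedge^3 U_1 = T_{[U_1]}/\langle[U_1]\rangle$ depend only on $U_1$ and the scaling class of $\eta$. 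The quotient $N := M/M_1$ is canonically $U_1\otimes\wedge^2(W/U_1)$, carrying the canonical Segre $\Sigma = \PP(U_1)\times\PP(\wedge^2(W/U_1))$; for any complement $U_2$ the projection $p\colon M\to N$ restricts to an isomorphism $M_2 = \wedge^2 U_2\otimes U_1\xrightarrow{\sim} N$ sending $u_1\wedge\beta\mapsto u_1\otimes\bar\beta$, hence carrying $\PP(\wedge^2 U_2)\times\PP(U_1)$ onto $\Sigma$. Thus all the ambient Segres are canonically identified, $V'_{A,U_2}$ transports to $\Sigma$ cut out by the image quadric, and the only possible dependence on $U_2$ is through this quadric.

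Next I would compute the quadric value at a point of $\Sigma$ and show it is independent of the complement. Fix $([u_1],[\bar\beta])\in\Sigma$. For a complement $U_2$ let $\beta\in\wedge^2 U_2$ lift $\bar\beta$ and set $w = u_1\wedge\beta\in M_2$; writing the unique $a\in\bar A$ over $p(w)=u_1\otimes\bar\beta$ (using $\bar A\cap M_1 = 0$) as $a = w + q_{A,U_2}(w)$ with $q_{A,U_2}(w)\in M_1$, the defining form takes the value $\eta\bigl(q_{A,U_2}(w)\wedge w\bigr) = \omega(a,w)$, since $w\wedge w = 0$. For a second complement $U_2'$ with lift $\beta'\in\wedge^2 U_2'$ of the same $\bar\beta$, put $w' = u_1\wedge\beta'$; then $\beta'-\beta\in U_1\wedge W$ (the kernel of $\wedge^2 W\to\wedge^2(W/U_1)$), so $w'-w = u_1\wedge(\beta'-\beta)\in\wedge^2 U_1\wedge W$ represents an element of $M_1$, whence $p(w')=p(w)$ and the same $a$ serves. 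Therefore the difference of the two values is
\[
\omega(a,\,w'-w) \;=\; \omega(w,\,w'-w) + \omega\bigl(q_{A,U_2}(w),\,w'-w\bigr).
\]
The second summand vanishes because $M_1$ is Lagrangian and both $q_{A,U_2}(w)$ and $w'-w$ lie in $M_1$; the first equals $\omega(w,w') = \eta\bigl((u_1\wedge\beta)\wedge(u_1\wedge\beta')\bigr)$, which is zero because both factors are divisible by $u_1$, so their wedge contains $u_1\wedge u_1 = 0$. Hence the two quadrics agree at every point of $\Sigma$, giving $V'_{A,U_2} = V'_{A,U_2'}$.

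For the volume forms I would simply note that rescaling the chosen form on $U_1$ or $U_2$ rescales $\eta$ by a nonzero constant; this leaves the set of $\eta$-Lagrangians unchanged (so $A$, $\bar A$ and $M_1$ are unaffected) and multiplies the symmetric form $q_{A,U_2}$ by a scalar, hence does not change its zero locus, while $\Sigma$ does not involve the volume forms. Combining the two steps, $V'_{A,U_2}$ depends neither on the complement $U_2$ nor on the volume forms, and may be denoted $V'_A$.

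The main obstacle is the middle step: organizing the change of complement so that the difference of the two quadratic forms on the common Segre $\Sigma$ splits into the two displayed terms, and seeing that both vanish, one by the isotropy of $M_1$ and the other by the fact that the two representatives $w,w'$ of a Segre point share the common factor $u_1\in U_1$. Checking cleanly that $w'-w$ represents a class of $M_1$ --- equivalently that $\beta'-\beta\in U_1\wedge W$, which is exactly the statement that $U_2$ and $U_2'$ are both complements of $U_1$ --- is where the geometry of the construction, rather than formal symplectic linear algebra, enters.
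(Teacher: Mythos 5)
Your proposal is correct and takes essentially the paper's approach: since $M_1$ is Lagrangian, your canonical quotient $N=M/M_1$ is identified by the symplectic form with $M_1^{\vee}=(T_{[U_1]}/\langle[U_1]\rangle)^{\vee}$, so your identification of the ambient Segres is exactly the paper's canonical isomorphism $T_{[U'_2]}/\langle[U'_2]\rangle\simeq (T_{[U_1]}/\langle[U_1]\rangle)^{\vee}\simeq T_{[U_2]}/\langle[U_2]\rangle$, and your pointwise vanishing computation on the Segre is precisely a proof of the containment $Q_{A,U_2}-Q_{A,U'_2}\in I_{P_{U_2}}$ that the paper asserts without detail. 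You merely make explicit what the paper leaves implicit (including the volume-form rescaling), so there is no genuine divergence or gap.
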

\begin{proof}
 Observe that, if we choose a different  $[U'_2]\in G(3,U)$, we have a canonical isomorphism induced by the symplectic form $T_{[U'_2]}/<[U'_2]> \simeq(T_{[U_1]}/<[U_1]>)^{\vee}\simeq  T_{[U_2]}/<[U_2]>$ and under this identification we have $Q_{A,U_2}-Q_{A,U'_2}\in I_{P_{U_2}}\simeq I_{P_{U_2'}}$
\end{proof}

Finally let $V'\in \PP(U_1)\times \PP(\wedge^2 U_2)$ be a  Verra threefold. We claim that $V'$ appears in the construction above. Indeed $V'$ is obtained as the intersection 
 $\PP(U_1)\times \PP(\wedge^2 U_2)$ with some quadric hypersurface $Q\subset \mathbb{P}(U_1\otimes \wedge^2 U_2)$. Then $Q$ is defined by a quadric equation which corresponds to a symmetric linear map:
$$\tilde{q}: U_1\otimes \wedge^2 U_2 \to (U_1\otimes \wedge^2 U_2)^{\vee}$$ 
If we now fix  a volume form on $W=U_1\oplus U_2$ we obtain an identification:
$$(U_1\otimes \wedge^2 U_2)^{\vee}=  \wedge^2 U_1\otimes U_2$$
which composed with $\tilde{q}$ gives a linear map:
$$q: U_1\otimes \wedge^2 U_2 \to \wedge^2 U_1\otimes U_2$$ 
whose graph is a space $\bar{A}\subset U_1\otimes \wedge^2 U_2 \oplus \wedge^2 U_1\otimes U_2$ which is Lagrangian with respect to the 2-form $\eta'$ given by wedge product in $U_1\otimes \wedge^2 U_2 \oplus \wedge^2 U_1\otimes U_2$. Now $$A=\bar{A}\oplus \wedge^3 U_1 \subset \wedge^3 (U_1\oplus U_2)$$ is a Lagrangian space with respect to the form $\eta$ on $\wedge^3 (U_1\oplus U_2)$. Then $V'_{A,U_2}$ is isomorphic to $V'$.

\subsection{Twisted sheaves on $K3$ or abelian surfaces}\label{ssec:twist}
In this subsection, we provide a quick introduction to twisted sheaves, focused on the few concepts needed for our purposes. For a concise yet rather complete overview of the topic, we refer to \cite{H2}. Let $S$ be a $K3$ or abelian surface and let $\alpha$ be a class of finite order in the Brauer group $\Br(S):=H^2(\mathcal{O}^*_S)$. The category $\Coh(S,\alpha)$ represents coherent sheaves whose second gluing condition is twisted by a \v{C}ech cocycle of class $\alpha$. Thus, we define the twisted $K3$ surface $(S,\alpha)$ by defining its category of twisted sheaves. Two twisted K3 surfaces are isomorphic if there is an isomorphism of the two surfaces sending one Brauer class into the other.

Recall, from \cite{vG}, that an element from the Brauer group can be seen as a  homomorphism
from the transcendental lattice $T(S)\to \Q/\ZZ$. Moreover, an element $\alpha$ of order $n$ defines a surjective homomorphism
$T(S)\to \ZZ/n\ZZ$, whose kernel is an index $n$ sublattice $\Gamma_\alpha$ of the transcendental lattice $T(S)$. By the exponential sequence, we can lift the class $\alpha$ to an element in $H^2(S,\mathbb{Q})$, which is determined up to elements in $H^2(S,\mathbb{Z})+\frac{1}{n}\Pic(S)$. Such an element $B$ is called a $B$-lift of $\alpha$. 
The $B$-lift allows us to define a Hodge structure $\widetilde{H}(S,\alpha)$ for the $K3$ surface $S$, as in \cite[Lem.~3.1]{MS}: the symplectic form is defined as $\sigma_{S,B}:=\sigma_S+B\wedge \sigma_S$, where $\sigma_S$ is the symplectic form on $S$ and the definition only depends on the $(0,2)$-part of $B$. This twisted Hodge structure lies inside the lattice $H^{2*}(S)$ and, as a lattice, is a finite order extension of the lattice generated by $H^2(S,\ZZ)$, the class $(0,0,1)$ and the class $(k,kB,kB^2/2)$, where $k$ is the smallest integer such that $(k,kB,kB^2/2)$ is an integer class (\cite[Proof of Lemma 2.5]{H}). The Brauer classes which we will consider in this paper admit a lift with $B^2=0$ and $B.H=0$; in this case $\Pic(S,\alpha)$ is the sublattice generated by $\Pic(S),$ $(0,0,1)$ and the class $(n,nB,0)$, where $n$ is the order of $\alpha$, because this sublattice is primitive. Different $B$-lifts produce isometric Hodge structures.

\begin{rem}
Let $(S,\alpha)$ be as above, and let $\varphi\in \Aut(S)$ be of finite order such that $\varphi(\alpha)=\alpha$, and let $\varphi(\sigma)=\xi\sigma$ for some root of unity $\xi$ (possibly trivial). Let us look at the induced action of $\varphi$ on $\widetilde{H}(S,\alpha)$. Let $B$ be a $B$-lift of $\alpha$, notice that also $\varphi(B)$ is a $B$-lift of $\alpha$. Then $\varphi(\sigma_B)=\varphi(\sigma)+\varphi(B)\wedge \varphi(\sigma)=\xi\sigma_{\varphi(B)}$, and $\varphi$ acts trivially on $\langle (0,0,1),(k,kB,kB^2/2) \rangle$. (Actually, it sends $(k,kB,kB^2/2)$ to $(k,k\varphi(B),k\varphi(B)^2/2)$ but these two classes are the same in $\widetilde{H}(S,\alpha)$).
\end{rem}

It is possible to construct moduli spaces of twisted sheaves as in \cite{Yo}, which naturally have a symplectic form (indeed, they are of $K3^{[n]}$ type if $S$ is a $K3$, and a fibre of their Albanese map is of Kummer type if $S$ is abelian). Given an ordinary Mukai vector $v=(r,H,s)\in H^{2*}(S)$, one can define a twisted Mukai vector $v_{B}$ as $v_B:=(r,H+rB,s+B\wedge H+rB^2/2)$, which is an element of $\widetilde{H}(S,\alpha)$ if and only if $n$ divides $r$. There are several definition of the twisted Mukai vector, for a complete comparison we refer to \cite[Section 4]{PT}, here we have used Huybrechts--Stellari definitions. Then a moduli space of twisted sheaves with Mukai vector $v_B$ exists, has dimension $v_B^2+2$ and is denoted $M_{v_B}(S,\alpha)$. Moreover, for $v_B^2\geq 2$ (or 6 if $S$ is abelian) there is the following isometry:
$$v_B^\perp \cong H^2(M_{v_B}(S,\alpha),\mathbb{Z}),$$
where the orthogonal is taken inside the Mukai lattice. 
For our purposes, it is more convenient to denote the moduli space of twisted sheaves with an untwisted Mukai vector and a Brauer class, as different choices of a $B$-lift determine equivalent twisted Mukai vectors $v_B$ (and the same category $\mathcal{C}oh(S,\alpha)$).
\begin{rem}
The paper \cite{Yo} focuses mainly on moduli spaces of twisted sheaves on $K3$ surfaces, the above mentioned results for sheaves on abelian surfaces can be found in related works of Yoshioka \cite{yoshi_der}, and Minamide, Yanagide and Yoshioka \cite[Section 4]{myy}.
\end{rem}

\section{Induced automorphisms on moduli spaces of twisted sheaves}\label{induced}
In this section we will generalize the work of the last author and Wandel \cite[Thm. 4.4 and 4.5]{MW} to the case of moduli spaces of twisted sheaves and their automorphisms induced by automorphisms of the underlying surface. This construction will provide a first way to construct the family of fourfolds with the desired non-symplectic involutions; moreover, we obtain as a side result Theorem \ref{thm:twistinduced}.

 The main technical tool here is a result of Huybrechts \cite[Lemma 2.6]{H}, which extends to the twisted case a result of Addington \cite[Prop. 4]{A} and the above cited work \cite[Prop. 3.3 and 3.4]{MW}. The original result is phrased only for manifolds which are of \kntipo, but it easily extends to generalized Kummer's:
Let $X$ be a manifold of \kntiposp (resp. Kummer type) and let $$i\,:\,H^2(X,\mathbb{Z})\rightarrow \Lambda_{K3}:=U^4\oplus E_8(-1)^2$$ (resp. $\Lambda_{Ab}:=U^4$) be a primitive embedding. Endow $\Lambda_{K3}\otimes \C$ (resp. $\Lambda_{Ab}\otimes \C)$ with the weight two Hodge structure given by the symplectic form on $X$, that is $\Lambda_{K3}^{2,0}=i(\sigma_X)$ (resp. $\Lambda_{Ab}^{2,0}=i(\sigma_X)$). 
\begin{prop}\cite{H}
Keep notation as above, then $X$ is a moduli space of twisted sheaves (resp. the Albanese fibre of a moduli space of twisted sheaves) with a $n$ torsion Brauer class on a $K3$ surface (resp. abelian surface) if and only if $\Lambda_{K3}^{1,1}$ (resp. $\Lambda_{Ab}^{1,1}$) contains a copy of $U(k)$, for some multiple $k$ of $n$.
\end{prop}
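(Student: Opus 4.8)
The statement is Hodge-theoretic in nature, so the plan is to reduce the geometric conclusion to a reconstruction of a twisted $K3$ (resp. abelian) surface from the Mukai--Hodge structure on $\Lambda_{K3}$ (resp. $\Lambda_{Ab}$), and then to invoke Torelli. Write $v\in\Lambda_{K3}$ for the primitive generator of the orthogonal complement of $i(H^2(X,\Z))$; since $X$ is of $K3^{[n]}$-type one has $v^2=2n-2>0$, and because $v\perp\sigma_X$ the vector $v$ lies in $\Lambda_{K3}^{1,1}$. The goal is then to produce a twisted surface $(S,\alpha)$ with $\mathrm{ord}(\alpha)\mid n$ together with a Hodge isometry $\Lambda_{K3}\cong\widetilde H(S,\alpha)$ carrying $v$ to a primitive algebraic Mukai vector $v_B$ of square $2n-2$. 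Granting this, the isometry $v_B^\perp\cong H^2(M_{v_B}(S,\alpha),\Z)$ recorded in \S\ref{ssec:twist} identifies $H^2(M_{v_B}(S,\alpha),\Z)$ with $i(H^2(X,\Z))$ as Hodge structures, and Verbitsky's Torelli theorem upgrades this to $X\cong M_{v_B}(S,\alpha)$, provided the moduli space is non-empty and of $K3^{[n]}$-type; the latter follows from Yoshioka's results \cite{Yo} (and Bayer--Macr\`i in the derived setting). The abelian case is handled identically, replacing the $K3$ lattice by $U^3$ and \cite{Yo} by \cite{myy}.

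For the forward implication, suppose $X=M_{v_B}(S,\alpha)$ with $\mathrm{ord}(\alpha)=n$. By the description of $\Pic(S,\alpha)=\widetilde H(S,\alpha)^{1,1}$ recalled in \S\ref{ssec:twist}, the classes $(0,0,1)$ and $(n,nB,nB^2/2)$ are algebraic. A direct Mukai-pairing computation shows that both are isotropic and pair to $-n$, so they span a copy of $U(n)\subseteq\Lambda_{K3}^{1,1}$; since $n$ is a multiple of $n$ this gives one direction. (If only $\mathrm{ord}(\alpha)=d\mid n$, one first obtains $U(d)$ and then replaces the second generator by its $(n/d)$-th multiple to land in $U(n)$.)

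For the converse, let $U(k)=\langle e,f\rangle\subseteq\Lambda_{K3}^{1,1}$ with $e,f$ isotropic, $(e,f)=k$ and $n\mid k$. I would take $e$ to be primitive isotropic and consider the weight-two Hodge structure induced on $K:=e^\perp/\langle e\rangle$; it is of $K3$ type because $\sigma_X\in e^\perp\otimes\C$, and $\sigma_X$ descends to the period point of a would-be surface. Using Eichler's criterion (Theorem \ref{thm:eichstrong}) I would arrange $\divi(e)=1$, so that $K\cong U^3\oplus E_8(-1)^2$ is the $K3$ lattice; surjectivity of the period map then yields a $K3$ surface $S$ with a Hodge isometry $H^2(S,\Z)\cong K$, hence $\widetilde H(S,\Z)\cong\Lambda_{K3}$. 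The complementary vector $f$, pushed into $K\otimes\Q$, provides a $B$-field $B$ with $kB\in K$; the hypothesis $n\mid k$ is exactly what permits normalising $B$ so that the associated Brauer class $\alpha=[B]\in\Br(S)$ is $n$-torsion and $\widetilde H(S,\alpha)\cong\Lambda_{K3}$ as Hodge structures. It then remains to check that $v$ becomes a primitive algebraic Mukai vector $v_B$ of square $2n-2$ and to feed this into the reduction of the first paragraph.

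The main obstacle is the lattice bookkeeping in the converse: controlling the order of $\alpha$ through the divisibilities of $e$, $f$ and of the component of $f$ in $K$ (this is where the precise relation $n\mid k$ is forced), arranging $\divi(e)=1$ so that $e^\perp/\langle e\rangle$ is genuinely the $K3$ lattice and not a strict overlattice, and — most delicately — ensuring that the resulting moduli space $M_{v_B}(S,\alpha)$ is non-empty and of $K3^{[n]}$-type, so that the Hodge isometry produced by Torelli is realised by an actual isomorphism of manifolds. The first two points are settled by Theorem \ref{thm:eichstrong} together with discriminant-form computations, while the last relies on the non-emptiness and deformation-type statements for moduli of twisted sheaves in \cite{Yo} (resp.\ \cite{myy}).
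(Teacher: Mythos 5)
Your proposal is correct in outline, but note first that the paper deliberately does not prove this proposition: it says explicitly ``we do not give a proof of this result'' and only sketches the two ingredients. Against that sketch, your forward direction is the same argument, except for one concrete slip: you assert that $(n,nB,nB^2/2)$ is an algebraic class, but this need not be an \emph{integral} class --- even when $nB\in H^2(S,\mathbb{Z})$, the term $nB^2/2=(nB)^2/2n$ can fail to be an integer (e.g.\ $B=b/n$ with $b^2$ not divisible by $2n$). The correct generator, as in the paper's Subsection on twisted sheaves, is $(k,kB,kB^2/2)$ with $k$ the \emph{smallest} integer making this class integral; one then checks $n\mid k$, and the pair $(0,0,1),(k,kB,kB^2/2)$ spans $U(k)$ exactly as in your pairing computation. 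This is precisely why the statement asks for $U(k)$ with $k$ a multiple of $n$ rather than for $U(n)$ itself, so the slip is fixable and your multiplication trick $\langle e,(n/d)f\rangle\cong U(n)$ remains valid --- but as written your claim of $U(n)$ is false in general (it holds in the special situation $B^2=BH=0$ used later in the paper, not for arbitrary $B$-lifts).

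In the converse you take a genuinely different route. The paper simply invokes the surjectivity of the twisted period map \cite[Prop.~2.8]{H} to produce $(S,\beta)$ with $\Pic(S,\beta)\cong\Lambda_{K3}^{1,1}$ and $P\cong\Pic(S)$, and then concludes by global Torelli together with the fact that all manifolds Hodge isometric to moduli spaces of twisted sheaves are again of that form. You instead re-derive the needed case of twisted period surjectivity by hand: choose a primitive isotropic $e$, form $K=e^{\perp}/\langle e\rangle$, apply untwisted period surjectivity, and build the $B$-field from $f$. This is viable and more self-contained, but be aware that all the ``lattice bookkeeping'' you defer --- primitivity of $e$ in $\Lambda_{K3}$ rather than merely in $U(k)$, controlling the order of the resulting Brauer class via $n\mid k$, and integrality and primitivity of $v_B$ --- is exactly the content of the cited surjectivity statement, so your proof is not complete at precisely the point where the paper's citation closes it; carried out in full it would be a proof of that proposition in the case at hand. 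Two smaller remarks: your appeal to Theorem \ref{thm:eichstrong} to arrange $\divi(e)=1$ is unnecessary, since $\Lambda_{K3}=U^4\oplus E_8(-1)^2$ is unimodular and every primitive vector automatically has divisibility one (and in any case one cannot move $e$ by an ambient isometry while keeping the Hodge structure fixed --- one must transport the whole structure, which is harmless but should be said); and your Torelli endgame, passing through non-emptiness and deformation type via \cite{Yo} (resp.\ \cite{myy}) and wall-crossing to turn a birational map into an isomorphism, is if anything spelled out more carefully than the paper's one-line version of the same step.
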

We do not give a proof of this result, let us just sketch the main ingredients. On one hand, if we start with a $K3$  (resp. abelian) surface $S$ and take a $n$ torsion Brauer class $\beta$, as in the previous section we have that $\Pic(S,\beta)$ contains primitively $\Pic(S)$ and contains $\langle (0,0,1),(k,kB,kB^2/2)\rangle\cong U(k)$ for a choice of $B$-lift $B$ of $\beta$. When we saturate this lattice, we obtain the algebraic part of the Mukai lattice of any moduli space of twisted sheaves on $(S,\beta)$, which therefore contains $U(k)$.
  On the other hand, if we start with $X$ and $\Lambda_{K3}^{1,1}$ (resp. $\Lambda_{Ab}^{1,1}$) and we call $P$ the orthogonal to the copy of $U(k)$ which we found, there exists a $K3$ surface $S$ and a Brauer class $\beta$ on it such that $P\cong \Pic(S)$ and $\Pic(S,\beta)\cong \Lambda_{K3}^{1,1}$ (resp. $\Lambda_{Ab}^{1,1}$) is the twisted Picard lattice of $(S,\beta)$. It follows that $X$ has the period of a moduli space of twisted sheaves on $S$. But now the result follows by the global Torelli, because all manifolds Hodge isometric to moduli spaces of twisted sheaves are of the same form.\\
If $S$ is a surface, $G$ is a group of automorphisms on it, $v_B$ is a $G$-invariant twisted Mukai vector and $\beta$ is a $G$-invariant Brauer class, the moduli space $M_{v_B}(S,\beta)$ has a natural $G$-action which we call induced (or twisted induced, if the need arises). Indeed, we have the following
\begin{lem}
Let $S$ be a symplectic surface and $G\subset \Aut(S)$ be any group. Let $\mathcal{M}$ be a component of a moduli space of (twisted) sheaves which is fixed by $G$. Then $G\subset \Aut(\mathcal{M})$ unless $\mathcal{M}$ is a point.
\end{lem}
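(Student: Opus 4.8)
The plan is to exhibit the natural map $G\to\Aut(\mathcal M)$ explicitly and then prove it is injective by realizing the second cohomology of $\mathcal M$ inside the Mukai lattice of $S$. Write $\mathcal M=M_{v_B}(S,\beta)$, so that the hypothesis ``$\mathcal M$ is $G$-fixed'' means exactly that the twisted Mukai vector $v_B$ and the Brauer class $\beta$ are $G$-invariant, and ``$\mathcal M$ is a point'' means $v_B^2=-2$, while ``$\mathcal M$ is positive dimensional'' means $v_B^2\geq 0$.

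First I would construct the homomorphism. For each $g\in G$, the autoequivalence $E\mapsto (g^{-1})^*E$ of $\Coh(S,\beta)$ is well defined because $g(\beta)=\beta$; it preserves stability, the symplectic form, and the Mukai vector $v_B$ (by $G$-invariance of $v_B$), so it carries $\beta$-twisted stable sheaves with Mukai vector $v_B$ to sheaves of the same type. This yields a biregular automorphism $\hat g$ of $\mathcal M$, and functoriality of $(g^{-1})^*$ gives $\widehat{gh}=\hat g\,\hat h$. Hence $\rho\colon G\to\Aut(\mathcal M)$, $g\mapsto\hat g$, is a group homomorphism, and it only remains to prove injectivity when $\dim\mathcal M>0$.

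For the injectivity, suppose $\rho(g)=\hat g=\id_{\mathcal M}$; then in particular $\hat g^{*}=\id$ on $H^2(\mathcal M,\mathbb Z)$. Here I would use the isometry $H^2(\mathcal M,\mathbb Z)\cong v_B^\perp$ recalled in Subsection \ref{ssec:twist} (valid for $v_B^2\geq 2$, resp.\ $\geq 6$ in the abelian case), together with the key compatibility coming from a $g$-equivariant (quasi-)universal family: under this isometry $\hat g^{*}$ is precisely the restriction to $v_B^\perp$ of the cohomological action of $g$ on the twisted Mukai lattice $\widetilde H(S,\beta)$. This compatibility is the heart of the matter and is exactly the twisted analogue of \cite[Thm.~4.4 and 4.5]{MW} and \cite{A}. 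Triviality of $\hat g^{*}$ thus forces $g$ to act as the identity on $v_B^\perp$; since $g$ also fixes $v_B$ and $\langle v_B\rangle\oplus v_B^\perp$ has finite index in $\widetilde H(S,\beta)$, the action of $g$ on the whole Mukai lattice is trivial, and in particular $g$ acts as the identity on $H^2(S,\mathbb Z)$. (The borderline case $v_B^2=0$, where $\mathcal M$ is again a surface, is handled analogously: the induced twisted-derived equivalence forces $g$ to act trivially on the transcendental lattice of $S$.)

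The last step is to conclude $g=\id$ from triviality of the cohomological action, and this is where I expect the main obstacle. For a $K3$ surface the representation $\Aut(S)\to O(H^2(S,\mathbb Z))$ is faithful by the strong Torelli theorem, so $g=\id$ and $\rho$ is injective. In the abelian case, however, $\Aut(A)$ does \emph{not} act faithfully on $H^{\mathrm{even}}(A)$ --- the involution $-1$ acts trivially there --- so the argument on $\widetilde H(S)$ alone is insufficient. To repair this I would pass to the full second cohomology of the Albanese fibre (a generalized Kummer), which carries an extra exceptional class not contained in $v_B^\perp\subset H^{\mathrm{even}}(A)$, and verify that $-1$, and more generally any nontrivial $g$, acts nontrivially on this enlarged lattice; equivalently one checks directly that the induced automorphism of the generalized Kummer is nontrivial. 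The excluded case $v_B^2=-2$ is exactly the situation where $\mathcal M$ is a point and no faithfulness can hold, which is why it must be set aside.
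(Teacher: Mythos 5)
Your main line of argument is the paper's own, just written out in more detail: the paper's entire proof is the single observation that there is an embedding $\widetilde{H}(S,\beta)\rightarrow \Lambda(\mathcal{M})$ of the twisted Mukai lattice into the Mukai lattice attached to $\mathcal{M}$, that this embedding is $G$-equivariant by \cite[Lemma 2.34]{MW}, and that faithfulness follows. Your construction of the homomorphism $G\to\Aut(\mathcal{M})$, the use of the isometry $H^2(\mathcal{M},\mathbb{Z})\cong v_B^\perp$ together with the twisted analogue of \cite{MW} and \cite{A}, the finite-index step from $\langle v_B\rangle\oplus v_B^\perp$ to the whole lattice, and the appeal to Torelli for $K3$'s fill in exactly this chain, and for $S$ a $K3$ surface your proof is correct and coincides with the paper's.

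The genuine gap is in your abelian branch, which rests on a false lattice-theoretic claim. First, there is no ``extra exceptional class'': for moduli of Kummer type with $v_B^2\geq 6$ one has $H^2$ of the Albanese fibre isometric to $v_B^\perp$ inside $H^{\mathrm{even}}(A,\mathbb{Z})\cong U^{\oplus 4}$, of rank $7$ -- the extra class $\delta$ is a $K3^{[n]}$-type phenomenon with no Kummer counterpart. Second, and worse, the assertion that ``$-1$, and more generally any nontrivial $g$, acts nontrivially on this enlarged lattice'' is false: $-\id_A$ and translations by torsion points induce automorphisms of generalized Kummer manifolds that act \emph{trivially} on all of $H^2$ (this failure of faithfulness of $\Aut\to O(H^2)$ for Kummer type is precisely why the classification in \cite{MTW} needs extra bookkeeping). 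So no cohomological enlargement can detect $-\id_A$; the only viable route is your fallback sentence -- check directly, e.g.\ via the equivariant Albanese map to $A\times\hat{A}$, that the induced automorphism of $\mathcal{M}$ is nontrivial -- which you state but do not carry out. Relatedly, you excise too little: the paper's proof excludes not only $\mathcal{M}$ a point but also the case where $\mathcal{M}$ fibres to a point under the Albanese map, and necessarily so, since for $\mathcal{M}\cong\hat{A}=\Pic^0(A)$ the translations of $A$ act trivially on $\mathcal{M}$ and the conclusion genuinely fails; your $v_B^2=0$ parenthetical does not cover this, and its reasoning is anyway insufficient even for $K3$'s, because triviality on the transcendental lattice does not force $g=\id$ (symplectic automorphisms act trivially on $T(S)$) -- there one should instead run the full $v_B^\perp$-equivariance argument, using that a finite-order isometry fixing $v_B$ and acting trivially on $v_B^\perp/\mathbb{Z}v_B$ is trivial on $v_B^\perp$.
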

\begin{proof}
Unless $\mathcal{M}$ is a point (or $\mathcal{M}$ fibres to a point with the Albanese map), there is an embedding $\widetilde{H}(S,\beta)\rightarrow \Lambda(\mathcal{M})$, where $\beta$ is a Brauer class ($G$-invariant by hypothesis) and $\Lambda(\mathcal{M})$ denotes the Mukai lattice attached to $\mathcal{M}$. This embedding is $G$-equivariant, as proven in \cite[Lemma 2.34]{MW}, therefore the action of $G$ on $\mathcal{M}$ is faithful.
\end{proof}

 If an automorphism group looks like an induced one, we will call it numerically induced. More precisely we have the following:
\begin{defn}
Let $X$ be a manifold of \kntiposp or of Kummer type. Let $G$ be a group of automorphisms of $X$ such that the following are satisfied:
\begin{itemize}
\item The group $G$ is finite.
\item The action of $G$ induced on the discriminant group $H^2(X,\mathbb{Z})^\vee/H^2(X,\mathbb{Z})$ is trivial.
\item Let $H^2(X,\mathbb{Z})\rightarrow \Lambda_{K3}$ (resp. $\Lambda_{Ab}$) be a primitive embedding and let us extend the action of $G$ on $\Lambda_{K3}$ (resp. $\Lambda_{Ab}$) trivially on the orthogonal complement of the image of the previous embedding. Then the $G$ invariant part of $\Lambda_{K3}$ (resp. $\Lambda_{Ab}$) contains primitively a lattice isometric to $U(k)$.  
\end{itemize}
Then we call the group $G$ a numerically induced group.
\end{defn} 
\begin{thm}\label{thm:twistinduced}
Let $X$ be a projective manifold of \kntiposp and let $G\subset \Aut(X)$ be a numerically induced group of automorphisms. Then there exist a $K3$ surface $S$, a Brauer class $\beta$ such that $G\subset \Aut(S,\beta)$, and a $G$-invariant twisted Mukai vector $v_B$ such that $X=M_{v_B}(S,\beta)$ and $G\subset \Aut(X)$ is induced from $G\subset \Aut(S,\beta)$.
\end{thm}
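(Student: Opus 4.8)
The plan is to follow the strategy of \cite[Thm.~4.4 and 4.5]{MW}, upgrading it from untwisted to twisted moduli spaces, in three stages: realise $X$ as a moduli space of twisted sheaves, descend the $G$-action to the underlying $K3$ surface, and finally identify the resulting induced action with the given one through the Torelli theorem. First I would extend the $G$-action from $i(H^2(X,\Z))$ to all of $\Lambda_{K3}$ by letting it act trivially on the orthogonal complement $\langle v\rangle$, with $v^2=2n-2$; the hypothesis that $G$ acts trivially on the discriminant group is precisely what makes this extension a well-defined isometry of $\Lambda_{K3}$, and since $v\in\Lambda_{K3}^{1,1}$ it acts by Hodge isometries. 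The numerically induced hypothesis now furnishes a primitive $G$-invariant copy of $U(k)\subset\Lambda_{K3}$, and I would check that it lies in $\Lambda_{K3}^{1,1}$: if $G$ contains a non-symplectic automorphism $g$, then any $G$-fixed class $w$ satisfies $(w,\sigma_X)=(w,g^*\sigma_X)=\xi(w,\sigma_X)$ with $\xi\neq 1$, forcing $(w,\sigma_X)=0$, so $U(k)\subset\Lambda_{K3}^{1,1}$ (the purely symplectic case requires an extra lattice argument, discussed below). Huybrechts' Proposition then yields a $K3$ surface $S$, a Brauer class $\beta$, and a Hodge isometry $\Lambda_{K3}\cong\widetilde{H}(S,\beta)$ carrying $U(k)$ to $\langle(0,0,1),(k,kB,kB^2/2)\rangle$ and $\Lambda_{K3}^{1,1}$ to $\Pic(S,\beta)$, so that $X\cong M_{v_B}(S,\beta)$, where $v_B$ is the image of $v$.

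Next I would transport the $G$-action through this Hodge isometry to $\widetilde{H}(S,\beta)$. As $G$ fixes the copy of $U(k)$ pointwise, it fixes the class $(0,0,1)$, hence preserves $(0,0,1)^\perp/\langle(0,0,1)\rangle\cong H^2(S,\Z)$ and acts there by isometries; by the Remark, the twisted form $\sigma_{S,B}$ descends to $\sigma_S$, so these are Hodge isometries for the untwisted $K3$ structure. After the usual normalisation by reflections in $(-2)$-classes and a global sign, so that the ample cone is preserved, the strong Torelli theorem for $K3$ surfaces realises the action by automorphisms $G\subset\Aut(S)$. Since $G$ also fixes the $B$-field direction $\langle(k,kB,kB^2/2)\rangle$, the Remark gives $\varphi(\beta)=\beta$ for every $\varphi\in G$, that is $G\subset\Aut(S,\beta)$; and $v_B$, being the image of the $G$-fixed vector $v$, is automatically $G$-invariant.

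Finally, by the Lemma on induced automorphisms, $G$ acts on $M_{v_B}(S,\beta)$, and by \cite[Lemma 2.34]{MW} the embedding $\widetilde{H}(S,\beta)\hookrightarrow\Lambda(M_{v_B})$ is $G$-equivariant; hence this induced action realises on $H^2(X,\Z)$ exactly the transported isometry, which agrees with the original one. The global Torelli theorem for manifolds of \kntipo then identifies the induced $G$-action on $M_{v_B}(S,\beta)$ with the given $G\subset\Aut(X)$, which is the assertion.

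I expect the main obstacle to be the lattice-theoretic input of the first stage together with the Torelli descent of the second. Concretely, one must guarantee a copy of $U(k)$ of type $(1,1)$ of the correct scale also when $G$ is purely symplectic, where $G$ fixes $\sigma_X$ and the argument above does not apply; here I would invoke the projectivity of $X$ and Eichler's criterion (Theorem \ref{thm:eichstrong}) to control the embedding and the divisibilities. One must also verify carefully that the abstract Hodge isometries descend to genuine automorphisms of $S$ preserving both the ample cone and the Brauer class $\beta$, which is where the normalisations in the second stage need to be made precise.
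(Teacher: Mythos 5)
Your three-stage skeleton (twisted period surjectivity to realise $X$ as a moduli space $M_{v_B}(S,\beta)$, twisted Torelli to let $G$ act on the untwisted $H^2(S,\Z)$, then matching the induced action with the given one) is indeed the paper's strategy, but the decisive middle step in your proposal is wrong. You pass from Hodge isometries of $H^2(S,\Z)$ to automorphisms of $S$ ``after the usual normalisation by reflections in $(-2)$-classes and a global sign''. That normalisation is available for a \emph{single} effective Hodge isometry, but it fails for a group action: if each $g\in G$ is corrected to $w_g\circ g^*$ with $w_g$ in the Weyl group, the corrected maps need not satisfy $w_{gh}\circ (gh)^*=(w_g\circ g^*)\circ(w_h\circ h^*)$, so you no longer have a homomorphism $G\to\Aut(S)$; and even for one element, a corrected automorphism induces a \emph{different} action on the moduli space, which would no longer restrict to the given $G\subset\Aut(X)$ --- exactly the identification your third stage requires. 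What must be proved instead is that the \emph{uncorrected} $G$-action on $H^2(S,\Z)$ already preserves a \kahl (equivalently ample) class, so that strong Torelli applies to the whole group simultaneously and without modification. This is precisely the content of the paper's proof: it splits into the symplectic and purely non-symplectic cases (the general case via the symplectic subgroup of $G$) and shows that the covariant lattice $S_G$ meets $\Pic(S)$ in no class of square $-2$, using \cite{Beauville2} to locate $S_G(X)$ in $\Pic(X)$ (resp.\ $T_G(X)$ in $\Lambda^{1,1}_{K3}$), \cite{m_invol} in the symplectic case, and in both cases Markman's result \cite[Section 9]{mark_tor} that square $-2$ divisors on manifolds of \kntiposp have effective multiples and hence cannot be orthogonal to the invariant \kahl classes that exist on $X$; since $S_G(S)=S_G(X)$ and algebraic classes stay algebraic, an invariant ample class on $S$ is then found in $S_G(S)^{\perp}$.

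A secondary mismatch: your case split serves a different purpose (checking that the invariant copy of $U(k)$ lies in $\Lambda^{1,1}_{K3}$), and while your non-symplectic computation $(w,\sigma_X)=\xi(w,\sigma_X)$ is correct, your plan for the symplectic case --- ``invoke the projectivity of $X$ and Eichler's criterion'' --- is only a gesture: Theorem \ref{thm:eichstrong} controls embeddings up to isometry of the ambient lattice and says nothing about the Hodge type of a given invariant sublattice. The paper does not take this detour at all; it feeds the numerically induced hypothesis directly into the surjectivity of the twisted period map \cite[Prop. 2.8]{H} to produce $(S,\beta)$ with $\Pic(S,\beta)=\Lambda^{1,1}_{K3}(X)$, and reserves the symplectic/non-symplectic dichotomy for the invariant ample class, which is where it is genuinely needed. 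Your final stage (equivariance of $\widetilde{H}(S,\beta)\hookrightarrow\Lambda$ via \cite[Lemma 2.34]{MW} and global Torelli) is consistent with the paper, but it only goes through once the descent to $\Aut(S,\beta)$ is done without altering the isometries, so the gap above is the essential one to repair.
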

\begin{proof}
As $G$ is numerically induced, we have a well defined lattice $P:=U(k)^\perp\subset \Lambda_{K3}(X)$. By the surjectivity of the twisted period map \cite[Prop. 2.8]{H}, there exists a $K3$ surface $S$ and a Brauer class $\beta$ such that $\Lambda_{K3}^{1,1}(X)=\Pic(S,\beta)$. We want to prove that $(S,\beta)$ has a natural $G$-action: by the global twisted Torelli theorem for $K3$'s \cite[Cor. 5.4]{H2}, $G$ acts as an isometry group of the untwisted $H^2(S,\mathbb{Z})$. Therefore, it suffices to find a $G$-invariant \kahl class on $S$. We split the proof in two cases, the first is when $G$ is symplectic and the second is the opposite case (purely non symplectic). The general case can be treated by looking first at the symplectic subgroup of $G$.\\
Let $G$ be symplectic. As $G$ is numerically induced, the covariant part $S_G(X)$ of the $G$ action lies inside $H^2(X,\ZZ)$. As $G$ is symplectic, $S_G(X)$ is contained in $\Pic(X)$, see \cite{Beauville2}. As proven in \cite{m_invol}, $S_G(X)$ does not contain any class of square $-2$, as by \cite[Section 9]{mark_tor}, divisors of square $-2$ always have an effective multiple, thus $G$ invariant \kahl classes of $X$ cannot be orthogonal to them. Finally, as $P^\perp$ is $G$ invariant, $S_G(X)$ lies also in $\Pic(S)$, hence $S$ has a $G$ invariant \kahl class in $S_G(X)^\perp$.  
Let now $G$ be non symplectic. This means, again by \cite{Beauville2}, that the invariant lattice $T_G(X)$ is contained in $\Lambda_{K3}^{1,1}(X)$. Therefore, the $G$ invariant part of the action on $H^2(S,\ZZ)$ is contained in $\Pic(S)$ and we have $S_G(S)=S_G(X)$. We want to prove that there are ample invariant classes in $\Pic(S)$, which is equivalent to prove that there are no $-2$ classes in $\Pic(S)\cap S_G(S)$. Suppose on the contrary that $D\in \Pic(S)\cap S_G(S)$ and $D^2=-2.$ As $S_G(S)=S_G(X)$, we have that $D\in S_G(X)\cap \Pic(X)$ (algebraic classes are kept algebraic). However, this implies that $D$ has an effective multiple by \cite[Section 9]{mark_tor}, therefore $T_G(X)$ cannot contain ample classes, which is absurd.
\end{proof}
The above theorem and its proof hold, word by word, also for generalized Kummer's. Actually, in that case it turns out to be even simpler, as any positive divisor on an abelian surface is either ample or anti ample, without having to check orthogonality to any negative class. If we apply it to the classification of automorphisms of generalized Kummer manifolds contained in \cite{MTW}, we have the following:
\begin{cor}
Let $X$ be a fourfold of Kummer type and let $\mathbb{Z}_{/p\Z}\subset \Aut(X)$ be a non symplectic group of automorphism with $p$ prime. Then it is induced, unless $p=7$, or $p=2,3$ and the invariant lattice on $X$ is one dimensional, or $p=2$ and the action is non trivial on the discriminant group.
\end{cor}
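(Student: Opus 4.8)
The plan is to deduce the statement from the generalized-Kummer version of Theorem~\ref{thm:twistinduced}, which asserts that a finite group acting on a Kummer-type fourfold $X$ is induced as soon as it is \emph{numerically} induced. Thus it suffices, for $G=\mathbb{Z}/p\mathbb{Z}$ non-symplectic, to check the two substantial conditions of that notion outside the listed cases and to see that each fails in the exceptional cases. First I would fix the lattice data: for $X$ of Kummer type one has $H^2(X,\ZZ)\cong U^3\oplus\langle -6\rangle$, so $A_{H^2(X,\ZZ)}\cong\ZZ/6\ZZ$, and the primitive embedding into the Mukai lattice $\Lambda_{Ab}=U^4$ has orthogonal complement $\langle v\rangle\cong\langle 6\rangle$, a positive rank-one lattice on which $G$ acts trivially.

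For the discriminant condition I would argue that, since $\Aut(\ZZ/6\ZZ)\cong\ZZ/2\ZZ$ has no element of odd order, the action of $G$ on $A_{H^2(X,\ZZ)}$ is automatically trivial whenever $p$ is odd; only for $p=2$ can it be non-trivial. In that case the trivial extension to $\langle v\rangle$ is incompatible with the anti-diagonal gluing $\ZZ/6\ZZ\hookrightarrow A_{H^2(X,\ZZ)}\oplus A_{\langle v\rangle}$ that realizes the unimodular overlattice $U^4$, so $G$ does not extend to an isometry of $U^4$ fixing $\langle v\rangle$ and is therefore neither numerically induced nor induced; this is exactly the ``$p=2$, non-trivial discriminant'' exception.

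The $U(k)$-condition I would control through signatures. As $G$ is non-symplectic, the real plane $\langle \mathrm{Re}\,\sigma_X,\mathrm{Im}\,\sigma_X\rangle$ of signature $(2,0)$ lies in the co-invariant lattice, so $T_G(X):=H^2(X,\ZZ)^G$ is algebraic and, containing an invariant polarization, has signature $(1,\rk T_G(X)-1)$. Adjoining the invariant class $v$, the invariant part $(U^4)^G$ is a finite-index overlattice of $T_G(X)\oplus\langle v\rangle$ and hence has signature $(2,\rk T_G(X)-1)$. It is therefore indefinite exactly when $\rk T_G(X)\ge 2$, and positive definite of rank two (so containing no $U(k)$) when $\rk T_G(X)=1$. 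Since $\sigma_X\in S_G(X)\otimes\C$ forces $\phi(p)\mid \rk S_G(X)\le 6$, the only primes are $p\in\{2,3,5,7\}$; for $p=7$ one necessarily has $\rk S_G(X)=6$ and $\rk T_G(X)=1$, which is the $p=7$ exception, while for $p=2,3$ the value $\rk T_G(X)=1$ occurs for special actions, giving the remaining exceptions. In every other case $\rk T_G(X)\ge2$, so $(U^4)^G$ is an even indefinite lattice of rank $\ge3$ containing a primitive $U(k)$, and $G$ is numerically induced, hence induced.

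The main obstacle is the final claim that a primitive $U(k)$ really embeds in $(U^4)^G$ throughout the non-exceptional range, rather than merely that this lattice is indefinite: producing the hyperbolic plane requires a primitive isotropic vector of controlled divisibility, which I would obtain by running through the finite list of invariant and co-invariant lattices of \cite{MTW} and applying Eichler's criterion (Theorem~\ref{thm:eichstrong}) or an explicit gluing computation. The delicate instances are $p=5$ with $\rk T_G(X)=3$ and the low-rank $p=2,3$ lattices, where the existence of the embedding, and not just the signature, must be verified by hand.
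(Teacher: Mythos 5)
Your proposal is correct and follows essentially the same route as the paper: both deduce the corollary from the generalized-Kummer version of Theorem \ref{thm:twistinduced}, exclude the cases of non-trivial action on the discriminant group (possible only for $p=2$, since $\Aut(\mathbb{Z}/6\mathbb{Z})\cong\mathbb{Z}/2\mathbb{Z}$ has no elements of odd order), and settle the remaining positive verification that the invariant part of $U^4$ contains a $U(k)$ by inspecting the lists of invariant lattices in \cite[Section 2]{MTW}. Your extra signature and cyclotomic bookkeeping --- pinning down $p\in\{2,3,5,7\}$ via $p-1\mid \rk S_G(X)\leq 6$ and showing that a rank-one invariant lattice forces a positive-definite rank-two invariant part of the Mukai lattice, hence no $U(k)$ --- is a correct refinement that explains structurally where the exceptions come from, something the paper leaves implicit in the list inspection.
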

\begin{proof}
When the action is non trivial on the discriminant group, the automorphisms cannot be numerically induced, so these cases (which occur only for $p=2$) have to be excluded. The rest of the proof requires only to look at the lists of invariant lattices in \cite[Section 2]{MTW} and see which of them contains $U(k)$ for some $k$.
\end{proof}

\section{Moduli spaces of IHS's with involutions}\label{moduli}

In order to formulate our results we need to recall some results on moduli spaces.
We want to study moduli spaces of fourfolds of $K3^{[2]}$-type with a non symplectic involution of prescribed type, either with invariant sublattice $T=U(2)\oplus E_8(-2)$, $T=U\oplus E_8(-2)$ or $T=U(2)\oplus D_4(-1)$.

Let $L:=U^{\oplus 3}\oplus E_8(-1)^{\oplus 2}\oplus\langle -2\rangle$ be the Beauville--Bogomolov--Fujiki lattice.
Let us recall what is known about these moduli spaces, by the work of Joumaah \cite[\S 6]{Joumaah}. Let $\mathcal{M}_L$ be a connected component of the moduli space of marked pairs of fourfolds of $K3^{[2]}$-type and let $\mathcal{P}$ be the period map from $\mathcal{M}_L$ to the period domain
\[
D_L:=\left\lbrace x\in \PP(L\otimes \C)|\ q(x)=0, (x,\overline{x})>0\right\rbrace.
\]
Let $(r,a,\delta)$ be the triple of invariants associated to the $2$-elementary lattice $T$: it is $(10,10,0)$ for $T=U(2)\oplus E_8(-2)$, $(10,8,0)$ for $T=U\oplus E_8(-2)$ and $(6,4,0)$ for $T=U(2)\oplus D_4(-1)$. In the sequel $T$ will denote one of these three hyperbolic $2$-elementary sublattices. We fix a primitive embedding $j:T\subset L$, identifying $T$ with its image inside $L$; let $Z$ be its orthogonal in $L$ (usually denoted by $S$ in the literature on automorphisms) and consider the involution $\rho\in \Mon^2(L)$ such that $L^{\rho}=T$, i.e. the extension to $L$ of $\id_T\oplus(-\id_Z)$.

Given $X$ a fourfold of $K3^{[2]}$-type and $\iota\in\Aut(X)$ a non symplectic involution acting on it, Joumaah says that the pair $(X,\iota)$ is \emph{of type $T$} if there exists a marking (also said a $(\rho,T)$\emph{-polarization}) $\eta: H^2(X,\mathbb{Z})\rightarrow L$ such that $\iota^\ast=\eta^{-1}\circ  \rho\circ\eta$. Two pairs $(X_1,\iota_1)$ and $(X_2,\iota_2)$ of type $T$ are isomorphic if there exists an isomorphism $f:X_1\rightarrow X_2$ such that $\iota_2\circ f= f\circ \iota_1$; monodromy operators corresponding to these isomorphisms of pairs are exactly those isometries $g\in\Mon^2(L)$ such that $g\circ \rho=\rho\circ g$ - we denote by $\Mon^2(L,T)$ this subgroup. All these monodromy operators leave $T$ invariant and hence there are well-defined restriction maps $\Mon^2(L,T)\rightarrow O(Z)$ and $\Mon^2(L,T)\rightarrow O(T)$; we denote respectively by $\Gamma_Z$ and $\Gamma_T$ their images. Local deformations of a pair $(X,\iota)$ of type $T$ are parametrized by $H^{1,1}(X)^{\iota}$ (see \
cite[Theorem 2]{B} and \cite[\S 4]{BCS} for more details).

\begin{rem}\label{rem:diff-isom}
The notion of $(\rho, T)$-polarization that we adopt here following \cite{Joumaah} is slightly weaker than the one of \cite{Dol-Kondo} and of \cite{BCS2}; in particular, the two pairs are not necessarily isomorphic as ample lattice polarized pairs.
\end{rem}

One can consider the subspace $\mathcal{M}_{T}$ of $(\rho,T)$-polarized marked pairs $(X,\eta)\in\mathcal{M}_L$. Clearly, for any $(X,\eta)\in\mathcal{M}_T$, we have $$[\eta(H^{2,0}(X))]\in D_Z:=D_L\cap \mathbb{P}(Z\otimes \mathbb{C}).$$ 

In fact, the period map restricts to a holomorphic surjective map 
$$\mathcal{P}: \mathcal{M}_{T}\longrightarrow D_Z^0:= D_Z\setminus \bigcup_{\delta\in\Delta(Z)}(\delta^{\perp}\cap D_Z),$$ where $\Delta(Z)$ is the set of elements of $Z$ either of square $-2$ or of square $-10$ with divisibility $2$ in $L$.

The construction could a priori depend on the choices made, first of all by the $\Mon^2(L,T)$-orbit of the fixed embedding $T\subset L$. The following statement shows that this is not the case here.

\begin{lem}\label{lem:connected}
Let $T$ be one of the three lattices $U(2)\oplus E_8(-2)$, $U\oplus E_8(-2)$ or $U(2)\oplus D_4(-1)$. Then all embeddings of $T$ in $L$ are monodromy equivalent.
\end{lem}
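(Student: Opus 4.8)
I plan to prove the statement in two stages: first that the primitive sublattices of $L$ isometric to $T$ form a single $O(L)$-orbit, and then that this orbit does not split when one passes to the monodromy group. Recall that for manifolds of $K3^{[2]}$-type one has $\Mon^2(L)=O^+(L)$, the subgroup of isometries preserving the orientation of the positive cone (see \cite{mark_tor}); here the usual condition that an isometry act as $\pm\id$ on $A_L\cong\ZZ/2\ZZ$ is automatic, since $\pm\id$ coincide on $\ZZ/2\ZZ$. Thus it suffices to show that any two primitive embeddings of $T$ in $L$ differ by an element of $O^+(L)$.

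For the $O(L)$-uniqueness I would first identify the orthogonal complement $Z=T^\perp\subset L$. Since $L$ and $T$ are both $2$-elementary, so is $Z$, and its signature is forced to be $(2,\rk L-\rk T-2)$. To determine the remaining invariants $a_Z,\delta_Z$ I would write down one explicit embedding in each case, using the primitive diagonal embeddings $U(2)\oplus U(2)\hookrightarrow U^2$ and $E_8(-2)\hookrightarrow E_8(-1)^2$ (with anti-diagonal complements) together with $D_4(-1)\hookrightarrow E_8(-1)$; this yields $Z\cong U\oplus U(2)\oplus E_8(-2)\oplus\langle-2\rangle$ for $T=U(2)\oplus E_8(-2)$, then $Z\cong U^2\oplus E_8(-2)\oplus\langle-2\rangle$ for $T=U\oplus E_8(-2)$, and $Z\cong U\oplus U(2)\oplus E_8(-1)\oplus D_4(-1)\oplus\langle-2\rangle$ for $T=U(2)\oplus D_4(-1)$. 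In each case $a_Z=a_T+1$ and $\delta_Z=1$. The essential point is that these invariants are \emph{forced}: the discriminant-gluing relation $q_L\cong(q_T\oplus q_Z)|_{H^\perp}/H$ together with $A_L\cong\ZZ/2\ZZ$ and $\delta_T=0$ shows that the half-integral value of $q_L$ cannot come from the integer-valued form $q_T$, hence must come from $Z$ (giving $\delta_Z=1$), and that the glue subgroup $H$ must be all of $A_T$ (giving $a_Z=a_T+1$ and excluding the a priori possibility $a_Z=a_T-1$). By the Nikulin classification of $2$-elementary lattices (Theorem \ref{thm:nik_2el}), $Z$ is then unique up to isometry.

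With the isometry class of $Z$ fixed, I would invoke Nikulin's description of primitive embeddings \cite{N2}: the $O(L)$-classes of primitive embeddings of $T$ with orthogonal complement $Z$ correspond to orbits of admissible gluings $H\subset A_T\oplus A_Z$ under the images of $O(T)$ and $O(Z)$ in $O(A_T)$ and $O(A_Z)$. Since $A_L\cong\ZZ/2\ZZ$ forces $H$ to identify all of $A_T$ with an index-two subgroup of $A_Z$, and since $Z$ satisfies $\rk Z\ge\ell(A_Z)+2$ (so that $O(Z)\twoheadrightarrow O(A_Z)$ by Nikulin), one checks that these gluings form a single orbit. Hence all primitive embeddings of $T$ in $L$ are equivalent under $O(L)$.

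Finally I would correct the orientation. Given $g\in O(L)$ carrying one embedding to another, if $g$ is orientation-reversing I replace it by $\psi g$, where $\psi$ is an orientation-reversing isometry fixing the target copy of $T$ pointwise; then $\psi g\in O^+(L)=\Mon^2(L)$ still matches the two embeddings. Such a $\psi$ exists because each complement $Z$ above contains a unimodular hyperbolic summand $U$: taking $-\id$ on that $U$ and the identity on the other summands gives an isometry of $Z$ acting trivially on $A_Z$, which therefore extends by $\id_T$ to an isometry of $L$ and reverses exactly one direction of the positive-definite plane of $Z$, hence reverses the orientation of the positive cone of $L$. I expect the main obstacle to be the discriminant-form bookkeeping of the second paragraph — proving that $(a_Z,\delta_Z)=(a_T+1,1)$ is forced and ruling out $a_Z=a_T-1$ — together with the single-orbit check for the gluings in the third paragraph; the reduction and the orientation correction are formal.
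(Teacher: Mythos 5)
Your proposal is correct in substance but takes a genuinely different route from the paper's, which is essentially a two-line reduction plus a citation: the paper invokes Markman to identify $\Mon^2(L)$ with the orientation-preserving isometries (the $\pm\id$ condition on $A_L\cong\ZZ/2\ZZ$ being vacuous, as you also note), observes that composing with $-\id_L$ makes monodromy equivalence the same as equivalence under arbitrary isometries, and then cites \cite[Proposition 8.2/Remark 8.3]{BCS} for the fact that all primitive embeddings of these three $T$ into $L$ are isometry-equivalent. What you do is reconstruct that cited proposition from scratch via Nikulin's machinery: your explicit complements $Z$ and their invariants are correct in all three cases, the forcing argument ruling out $a_Z=a_T-1$ (the induced form on $H^\perp/H$ would be integer-valued, contradicting $q_L$ taking the value $-1/2$) is sound, uniqueness of $Z$ then follows from Theorem \ref{thm:nik_2el} since $Z$ is indefinite, and the surjectivity $O(Z)\rightarrow O(q_Z)$ holds because $\rk Z\geq \ell(A_Z)+2$ in each case. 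Your orientation correction also differs from, and is slightly sharper than, the paper's: the trick with $-\id_L$ only works at the level of sublattices and of the conjugated involutions $\rho$ (note $-g\circ j_1=-j_2\neq j_2$ as maps), whereas your $\psi$, built from $-\id$ on a hyperbolic summand $U\subset Z$, fixes the target copy of $T$ pointwise and so matches the embeddings themselves. What the paper's approach buys is brevity; what yours buys is a self-contained proof that in addition makes the isometry class of the complement explicit, which is independently useful.

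Two steps deserve more care. First, your parenthetical claim that $Z$ is $2$-elementary because $T$ and $L$ are is not automatic: group-theoretically $A_Z$ is an extension of $A_L\cong \ZZ/2\ZZ$ by $p_Z(H)\cong(\ZZ/2\ZZ)^{a_T}$, so a $\ZZ/4\ZZ$ summand is a priori possible. It is excluded by the same $\delta$-bookkeeping you already use: an order-four $z\in A_Z$ lifts to an order-four element of $H^\perp$, which cannot lie in the $2$-elementary group $H$ and hence maps to the generator of $H^\perp/H\cong A_L$, forcing $q_Z(z)\equiv 1/2 \bmod \ZZ$ since $\delta_T=0$; but nondegeneracy of $q_Z$ forces $b(z,z)$, i.e. $q_Z(z)\bmod\ZZ$, to have order four in $\Q/\ZZ$, a contradiction. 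Second, the "one checks" that the admissible gluings form a single orbit is exactly the nontrivial content of \cite[Proposition 8.2]{BCS}; it does go through for these $2$-elementary discriminant forms, but it is the step carrying the weight and should be written out (or cited) rather than asserted.
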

\begin{proof}
We need to prove that two different embeddings are conjugate by a monodromy operator in $L$ which, by the work of Markman \cite{mark_tor}, means that two different embeddings must be conjugate by an orientation preserving isometry of $L$. By composing with the isometry $-\id$ on $L$, this is also equivalent to being conjugate by any isometry. But the proof of this fact is precisely the content of \cite[Proposition 8.2/Remark 8.3]{BCS}. 
\end{proof}

In general, $\mathcal{P}$ is not a local isomorphism and two elements of $\mathcal{M}_{T}$, even in the same fiber of $\mathcal{P}$, are not necessarily deformation equivalent as pairs of type $T$; equivalence classes for deformation of pairs of type $T$ are determined by the choice of a $\Gamma_T$-orbit $\mathcal{O}$ of chambers $K$ inside $C_T\setminus\cup_{\delta\in\Delta(T)}\delta^{\perp}$, where $C_T$ is one connected component of the positive cone of $T\otimes\IR$ and $\Delta(T)$ is the set of elements of $T$ either of square $-2$ or of square $-10$ with divisibility $2$ in $L$.

\begin{lemm}\label{chamber-orbits}
 We have 
 $$\Delta(T)=
 \left\lbrace
 \begin{array}{ccc}
 \emptyset&\mathrm{if}&T=U(2)\oplus E_8(-2),\\
 \lbrace \delta\in T|q(\delta)=-2\rbrace&\mathrm{if}&T=U(2)\oplus D_4(-1)\ \mathrm{or}\ U\oplus E_8(-2).
 \end{array}
\right.
 $$
Moreover, in all cases, $\Gamma_T=\widetilde{O}(T):=\lbrace g\in O(T)| g_{|A_T}=\id_{A_T}\rbrace$ and all pairs of type $T$ are deformation equivalent for each $T$.
\end{lemm}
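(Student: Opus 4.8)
The three assertions of the lemma---the shape of $\Delta(T)$, the equality $\Gamma_T=\widetilde{O}(T)$, and the deformation equivalence of all pairs of type $T$---are best treated in this order, since the last one follows formally from the first two once one knows that a Weyl group acts transitively on its chambers. The plan is therefore to pin down $\Delta(T)$ by a congruence argument, identify $\Gamma_T$ by two inclusions, and then read off the deformation statement.

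\emph{The set $\Delta(T)$.} Everything is governed by a congruence modulo $4$. For any $\delta\in T$ one has $(\delta,T)\subseteq(\delta,L)$, so $\divi_L(\delta)=2$ forces $\divi_T(\delta)=2$; both divisibilities lie in $\{1,2\}$ because $A_T$ and $A_L$ are $2$-elementary. The main computation is that
\[
\divi_T(\delta)=2\ \Longrightarrow\ q(\delta)\equiv 0\pmod 4
\]
in each of the three lattices. Indeed, on $U(2)$ and on $E_8(-2)$ the form is divisible by $4$ throughout; on a $U$-summand $\divi_T(\delta)=2$ forces the $U$-component of $\delta$ to vanish; and on $D_4(-1)$ a vector $d$ of divisibility $2$ has $d/2$ in the dual representing a nonzero class, and the nonzero classes of the discriminant form of $D_4$ carry value $\equiv 1\pmod 2$, so $q(d)\equiv 4\pmod 8$; combined with the $U(2)$-component, whose square is a multiple of $4$, this yields $q(\delta)\equiv 0\pmod 4$. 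Granting this, for $T=U(2)\oplus E_8(-2)$ every square is $\equiv 0\pmod 4$, so neither $-2$ nor $-10$ occurs and $\Delta(T)=\emptyset$. For $T=U\oplus E_8(-2)$ and $T=U(2)\oplus D_4(-1)$ the hyperbolic, respectively $D_4(-1)$, summand produces classes of square $-2$, which automatically lie in $\Delta(T)$; and no class of square $-10$ can have divisibility $2$ in $L$, since that would force $\divi_T(\delta)=2$ and hence $4\mid q(\delta)$, contradicting $-10\equiv 2\pmod 4$. This gives $\Delta(T)=\{\delta\in T\mid q(\delta)=-2\}$ in these two cases.

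\emph{The group $\Gamma_T$.} Here I would argue by two inclusions, using Markman's identification $\Mon^2(L)=\widetilde{O}^+(L)$ and the fact that $\Mon^2(L,T)$ consists of the monodromies commuting with $\rho$, i.e. preserving both $T$ and $Z$. For $\widetilde{O}(T)\subseteq\Gamma_T$, given $g\in\widetilde{O}(T)$ I extend it by the identity on $Z$: since $g$ acts trivially on $A_T$ it preserves the glue $H\subset A_T\oplus A_Z$ and acts trivially on $A_L=H^{\perp}/H$, so $g\oplus\id_Z$ is an isometry of $L$ commuting with $\rho$; if it reverses the orientation of the positive three-space I correct it by an element of $\widetilde{O}(Z)$ of the opposite orientation, available because $Z$ is an indefinite $2$-elementary lattice of signature $(2,\rk Z-2)$. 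The reverse inclusion $\Gamma_T\subseteq\widetilde{O}(T)$ is the main obstacle: a priori a monodromy preserving $T$ and $Z$ could act nontrivially on $A_T$, since $A_L\cong\Z/2\Z$ has trivial isometry group and so imposes no discriminant constraint by itself. I would settle it using the explicit embedding $T\subset L$ (unique up to monodromy by Lemma~\ref{lem:connected}, and described in \cite{BCS}): the glue group $H$ identifies $A_T$ with $A_Z$ up to the order-two quotient $A_L$, and one checks on this explicit $H$ that the only action on $A_T$ compatible with an isometry of $Z$ and trivial on $A_L$ is the identity; alternatively this is exactly the computation of $\Gamma_T$ carried out in \cite[\S 6]{Joumaah} for lattices of this type.

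\emph{Deformation equivalence.} Finally, recall that deformation classes of pairs of type $T$ correspond to $\Gamma_T$-orbits of chambers of $C_T\setminus\bigcup_{\delta\in\Delta(T)}\delta^{\perp}$. When $\Delta(T)=\emptyset$ there is a single chamber, namely $C_T$ itself, and the claim is immediate. When $\Delta(T)=\{\delta\mid q(\delta)=-2\}$ the chambers are the Weyl chambers of the root system formed by these $(-2)$-classes; the reflections $s_\delta$ generate a group acting transitively on them, and each $s_\delta$ fixes $A_T$ pointwise (because $s_\delta(y)-y=(y,\delta)\delta\in T$ for every $y\in T^{\vee}$), hence lies in $\widetilde{O}(T)=\Gamma_T$. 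Thus in every case there is exactly one $\Gamma_T$-orbit of chambers, and all pairs of type $T$ are deformation equivalent.
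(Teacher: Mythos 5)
Your computation of $\Delta(T)$ and your treatment of deformation equivalence are correct, and both take a genuinely different route from the paper. For $\Delta(T)$ the paper fixes an explicit embedding $T\subset L$ (unique up to isometry, by \cite[Remark 8.3]{BCS}) and checks directly that every primitive element of $T$ has divisibility $1$ in $L$ when $T=U(2)\oplus D_4(-1)$ or $U\oplus E_8(-2)$, so no $-10$ class of divisibility $2$ can occur; your intrinsic argument ($\divi_L(\delta)=2\Rightarrow\divi_T(\delta)=2\Rightarrow 4\mid q(\delta)$, incompatible with $-10\equiv 2\pmod 4$) reaches the same conclusion without choosing an embedding, which is a genuine simplification --- modulo two harmless slips: divisibility $2$ forces the $U$-component into $2U$, not to vanish, and in the $D_4(-1)$ case $d/2$ may represent the zero class of $A_{D_4}$, in which case $q(d)\equiv 0\pmod 8$; the mod-$4$ conclusion survives both. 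For the last assertion, the paper argues geometrically that there is a single $\Gamma_T$-orbit of chambers, via the coincidence of the movable and ample cones (\cite{bht}, \cite{m_wall}) and Markman's description of the movable cone as a fundamental chamber for the reflections in $-2$ classes \cite[Section 6]{mark_tor}; your purely lattice-theoretic alternative --- the Weyl group of the $-2$ roots acts transitively on the chambers, and each $s_\delta$ lies in $\widetilde{O}(T)=\Gamma_T$ because $s_\delta(y)-y=(y,\delta)\delta\in T$ for $y\in T^\vee$ --- is valid and more elementary, but it leans entirely on the middle statement.

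That middle statement is where your proposal has a genuine gap. You correctly isolate the hard inclusion $\Gamma_T\subseteq\widetilde{O}(T)$, and correctly note that $A_L\cong\Z/2\Z$ imposes no constraint by itself, but you then replace the proof by ``one checks on this explicit $H$'' together with a pointer to \cite{Joumaah}. Joumaah's work supplies the framework (the definition of $\Gamma_T$, the chamber/deformation correspondence), not this computation for these particular lattices --- that computation is precisely the content of the lemma, so the citation is circular, and the unperformed check is nontrivial: for instance, nothing in your proposal rules out that the swap $e\leftrightarrow f$ on the $U(2)$ summand, which acts nontrivially on $A_T$, glues with some isometry of $Z$ to a monodromy of $L$ preserving $T$; since $O(Z)\rightarrow O(q_{A_Z})$ is surjective for an indefinite $2$-elementary lattice $Z$, any obstruction must be extracted from the glue itself. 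The paper closes this in two lines: the index formula $[L:(T\oplus Z)]^2=|A_T||A_Z|/|A_L|$ gives $|H|=|A_T|$ in all three cases, hence $p_T(H)=A_T$ because $p_T$ is injective, and the gluing criterion quoted from \cite[Lemma 3.2]{GHS} --- isometries $\phi\oplus\psi$ of $T\oplus Z$ extend to $L$ only if they act as the identity on the projections $p_T(H)$ and $p_Z(H)$ --- then forces every element of $\Gamma_T$ to be trivial on $A_T$. You should incorporate this counting-plus-criterion step; your reverse inclusion $\widetilde{O}(T)\subseteq\Gamma_T$ (extend by $\id_Z$, correct the spinor norm by an orientation-reversing element of $\widetilde{O}(Z)$) does match the paper's, which uses the concrete reflection $\rho_w$ in a vector $w\in U\subset Z$ with $w^2=2$.
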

\begin{proof}
If $T=U(2)\oplus E_8(-2)$, $\Delta(T)=\emptyset$ because any element in $T$ has square a multiple of four. 

If $T=U(2)\oplus D_4(-1)$, it admits only one primitive embedding in $L$ up to isometry (see \cite[Remark 8.3]{BCS}). Consider for example the embedding $j_1:U(2)\subset U\oplus U$ via $t_1\mapsto l_{1,1}+2l_{2,1}$, $t_2\mapsto l_{2,1}+2l_{2,2}$, where $t_1,t_2$ are generators of $U(2)$ in $T$ and $l_{i,j}$ are the standard basis of the $i$-th copy of $U$ for $i,j=1,2$; it is easy to see that $\mathrm{div}_Lt_1=\mathrm{div}_Lt_2=1$. Analogously, one can embed $D_4(-1)$ inside one of the $ E_8(-1)$ summands via a primitive embedding $j_2$, check that all its generators have divisibility one in $L$ and conclude by observing that a primitive embedding of $T$ in $L$ is given by $j_1\oplus j_2$ and that with such an embedding all primitive elements of $T$ have divisibility one inside $L$.

Analogously, for $T=U\oplus E_8(-2)$ it is enough to consider one primitive embedding given by $T\subset L_{K3}:=U^{\oplus 3}\oplus E_8(-1)^{\oplus 2}\subset L$ to conclude that all primitive elements of $T$ have divisibility $1$ inside $L$.

As for the second statement, consider the isotropic subgroup $H:=L/(T\oplus Z)$ of $A_T\oplus A_Z$ which is determined by the overlattice $L$; it is well-known that $\phi\in O(T)$ and $\psi\in O(Z)$ glue to an isometry of $L$ if and only if they act as the identity on the two projections $p_T(H)$ and $p_Z(H)$ of $H$ respectively in $A_T$ and $A_Z$ (see for example \cite[Lemma 3.2]{GHS}). In all three cases, $|H|=|A_T|$, hence $p_T(H)=A_T$, since $p_T$ is a monomorphism. This shows that $\Gamma_T\subset\widetilde{O}(T)$; for any $g\in\widetilde{O}(T)$ on the other hand it is easy to construct an element $G$ of $\Mon^2(L,T)$ restricting to it. Indeed, depending on whether the spinor norm of $g$ in $T$ is $\pm1$, we take $G$ as the extension to $L$ either of the isometry $g\oplus\id_Z$ or of $g\oplus\rho_{w}$, where $\rho_{w}$ is the reflection in a vector $w\in U\subset Z$ such that $w^2=2$.

The last part of the statement now follows from \cite[Theorem 9.11]{Joumaah}: it is a trivial consequence for $T=U(2)\oplus E_8(-2)$; when $T=U(2)\oplus D_4(-1)$ or $T=U\oplus E_8(-2)$, this comes from the fact that there is only one $\Gamma_T$-orbit for chambers of $C_T$ cut out by $\Delta(T)$, as the movable cone coincides with the ample cone by \cite{bht} and \cite{m_wall} and the movable cone is a fundamental chamber for the action of reflections by uniruled negative divisors ($-2$ classes in this case) by \cite[Section 6]{mark_tor}. 
\end{proof}

Following Joumaah in loc.cit., we now introduce the following notion. For the sake of simplicity, we give all definitions only in the case in which there is only one deformation type of pairs of type $T$.

\begin{defn}
A pair $(X,\iota)$ of type $T$ is {\it simple} if the invariant part $\mathcal{K}_X^{\iota}$ of the K\"ahler cone of $X$ is a connected component of the chamber decomposition 
\begin{equation}\label{invar-dec}
\mathcal{C}_X^{\iota}\setminus\bigcup_{D\in\Delta^{\iota}(X)}D^{\perp},
\end{equation}
 where $\mathcal{C}_X^{\iota}$ is the invariant part of the positive cone of $X$ and $\Delta^{\iota}(X)$ is the set of invariant wall divisors $D\in\Pic(X)$.
\end{defn}

In fact, it can happen that $\mathcal{K}_X^{\iota}$ is strictly contained in a chamber of (\ref{invar-dec}). Fix one chamber $K$ inside $C_T\setminus\cup_{\delta\in\Delta(T)}\delta^{\perp}$; the choice of a chamber $K$ identifies the choice of one connected component $D^+_Z$ of $D^0_Z$. Let $\Gamma_{Z,K}\subset \Gamma_Z$ be the restriction to $Z$ of the subgroup of $\Mon^2(L,T)$ of isometries leaving $K$ invariant.  

There is a subset $\Delta(K)\subset L$ such that the hyperplane arrangement $\mathcal{H}_K:=\bigcup_{\delta\in\Delta(K)}\delta^{\perp}$ is locally finite for the action of $\Gamma_{Z,K}$ and such that 
$$\Omega^s_{r,a,\delta}:=(D^+_Z\setminus \mathcal{H}_K)/\Gamma_{Z,K}$$
does not depend on the choice of $K$ inside its $\Gamma_T$ orbit and it is a coarse moduli space for simple pairs of type $T$ (\cite[Theorem 10.5]{Joumaah}). Given the set $\mathcal{M}_{r,a,\delta}^s$ of simple pairs of type $T$, the period map $\mathcal{P}_{r,a,\delta}:\mathcal{M}_{r,a,\delta}^s\rightarrow\Omega^s_{r,a,\delta}$ given by $(X,\iota)\mapsto [\eta(H^{2,0}(X)]$, with $\eta$ a $(\rho,T)$-polarization such that $\eta(\mathcal{K}_X^{\iota})=K$, is bijective and induces the structure of variety on $\mathcal{M}_{r,a,\delta}^s$.

\begin{cor}\label{connectedness}
The moduli spaces $\mathcal{M}_{10,10,0}^s$, $\mathcal{M}_{10,8,0}^s$  and $\mathcal{M}_{6,4,0}^s$ are normal irreducible quasi-projective varieties, respectively of dimension $11$, $11$ and $15$.
\end{cor}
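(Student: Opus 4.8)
The plan is to transfer every assertion from the moduli space $\mathcal{M}_{r,a,\delta}^s$ to the period space $\Omega^s_{r,a,\delta}=(D^+_Z\setminus\mathcal{H}_K)/\Gamma_{Z,K}$, exploiting that the period map $\mathcal{P}_{r,a,\delta}$ is a bijection which endows $\mathcal{M}_{r,a,\delta}^s$ with its variety structure (Joumaah's Theorem 10.5, recalled just above), together with Lemma \ref{chamber-orbits}, which guarantees that there is a single deformation type of pairs of type $T$ and that $\Gamma_T=\widetilde{O}(T)$, so that the construction of $\Omega^s_{r,a,\delta}$ is unambiguous. It therefore suffices to prove normality, irreducibility, quasi-projectivity and the dimension count for $\Omega^s_{r,a,\delta}$ itself.

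First I would compute the dimensions. Since $\rk(L)=3\cdot 2+2\cdot 8+1=23$ and $Z=T^{\perp}$ inside $L$, we have $\rk(Z)=23-\rk(T)$, giving $\rk(Z)=13$ for the two rank-ten lattices $T=U(2)\oplus E_8(-2)$ and $T=U\oplus E_8(-2)$, and $\rk(Z)=17$ for $T=U(2)\oplus D_4(-1)$ of rank six. As $L$ has signature $(3,20)$ and $T$ is hyperbolic, $Z$ has signature $(2,\rk(Z)-2)$, so $D_Z$ is a type IV domain of complex dimension $\rk(Z)-2$, namely $11$, $11$ and $15$. The open subset $D^+_Z$ has the same dimension, and removing the arrangement $\mathcal{H}_K$ and quotienting by $\Gamma_{Z,K}$, which is arithmetic and acts properly discontinuously, preserves it; this yields the asserted values $11,11,15$.

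For quasi-projectivity I would invoke the Baily--Borel theorem: $D^+_Z$ lies in the type IV Hermitian symmetric domain attached to $Z$, and $\Gamma_{Z,K}\subset O(Z)$ is arithmetic, so $D^+_Z/\Gamma_{Z,K}$ is quasi-projective. The arrangement $\mathcal{H}_K$, being cut out by hyperplanes orthogonal to classes of square $-2$ and to square $-10$ classes of divisibility two, descends to a closed algebraic subset (a finite union of Heegner divisors), so the open complement $\Omega^s_{r,a,\delta}$ remains quasi-projective. Normality is then immediate: $D^+_Z\setminus\mathcal{H}_K$ is a smooth, hence normal, complex manifold, and the quotient by the properly discontinuous action of $\Gamma_{Z,K}$ has at worst finite quotient singularities, so it is normal.

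It remains to prove irreducibility, which I regard as the point requiring the most care. The key is connectedness. By construction $D^+_Z$ is a single connected component of $D^0_Z$, hence connected; deleting the locally finite arrangement $\mathcal{H}_K$, whose members are complex hypersurface sections and thus of real codimension two, cannot disconnect it, so $D^+_Z\setminus\mathcal{H}_K$ is connected. The quotient map is a continuous surjection, so $\Omega^s_{r,a,\delta}$ is connected as well, and a connected normal variety is irreducible. The two facts that must be checked carefully are the algebraicity of the descended arrangement $\mathcal{H}_K$ (needed to keep the complement quasi-projective) and the proper discontinuity of $\Gamma_{Z,K}$; both are standard consequences of the arithmeticity of $O(Z)$ and of the theory of Heegner divisors on orthogonal modular varieties, while the remaining steps are formal.
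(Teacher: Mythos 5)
Your proposal is correct and takes essentially the same route as the paper: the paper's entire proof is the remark that the corollary ``follows from Lemma \ref{chamber-orbits} and Joumaah's work'', and your argument is precisely the intended expansion --- transfer everything through Joumaah's bijective period map to $\Omega^s_{r,a,\delta}$, invoke Lemma \ref{chamber-orbits} to ensure a single $\Gamma_T$-orbit of chambers (hence a single, unambiguous component), and finish with the standard facts about arithmetic quotients of type IV domains (Baily--Borel quasi-projectivity, normality from finite quotient singularities, connectedness after removing a locally finite arrangement of complex hypersurfaces, and the signature count giving dimensions $11$, $11$ and $15$).
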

\begin{proof}
It follows from Lemma \ref{chamber-orbits} and Joumaah's work.
\end{proof}
\begin{rem}
In our two cases, it is clear that any fourfold $X$ of $K3^{[2]}$-type with $\Pic(X)\cong T$ carries a simple non symplectic involution of type $T$, see for example \cite[Proposition 8.5]{BCS} for a proof of this fact: this happens exactly in the complex open set $$\mathcal{P}^{-1}\left(D_Z\setminus \bigcup_{\delta\in Z}(\delta^{\perp}\cap D_Z)\right)\subset \mathcal{M}_T.$$ This, together with Proposition \ref{connectedness}, is the reason why we can formulate Theorem \ref{main} in that form.
\end{rem}

\begin{rem}
To give a complete list of families of fourfolds of $K3^{[n]}$ type with a non symplectic involution, we need to analyze the special cases obtained by Ferretti \cite{Ferretti}. He produced special EPW sextics with contractible divisors and an involution which has invariant lattice of the form $\langle 2\rangle\oplus \langle-2\rangle^a$, for all $a\leq 10$, where all primitive elements have divisibility $1$. Therefore, the movable cone coincides with the ample cone and the moduli space is connected as in the case of $U(2)\oplus D_4(-1)$.
\end{rem}

\section{Double EPW quartic sections as moduli spaces of twisted sheaves}\label{derived}

Let $V= Q\cap C(\PP^2\times \PP^2)=Q\cap C(\PP(U_1 )\times \PP(\wedge^2 U_2 )) $ be a fixed Verra fourfold and let $k$ 
be a linear system of hyperplanes on $V\subset \PP^9$. The projections $\pi_i \colon V\to \PP^2$ for 
$i=1,2$ define two quadric bundle structures, given by two divisors $h$ and $h_2$ (such that $h+h_2=k$) on $V$ with 
discriminant the sextic curves $C_1, C_2\subset \PP^2$ respectively. 
Denote by $S_1, S_2$ the K3 surfaces being $2:1$ covers of $\PP^2$ branched along $C_1$ and $ C_2$ respectively and by $H$ the degree $2$ polarisation inducing $S_1\to \PP^2$. 
Since smooth quadrics admit two rulings by $\PP^1$, the quadric bundle $Z$ induces natural $\PP^1$ fibrations on $S_1$ and $S_2$. 
Thus two-torsion Brauer classes $\beta_1$ in $\Br_2(S_1)$ and $\beta_2$ in $\Br_2(S_2)$ (\cite{Yo}).

The map $\varphi\colon F(V)\to X_V\to D_V$ factorizes by a $\PP^1$ fibration $\phi$ and a $2:1$ cover 
such that $X_V$ is an IHS fourfold with Beauville-Bogomolov degree $4$ as discussed in Section
\ref{EPW quartic}. 
It was also proven in \cite{IKKR} that a general $X_V$ is isomorphic to a moduli space of twisted sheaves on some K3 surface.
The aim of this section is to make this relation more explicit.

\begin{thm}\label{moduli-twisted} If $V$ is smooth then 
the IHS fourfold $X_V$ is birational to the moduli space of sheaves 
on $S_1$ (resp. $S_2$) with Brauer class $\beta_1$ (resp.~$\beta_2$) 
and Mukai vector $(0,H,0)$.
\end{thm}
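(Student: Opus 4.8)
The plan is to turn the statement into a period computation: both varieties are projective IHS fourfolds of $K3^{[2]}$-type (the moduli space $M:=M_{(0,H,0)}(S_1,\beta_1)$ by Yoshioka's theorem, since $(0,H,0)^2=H^2=2$ so that $\dim M=v^2+2=4$, and $X_V$ by \cite{IKKR}), and we already know from \cite{IKKR} that a general $X_V$ is birational to a moduli space $M_{v_B}(S,\beta)$ of twisted sheaves on \emph{some} twisted K3 surface. Thus the content is to show that the unknown data $(S,\beta,v)$ can be taken to be $(S_1,\beta_1,(0,H,0))$. By the Torelli theorem for $K3^{[2]}$-type in the form of Markman, it suffices to produce a Hodge isometry of the second cohomology lattices in the appropriate monodromy orbit, and this I would extract by matching the twisted Mukai Hodge structure $\widetilde{H}(S_1,\beta_1)$ with the Mukai lattice attached to $X_V$.

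First I would identify the twisted K3 surface from the quadric bundle structure $\pi_1\colon V\to\PP^2$. Its discriminant is the smooth sextic $C_1$, whose double cover is by definition $S_1$, and the two rulings of the smooth quadric fibres glue into the order-two class $\beta_1\in\Br(S_1)$ furnished by the even Clifford algebra of $\pi_1$ (this is the Verra/quadric-bundle picture of \cite{Vprym}). The transcendental Hodge structure governed by $\pi_1$ is then the $\beta_1$-twisted transcendental lattice $\Gamma_{\beta_1}\subset T(S_1)$, with $H^{2,0}$ spanned by the twisted symplectic form $\sigma_{S_1,B}$. I would compare this with the transcendental lattice of $X_V$ (the orthogonal of $\Pic(X_V)$, carrying $H^{2,0}=\langle\sigma_{X_V}\rangle$), using the criterion of \cite{H} recalled in Section \ref{induced}: the presence of a copy of $U(k)$ in $\widetilde{H}(S_1,\beta_1)^{1,1}$ is exactly what makes $X_V$ a twisted-sheaf moduli space, and the transcendental parts then match by construction, forcing $S=S_1$ and $\beta=\beta_1$.

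It remains to pin down the Mukai vector. The key point is that the pencils of $(1,1)$-conics endow $X_V$ with a fibration over a $\PP^2$ (the conic-projection via $\pi_1$), which by Matsushita is Lagrangian; a rank-zero Mukai vector is precisely what produces such a Beauville--Mukai Lagrangian fibration on a moduli space of twisted sheaves, and matching the isotropic support class with the degree-two polarization $H$ of $S_1$ (the class pulled back from the double cover, with $H^2=2$) forces $v=(0,H,0)$. Geometrically this is the statement that, over a general line $\ell$, the genus-two double cover $D_\ell\in|H|$ and the variety of $(1,1)$-conics lying over it realize the $\beta_1$-twisted compactified Jacobian of $D_\ell$, i.e. the fibre of the twisted Beauville--Mukai system on $M$.

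I expect the main obstacle to be the precise identification of the Brauer twist and the degree, rather than the soft period-matching. Concretely, one must verify that the class arising from the rulings is \emph{exactly} $\beta_1$ (and not a different or trivial twist), and that the fibrewise torsor structure has the degree encoded by $(0,H,0)$; this requires controlling the even Clifford correspondence across the degenerate quadric fibres over $C_1$ and matching the two Lagrangian bases up to the natural correlation between $\PP^2$ and its dual. Once the twisted Hodge structures and the rank-zero class are matched, the Torelli theorem yields the birational equivalence; note that one obtains birationality rather than isomorphism precisely because different birational models correspond to different chambers of the movable cone, exactly as in the discussion of Section \ref{moduli}.
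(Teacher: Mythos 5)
There is a genuine gap at the heart of your argument: the ``soft period-matching'' you set aside is exactly the content of the theorem. Huybrechts' criterion recalled in Section \ref{induced} only tells you that $X_V$ is birational to a moduli space of twisted sheaves on \emph{some} twisted K3 surface, which is already known from \cite{IKKR}; nothing in your proposal actually produces a Hodge isometry between the transcendental lattice of $X_V$ and the $\beta_1$-twisted lattice $\Gamma_{\beta_1}\subset T(S_1)$ of the discriminant double cover. Saying the transcendental parts ``match by construction'' is an assertion, not a proof: the period of $X_V$ is defined via the Hilbert scheme of $(1,1)$ conics, and a priori has no computed relation to the Hodge theory of the quadric bundle $\pi_1$. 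Supplying that relation requires an explicit correspondence, and this is precisely what the paper builds instead of invoking Torelli: it uses Kuznetsov's semiorthogonal decomposition $D^b(V)=\langle \Phi(D^b(\PP^2,\mathcal{B}_0)),\dots\rangle$ for the quadric fibration \cite{Kuz}, the equivalence $D^b(\PP^2,\mathcal{B}_0)\simeq D^b(S_1,\beta_1)$, and the projection functor $pr$, sending a conic $c$ to $pr(\mathcal{I}_{c|V}(k))$. Lemma \ref{lem:twistedmoduli} shows this object is a twisted sheaf supported on a curve, and Lemma \ref{lem:injectivity} shows, via mutations and reconstruction of $\mathcal{I}_{c|D_c}(k)$ from $F_c$, that the assignment separates exactly the fibers of $F(V)\to X_V$; this yields a generically finite dominant, hence birational, map $X_V\dashrightarrow M_v(S_1,\beta_1)$ with no Torelli input at all. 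To complete your route you would have to construct a comparable categorical or cycle-theoretic correspondence (as in \cite{MS} for cubic fourfolds containing a plane), at which point you would essentially have rewritten the paper's proof.

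A second concrete gap is the identification of the Mukai vector. Your Lagrangian-fibration argument only pins down the rank-zero part and the support class, i.e.\ $v=(0,H,a)$: \emph{every} value of $a$ produces a Beauville--Mukai system over $|H|$, and the distinction between values of $a$ is exactly the fibrewise torsor degree that you name as an obstacle but never compute. The paper settles this by lattice theory, not fibration geometry: primitivity of the class of $\oo_c(k)$ in the Grothendieck ring reduces to $v=(0,H,a)$ with $a\in\{0,1\}$, and $a=1$ is excluded because $\Pic(M_{(0,H,1)}(S_1,\beta_1))$ contains a class of square $2$, whereas for general $V$ the lattice $\Pic(X_V)$ contains only classes of square divisible by $4$ (by \cite{IKKR}). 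Without an argument of this kind, your conclusion $v=(0,H,0)$ is unsupported even granting the period-matching.
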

We expect that $X_V$ is actually isomorphic to the moduli space of twisted sheaves above.
We will not need this stronger version since the considered IHS fourfolds (with the given lattices) do not admit other birational models.
We shall in fact describe how to associate to a $(1,1)$ 
conic on $V$ a twisted sheaf on $S_1$ (or $S_2$) (cf.~\cite{Kuz,MS}).
Before the proof of our theorem we prove some technical results.

Throughout the proof we will fix one of the two projections, $\pi_1$, having as discriminant curve the 
smooth sextic $C_1$. Consider the projective bundle
$$q:\bar{\PP}:=\PP(3(\oo_{\PP^2}(1))\oplus \oo_{\PP^2})\to \PP^2$$ being a natural desingularisation 
$\beta$ of $C(\PP^2\times \PP^2)$. Denote  by $\alpha$ the natural embedding 
of $V$ in 
$\PP(3(\oo_{\PP^2}(1))\oplus \oo_{\PP^2})$, such that $\pi_1=q\circ 
\alpha $.

Since $V$ is a quadric fibration one can use the results from \cite[Thm.~4.2]{Kuz} showing that 
the derived category admits a semiorthogonal decomposition 
\begin{equation}\label{qA}D^b(V)=<\Phi(D^b(\PP^2,\mathcal{B}_0)),\oo_V(-h),\oo_V,\oo_V(h),\oo_V(-h+k),\oo_V(k),\oo_V(h+k)>\end{equation}
where:
\begin{itemize}
\item  $\mathcal{B}_0$ is the sheaf of even parts of the Clifford algebra (and $\mathcal{B}_1$ of the odd 
parts),
\item $ D^b(\PP^2,\mathcal{B}_0)$ is equivalent to the category of the twisted K3 surface 
$D^b(S_1,\beta_1)$
through an equivalence described in \cite[Lem.~4.2]{Ku2},
\item $\Phi\colon D^b(\PP^2, \mathcal{B}_0)\to D^b(V)$ is a fully faithful functor (see \cite[Prop.~4.9]{Kuz}) such that $$\Phi(\mathcal{A})\colon= \pi_1^{\ast}
\mathcal{A}\otimes_{\pi_1^{\ast}\mathcal{B}_0} \mathcal{E}'$$
\item $\mathcal{E}'$ is a rank $4$ vector bundle on $V$,
with a natural structure of a right $\pi_1^{\ast} \mathcal{B}_0$ module, with a resolution 
$$0 \to q^{\ast} \mathcal{B}_0(-2k) \to q^{\ast} \mathcal{B}_1(-k) \to \alpha_{\ast} \mathcal{E}' \to 0 .$$
\end{itemize}

Denote by $pr\colon D^b(V)\to D^b(\PP^2,\mathcal{B}_0)$ the adjoint functor of $\Phi\circ [1]$: it is given by
\begin{equation}
pr(-)=\pi_{1\ast}((-)\otimes \oo_V(3h)\otimes \mathcal{E}[1])
\end{equation}
where $\mathcal{E}$ satisfies (see \cite[Eq.(15), Lem.~4.10]{Kuz})
$$0 \to q^{\ast} \mathcal{B}_1(-3k-h) \to q^{\ast} \mathcal{B}_0(-2k) \to \alpha_{\ast} \mathcal{E} \to 0 .$$
It follows from the decomposition (\ref{qA}) that as adjoint to $\Phi\circ [1]$ the functor  $pr$ can be presented as a composition of left mutations:
\begin{equation}\label{mutation pr} pr=L_{-h}\circ L_0\circ L_{h}\circ L_{-h+k}\circ L_{k}\circ L_{h+k},
\end{equation}

where 
$L_{mh+ak}$ denote the left mutation with respect to $\mathcal{O}_V(mh+ak)$. In particular, 
\begin{equation}\label{vanishing pr}
pr(<\oo_V(-h),\oo_V,\oo_V(h),\oo_V(-h+k),\oo_V(k),\oo_V(h+k)>)=0.
\end{equation}

Let $c\subset V$ be a $(1,1)$ conic that defines two lines $l_1,l_2$ on the $\PP^2$ bases. 
Let $Q_{l_1,l_2}$ be the quadric threefold being the cone 
$$C(l_1\times l_2) \subset C(\PP^2\times \PP^2)\subset \PP^9$$ and $D_c:=Q_{l_1,l_2}\cap Q$ 
the corresponding del Pezzo surface of degree $4$. 
It was proven in \cite[\S 3]{IKKR} that for a generic choice of $c$ the surface $D_c$ is smooth and that the fibers of $\phi$ are linear 
systems of appropriate conics on $D_c$ so the fiber is determined by $\mathcal{I}_{c|D_c}(k)$
(see \cite[\S 3]{IKKR} for the case when $D_c$ is singular).
We are interested in the image 
$pr( \mathcal{I}_{c| V} (k) )$ and shall show that it is a twisted sheaf on $S_1$ with the Brauer class $\beta_1$.

Let $f:S_1\to \PP^2$ be the double cover.
Recall that from \cite[Lem.~4.2]{Ku2} we have an equivalence
$D^b(S_1,\beta_1) \simeq D^b(\PP^2,\mathcal{B}_0)  $ given by $\Gamma(F)=f_*F$.
Consider the functor $$\Xi\colon F(V)\to D^b(S_1,\beta_1)$$ being the composition of $\Gamma^{-1}$ with the following functor
$$ F(V)\ni c\to pr(\mathcal{I}_{c|V}(k))\in D^b(\PP^2,\mathcal{B}_0).$$
We shall show that $\Xi$ induces a birational map between $X_V$ 
and the moduli space of twisted sheaves of $S_1$ with 
Brauer class $\beta_1$  and Mukai vector $(0,H,0)$. 
In order to prove Theorem \ref{moduli-twisted} we need the following lemma. 
\begin{lem}\label{lem:twistedmoduli} 
For a general $(1,1)$ conic $c\subset Y$, the element  $\Xi(\mathcal{I}_{c| V} (k) )
=\Xi(\mathcal{I}_{c|D_c}(k))$ is a twisted sheaf on $S_1$ inside 
$D^b(S_1,\beta_1)$. 
\end{lem}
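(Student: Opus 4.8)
The plan is to establish the two assertions separately: that $\Xi(\mathcal{I}_{c|V}(k))$ and $\Xi(\mathcal{I}_{c|D_c}(k))$ are the same object, and that this object lies in the heart of $D^b(S_1,\beta_1)$, i.e.\ is a genuine twisted sheaf. Since $\Xi=\Gamma^{-1}\circ pr$ is the composition of an equivalence with the triangulated functor $pr$, the first assertion reduces to the vanishing $pr(\mathcal{I}_{D_c|V}(k))=0$ in $D^b(V)$, and the second to a concentration statement for the relative cohomology defining $pr$.

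For the equality I would use the inclusions $c\subset D_c\subset V$ and the short exact sequence
\[
0\to \mathcal{I}_{D_c|V}(k)\to \mathcal{I}_{c|V}(k)\to \mathcal{I}_{c|D_c}(k)\to 0,
\]
which becomes a distinguished triangle after applying $\Xi$; hence $\Xi(\mathcal{I}_{c|V}(k))\cong\Xi(\mathcal{I}_{c|D_c}(k))$ as soon as $pr(\mathcal{I}_{D_c|V}(k))=0$. The point is that, for general $c$, the del Pezzo surface $D_c=V\cap C(l_1\times l_2)$ is cut out inside $V$ by the two divisors $\pi_1^{-1}(l_1)$ and $\pi_2^{-1}(l_2)$, whose classes are $h$ and $h_2$ in some order and satisfy $h+h_2=k$. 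Its Koszul resolution, twisted by $k$, reads
\[
0\to \oo_V\to \oo_V(h)\oplus\oo_V(-h+k)\to \mathcal{I}_{D_c|V}(k)\to 0,
\]
so $\mathcal{I}_{D_c|V}(k)$ lies in the triangulated subcategory generated by $\oo_V$, $\oo_V(h)$ and $\oo_V(-h+k)$. All three occur among the line bundles of (\ref{vanishing pr}), so $pr$ annihilates $\mathcal{I}_{D_c|V}(k)$ and the equality follows.

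For the sheaf property it suffices, by the previous step, to treat $\mathcal{I}_{c|D_c}(k)$, a sheaf supported on $D_c$. Restricting $\pi_1$ yields a conic bundle $\pi_1|_{D_c}\colon D_c\to l_1\cong\PP^1$ with general fibre a smooth conic $F_t$, and the $(1,1)$ conic $c$ is a section of this bundle, meeting each $F_t$ in a single point. Using the formula $pr(-)=\pi_{1\ast}(-\otimes\oo_V(3h)\otimes\mathcal{E})[1]$ and the fact that $\mathcal{E}$ is locally free near $D_c$ for general $c$, the object $\Xi(\mathcal{I}_{c|D_c}(k))$ is a sheaf precisely when $R^0\pi_{1\ast}(\mathcal{I}_{c|D_c}(k)\otimes\oo_V(3h)\otimes\mathcal{E})=0$, the direct images above degree one vanishing automatically since the fibres are curves. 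By cohomology and base change this reduces, on a general fibre, to the vanishing
\[
H^0(F_t,\ \mathcal{E}|_{F_t}\otimes L_t)=0,
\]
where $L_t$ is the line bundle obtained by restricting $\mathcal{I}_{c|D_c}(k)\otimes\oo_V(3h)$ to $F_t$ (a line bundle precisely because $c$ meets $F_t$ in one point).

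I expect this last fibrewise vanishing to be the main obstacle, as it requires identifying the restriction $\mathcal{E}|_{F_t}$ of the Clifford module to the conic $F_t$ inside the quadric surface fibre $\pi_1^{-1}(t)$. I would compute it from the resolution $0\to q^{\ast}\mathcal{B}_1(-3k-h)\to q^{\ast}\mathcal{B}_0(-2k)\to\alpha_{\ast}\mathcal{E}\to 0$, restricting to $F_t$ and inserting the splitting type of the even and odd Clifford sheaves $\mathcal{B}_0,\mathcal{B}_1$ along a line of the base; the outcome should present $\mathcal{E}|_{F_t}$ as a sum of line bundles negative enough that $\mathcal{E}|_{F_t}\otimes L_t$ has no sections. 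Genericity of $c$ is used throughout to ensure that $D_c$ is a smooth del Pezzo quartic meeting the quadric fibration transversally and lying in the locus where $\mathcal{E}$ is locally free.
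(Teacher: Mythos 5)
Your first half coincides exactly with the paper's argument: the same short exact sequence relating $\mathcal{I}_{c|V}(k)$ and $\mathcal{I}_{c|D_c}(k)$, the same Koszul resolution $0\to \oo_V\to \oo_V(h)\oplus\oo_V(-h+k)\to \mathcal{I}_{D_c|V}(k)\to 0$, and the same appeal to the vanishing list (\ref{vanishing pr}). That part is complete and correct.

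The second half, however, has a genuine gap at precisely the point you flag yourself: the fibrewise vanishing $H^0(F_t,\mathcal{E}|_{F_t}\otimes L_t)=0$ is never proved, only announced as what "the outcome should" be. Since the whole content of the sheaf-property assertion rests on this, the proposal as written does not establish the lemma; carrying it out would require identifying $\mathcal{E}$ restricted to the smooth quadric fibres of $\pi_1$ in terms of spinor bundles (via the resolution $0\to q^{\ast}\mathcal{B}_1(-3k-h)\to q^{\ast}\mathcal{B}_0(-2k)\to\alpha_{\ast}\mathcal{E}\to 0$ restricted to a fibre $\PP^3_t$), plus a flatness/torsion-freeness argument to upgrade generic-fibre vanishing to $R^0\pi_{1\ast}=0$ --- your appeal to "cohomology and base change" is loose here, though repairable since $\mathcal{I}_{c|D_c}(k)=\oo_{D_c}(k-c)$ is a line bundle on a surface flat over $l_1$. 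The paper avoids this computation entirely by one more round of mutations: it first shows $pr(\oo_{D_c}(k))=0$, using the sequences
\begin{equation*}
0\to \oo_{H_2}(H_2)\to \oo_{H_2}(H_1+H_2)\to \oo_{D_c}(k)\to 0,\qquad
0\to \oo_V(-H_2)\to \oo_V\to \oo_{H_2}\to 0
\end{equation*}
together with (\ref{vanishing pr}), and then deduces from $0\to \mathcal{I}_{c|D_c}(k)\to \oo_{D_c}(k)\to \oo_c(k)\to 0$ that
\begin{equation*}
pr(\mathcal{I}_{c|D_c}(k))=pr(\oo_c(k))[-1]={\pi_1}_{\ast}\bigl(\mathcal{E}(k)|_c\bigr)\otimes \oo_{\PP^2}(3).
\end{equation*}
The shifts cancel, and since $\pi_1|_c\colon c\to l_1$ is an isomorphism onto a line, the derived pushforward is automatically an honest sheaf --- no higher direct images, no $R^0$-vanishing, no splitting-type analysis of $\mathcal{E}$ on fibres needed. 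In short: your reduction to $\mathcal{I}_{c|D_c}(k)$ is right, but you stopped one mutation too early; replacing the support $D_c$ (a conic bundle over $l_1$, forcing a nontrivial relative cohomology computation) by the conic $c$ itself (finite over $l_1$) is what makes the sheaf property formal.
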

\begin{proof} It is enough to show the statement for $pr(\mathcal{I}_{c|V}(k))$.
Let $H_1$ and $H_2$ be $(1,0)$ and $(0,1)$ divisors on $V$ such that 
$D_c=H_1\cap H_2$. 

By the exact sequence
$$0\to \mathcal{I}_{D_c| V} (k) \to \mathcal{I}_{c| V}(k)\to \mathcal{I}_{c|D_c} 
(k) \to0$$
it is enough to prove that $pr( \mathcal{I}_{D_c| V} (k))=0$.
The last follows from (\ref{vanishing pr}) and the short exact sequence
$$0\to \oo_V \to \oo_V(1,0)\oplus \oo_V(0,1) \to \mathcal{I}_{D_c| V} (k) \to 0,$$ thus we get $pr(\mathcal{I}_{c| V} (k) )=pr(\mathcal{I}_{c|D_c}(k))$.

From (\ref{vanishing pr}) we also have $pr(\oo_V)=pr(\oo_V(H_2))=pr(\oo_V(H_1+H_2))=0$. 
So using the sequences

$$0\to \oo_{H_2}(H_2)\to \oo_{H_2}(H_1+H_2) \to \oo_{D_c}(k)\to 0,$$
$$0\to \oo_V(-H_2)\to \oo_V \to \oo_{H_2}\to 0,$$
we infer $pr(\oo_{D_c}(k))=0$. 
Now from

$$0\to \mathcal{I}_{c|D_c}(k)\to \oo_{D_c}(k)\to \oo_c(k)\to 0,$$
we obtain 
$$pr(\mathcal{I}_{c|D_c}(k))=
pr(\oo_c(k))[-1]={\pi_1}_{\ast}(\oo_c(k)[-1]\otimes 
\oo_V(3h)\otimes \mathcal{E}[1])=$$
$$={\pi_1}_{\ast}(\mathcal{E}(k)|_c)\otimes\oo_{\PP^2}(3).$$
\end{proof}

\begin{lem}\label{lem:injectivity}
Two general $(1,1)$ conics $c_1,c_2\subset V$ are in the same fiber of $\phi$ iff 
$$pr( \mathcal{I}_{c_1| V} (k) )=pr( \mathcal{I}_{c_2| V} (k)).
$$ 
\end{lem}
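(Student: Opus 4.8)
The plan is to treat the two implications separately: the forward one (same fiber $\Rightarrow$ equal images) is essentially formal, while the converse, which is genuine injectivity, carries all the geometric content.

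For the forward implication I would argue as follows. If $c_1$ and $c_2$ lie in the same fiber of $\phi$, then by the description recalled from \cite[\S 3]{IKKR} they lie on a common smooth degree-four del Pezzo surface $D_c$ and belong to the same pencil, hence are linearly equivalent Cartier divisors on $D_c$. Writing $[c]$ for their common class, this gives isomorphisms of sheaves $\mathcal{I}_{c_1|D_c}(k)\cong \oo_{D_c}(k-c)\cong \mathcal{I}_{c_2|D_c}(k)$ on $D_c$. Pushing forward to $V$ and invoking the identity $pr(\mathcal{I}_{c_i|V}(k))=pr(\mathcal{I}_{c_i|D_c}(k))$ established in the proof of Lemma \ref{lem:twistedmoduli}, the two images coincide, being $pr$ applied to isomorphic objects.

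For the converse I would reconstruct the fiber from the object $pr(\mathcal{I}_{c|V}(k))$. Using the formula $pr(\mathcal{I}_{c|V}(k))={\pi_1}_{\ast}(\mathcal{E}(k)|_c)\otimes\oo_{\PP^2}(3)$ established above, this object is, up to shift, a twisted sheaf on $S_1$ whose support, read through the equivalence $D^b(\PP^2,\mathcal{B}_0)\simeq D^b(S_1,\beta_1)$, is the genus-two curve $f^{-1}(l_1)\in|H|$ lying over the line $l_1=\pi_1(c)\subset\PP^2$; thus equality of the two images first forces $l_1^{(1)}=l_1^{(2)}=:l_1$. Restricting $\pi_1$ over $l_1$ realizes $c$ as a section of the quadric surface bundle $V_{l_1}\to l_1$, and the Clifford (spinor) correspondence underlying the equivalence turns $\mathcal{E}(k)|_c$ into a rank-one twisted sheaf on $f^{-1}(l_1)$. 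I would then prove that this rank-one sheaf determines the del Pezzo surface $D_c$ together with the pencil $|c|$, forcing $c_1$ and $c_2$ into the same fiber of $\phi$.

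The main obstacle is precisely this last reconstruction step: showing that the rank-one twisted sheaf on $f^{-1}(l_1)$ remembers the entire fiber datum $(D_c,|c|)$ — in particular the second projection $l_2=\pi_2(c)$ and the linear system — and not merely the line $l_1$. I would attack this by making the Clifford correspondence explicit through the resolution $0\to q^{\ast}\mathcal{B}_1(-3k-h)\to q^{\ast}\mathcal{B}_0(-2k)\to \alpha_{\ast}\mathcal{E}\to 0$, identifying the sheaf on $f^{-1}(l_1)$ with the spinor data of the section $c$ of $V_{l_1}\to l_1$ and checking that this data pins down the pencil. Alternatively, and more economically, the forward implication shows that $pr$ descends to a morphism $\bar{\Xi}$ from $X_V$ to the moduli space $M_{(0,H,0)}(S_1,\beta_1)$ of twisted sheaves, a map between two IHS fourfolds of $K3^{[2]}$-type; since $\mathcal{E}$ induces an equivalence of the admissible subcategory $\Phi(D^b(\PP^2,\mathcal{B}_0))$ with $D^b(S_1,\beta_1)$, the functor $pr$ separates objects coming from distinct fibers, so $\bar{\Xi}$ is generically injective, which is exactly the content of the statement for general conics.
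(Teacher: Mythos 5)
Your forward implication is sound and is essentially the one the paper leaves implicit: two conics in one fiber of $\phi$ lie on a common smooth quartic del Pezzo surface $D_c$ and are linearly equivalent there, so $\mathcal{I}_{c_1|D_c}(k)\cong \oo_{D_c}(k-c_1)\cong \oo_{D_c}(k-c_2)\cong \mathcal{I}_{c_2|D_c}(k)$, and the identity $pr(\mathcal{I}_{c|V}(k))=pr(\mathcal{I}_{c|D_c}(k))$ from Lemma \ref{lem:twistedmoduli} finishes it. The converse, however, is where all the content sits, and neither of your two routes closes it. Your ``economical'' second route is a non sequitur: full faithfulness of $\Phi$ only says that $pr$ loses nothing on objects of the admissible subcategory $\Phi(D^b(\PP^2,\mathcal{B}_0))$, but the ideal sheaves $\mathcal{I}_{c|V}(k)$ do \emph{not} lie in that subcategory --- $pr$ is the composition of six left mutations (\ref{mutation pr}) and annihilates every component along $\langle\oo_V(-h),\oo_V,\dots,\oo_V(h+k)\rangle$, so a priori two conics in different fibers could have ideal sheaves whose difference is entirely absorbed by the exceptional collection. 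Ruling that out is precisely the assertion of the lemma; your argument assumes it. Your first route correctly identifies the genuine obstacle (the rank-one twisted sheaf on $f^{-1}(l_1)$ must remember $l_2$ and the pencil, not just $l_1$) but explicitly leaves it unresolved, so as written the proposal has a gap exactly at the injectivity step.

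The paper closes this gap by a mechanism different from your proposed spinor analysis, and it is worth recording how. Explicit cohomology computations on $D_c$ (a blow-up of $\PP^2$ at five points, with $\oo_V(k)|_{D_c}=3H-E_1-\dots-E_5$ and $c\in|H-E_2|$) give $H^*(\mathcal{I}_{c|D_c}(k-h))=H^*(\mathcal{I}_{c|D_c}(h))=H^*(\mathcal{I}_{c|D_c})=H^*(\mathcal{I}_{c|D_c}(-h))=H^*(\mathcal{I}_{c|D_c}(h-k))=0$, so $\mathcal{I}_{c|D_c}(k)$ is already right-orthogonal to $\oo_V(h),\oo_V(-h+k),\oo_V(k),\oo_V(h+k)$ (and $\oo_V(-h+2k)$); hence of the six mutations composing $pr$ only $L_0$ and $L_{-h}$ act nontrivially. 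The effect of $L_0$ is the short exact sequence $0\to F_c\to 2\oo_V\to \mathcal{I}_{c|D_c}(k)\to 0$, which is reversed by the bi-duality trick of Addington--Lehn, so $F_c$ determines $\mathcal{I}_{c|D_c}(k)$; and $L_{-h}$ is an equivalence between the two relevant orthogonal subcategories with inverse the right mutation $R_{-h}$, so $pr(\mathcal{I}_{c|V}(k))=L_{-h}(F_c)$ determines $F_c$. Thus the projection determines the sheaf $\mathcal{I}_{c|D_c}(k)$ itself, whose support recovers $D_c$ and whose class recovers the pencil, i.e.\ the fiber of $\phi$. If you wish to pursue your first route instead, the statement you must actually prove is this reconstruction for the Clifford-module data of a section of the quadric bundle over $l_1$, which is not formal; the mutation argument above is the paper's way of sidestepping it entirely.
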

\begin{proof}
In order to prove the statement, notice first that
$$\mathcal{I}_{c|D_c}(k)\in <\oo_V(h),\oo_V(-h+k),\oo_V(k),\oo_V(h+k),\oo_V(-h+2k)>^{\perp}.$$
Indeed  it is enough to prove that $$H^*(\mathcal{I}_{c|D_c}(k-h))=H^*(\mathcal{I}_{c|D_c}(h))=H^*(\mathcal{I}_{c|D_c})=H^*(\mathcal{I}_{c|D_c}(-h))=H^*(\mathcal{I}_{c|D_c}(h-k))=0.$$ Since $c$ is general, $D_c$ is a del Pezzo surface of degree 4 i.e.~a blow up of $\mathbb{P}^2$ in 5 points in general position, its Picard group is generated by the pullback $H$ of the hyperplane class on $\mathbb{P}^2$ and the exceptional divisors $E_1,\dots E_5$. We have $\oo_V(k)|_{D_c}=3H-E_1-\dots E_5$ and without loss of generality we can assume $\oo_V(h)|_{D_c}=H-E_1$ and $c\in |H-E_2|$. We can now easily compute 
$$H^0(\mathcal{I}_{c|D_c}(k-h))=H^0(\mathcal{I}_{c|D_c}(h))=H^0(\mathcal{I}_{c|D_c})=H^0(\mathcal{I}_{c|D_c}(-h))=H^0(\mathcal{I}_{c|D_c}(h-k))=0$$ and by Serre duality 
$$H^2(\mathcal{I}_{c|D_c}(k-h))=H^2(\mathcal{I}_{c|D_c}(h))=H^2(\mathcal{I}_{c|D_c})=H^2(\mathcal{I}_{c|D_c}(-h))=H^2(\mathcal{I}_{c|D_c}(h-k))=
0$$ and we conclude by Riemann-Roch theorem that $$H^1(\mathcal{I}_{c|D_c}(k-h))=H^1(\mathcal{I}_{c|D_c}(h))=H^1(\mathcal{I}_{c|D_c})=H^1(\mathcal{I}_{c|D_c}(-h))=H^1(\mathcal{I}_{c|D_c}(h-k))=0.$$
Consider now the following
 \begin{equation} \label{sequence Fc}
0\to F_c \to 2\oo_V \to \mathcal{I}_{c| D_c} (k) \to 0,
\end{equation}

where $F_c:=L_0(\mathcal{I}_{c|D_c}(k))$ is a rank two sheaf. 
By the above we have $pr(\mathcal{I}_{c|D_c}(k))=L_{-h}(F_c)$ and 
$$F_c\in <\oo_V,\oo_V(h),\oo_V(-h+k),\oo_V(k),\oo_V(h+k)>^{\perp}.$$

Note also that $\mathcal{I}_{c|D_c}(k)$ can be reconstructed from the sheaf $F_c$ 
by taking the bi-dual of  the above sequence (cf.~\cite[p.4]{AL}). 

Finally we observe that $L_{-h}$ defines a derived equivalence 
$$<\oo_V,\oo_V(h),\oo_V(-h+k),\oo_V(k),\oo_V(h+k), \oo_V(2k-h)>^{\perp} \rightarrow $$ 
$$<\oo_V(-h),\oo_V,\oo_V(h),\oo_V(-h+k),\oo_V(k),\oo_V(h+k)>^{\perp},$$ whose inverse is the right mutation
$R_{-h}$. It follows that  $\mathcal{I}_{c|D_c}(k)$ is uniquely determined by $pr(\mathcal{I}_{c|D_c}(k))=L_{-h} (F_C)$.
\end{proof}

\subsection{The proof of Theorem \ref{moduli-twisted}}
By Lemma \ref{lem:twistedmoduli} and by tensoring with an appropriate power 
of $H$, one
deduces that there exists a birational map 
$$F(V)=Hilb_{(1,1)}(V) \dasharrow  M_v(S_1,\beta_1),$$ where $v = (0, G, a)$ and
$a \in \{0, 1\}$ and $G\in \Pic(S_1)$ (the rank $0$ part is obvious). Indeed, 
these objects are also stable as their support is on a curve, which is generically smooth, and for a general $K3$ surface $S_1$ there are no strictly semistable objects with a primitive Mukai vector. 
Notice that $\oo_c(k)$ has primitive class in the Grothendieck $K$ ring, hence 
so does its projection in the twisted $K3$ category (in general one can argue as in
\cite[Prop.~3.4]{MS}).
Thus, the Mukai vector of the projection is of the form $(0,H,a)$ for some $a$.

Now, Lemma \ref{lem:twistedmoduli} proves that there is a rational map from $X_V$ 
into $M_{(0,H,0)}(S_1,\beta_1)$ and Lemma \ref{lem:injectivity} proves 
that the map is finite and dominant, hence we have that the two spaces are 
isomorphic by smoothness of $X_V$.

As proven in \cite{IKKR}, $\Pic(X_V)$ for general $V$ 
contains only elements of square which is a multiple of $4$. 
If $a=1$, $\Pic(M_{(0,H,1)}(S_1,\beta_1))$ contains the class of $H-[S_1]$, 
which has square $2$. Hence, $a=0$.\qed

\section{IHS fourfolds with Picard lattice $U(2)\oplus E_8(-2)$}

In this section we will construct eleven dimensional families of symmetric fourfolds of $K3^{[2]}$-type, 
first as the family of double EPW quartics possessing an additional symplectic 
involution and then  as a family of moduli spaces of twisted sheaves on a $K3$ surface. 

\subsection{Verra fourfolds with involution}  \label{Verras}
Let us study Verra fourfolds that are symmetric with respect to a linear involution of $C(\mathbb{P}^2\times \mathbb{P}^2)$.
We shall see that there are three families of such fourfolds. Let us denote by $V'\subset \mathbb{P}^2\times \mathbb{P}^2$ a Verra threefold associated to a symmetric Verra fourfold $V$. Clearly $V'$ is also symmetric.

Furthermore, let us observe that the action on 
$\mathbb{P}^2\times \mathbb{P}^2$ inducing the action on $V'$ is of the form $\iota=(\iota_1, \iota_2)$ such that one of the following holds:
\begin{enumerate}
\item $\iota_1=\id$
\item $\iota_2=\id$
\item $\iota_i$ are both nontrivial 
\end{enumerate}
The symmetric Verra 3-folds are characterized by the fact that they are invariant under the action of $\iota$ on $\mathbb{P}^2\times \mathbb{P}^2$. Furthermore, the quadric $q$ defining $V'$ can be chosen to be invariant under the action induced by $\iota$ on $\mathbb{P}^8\supset \mathbb{P}^2\times\mathbb{P}^2$. This is because only in those cases the involution $\iota$ lifts to the double cover of $\mathbb{P}^2\times \mathbb{P}^2$ branched over the Verra 3-fold $Q\cap ( \mathbb{P}^2\times \mathbb{P}^2)$ i.e. the Verra 4-fold. 
Let us denote by $(x_1,x_2,x_3),(y_1,y_2,y_3)$ the coordinates on $\mathbb{P}^2\times \mathbb{P}^2$;  without loss of generality we may assume that we are in one of the following possibilities:
\begin{enumerate}
\item $\iota((x_1,x_2,x_3),(y_1,y_2,y_3))=((x_1, x_2,x_3),(-y_1,y_2,y_3))$
\item $\iota((x_1,x_2,x_3),(y_1,y_2,y_3))=((-x_1,x_2,x_3),( y_1,y_2,y_3))$
\item$\iota((x_1,x_2,x_3),(y_1,y_2,y_3))=((-x_1, x_2,x_3),(-y_1,y_2,y_3))$
\end{enumerate}

Let us write down  in each case the divisors of type $(2,2)$ on $\mathbb{P}^2\times \mathbb{P}^2$ invariant under $\iota$. Note that in each of these cases the involution $\iota$ admits two lift to automorphism of the Verra fourfold $V$. These involutions commute with the covering involution which is their composition.
In each case, the Verra 3-fold is described by an equation of degree $(2,2)$ given by 
$$\sum_{
(i_1\dots i_3,  j_1\dots j_3) \in A 
}  a_{(i_1\dots i_3,  j_1\dots j_3)} x_1^{i_1}x_2^{i_2}x_3^{i_3}y_1^{j_1}y_2^{j_2}y_3^{j_3}
$$
with $A\subset \mathbb{N}^6$ given by:
\begin{enumerate}
\item $A=\{(i_1\dots i_3,  j_1\dots j_3) \in \mathbb{N}^6: i_1+i_2+i_3=j_1+j_2+j_3=2, i_1 \in 2\mathbb{N}\}$,
\item $A=\{(i_1\dots i_3,  j_1\dots j_3) \in \mathbb{N}^6: i_1+i_2+i_3=j_1+j_2+j_3=2, j_1 \in 2\mathbb{N}\}$,
\item $A=\{(i_1\dots i_3,  j_1\dots j_3) \in \mathbb{N}^6: i_1+i_2+i_3=j_1+j_2+j_3=2, (i_1+j_1) \in 2\mathbb{N}\}$.
\end{enumerate}
We can now compute the discriminant of both projections to $\mathbb{P}^2$ for the general element of each family. We obtain:

\begin{lemm}\label{discr}
\begin{enumerate}
\item  If a Verra fourfold admits an involution of type $(1)$ (resp.~$(2)$) as above then the projection to $(x_1,x_2, x_3)$  (resp. $(y_1,y_2, y_3)$) has discriminant a smooth symmetric sextic and the projection to $(y_1,y_2, y_3)$ ( resp. $(x_1,x_2, x_3)$) has discriminant being the union of a conic and a quartic. 
\item  If the Verra fourfold admits an involution of type $(3)$, both projections have smooth symmetric sextics as discriminants.
\end{enumerate}
\end{lemm}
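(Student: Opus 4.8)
The plan is to read off both discriminant sextics directly from the symmetric-matrix presentation of the defining $(2,2)$-form, and to let the parity imposed by the index set $A$ do all the work. First I would write the (symmetric) Verra threefold equation $F(x,y)$, of bidegree $(2,2)$, in the two dual ways
$$F(x,y)=\sum_{1\le a\le b\le 3} M_{ab}(x)\,y_ay_b=\sum_{1\le a\le b\le 3}N_{ab}(y)\,x_ax_b,$$
so that the symmetric matrices $M(x)=(M_{ab}(x))$ and $N(y)=(N_{ab}(y))$, with entries quadratic in $x$ and in $y$ respectively, are exactly the matrices of the two quadric bundle structures $\pi_1$ and $\pi_2$. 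By definition the discriminant of $\pi_1$ is the sextic $\{\det M(x)=0\}\subset\PP^2$ and that of $\pi_2$ is $\{\det N(y)=0\}\subset\PP^2$; note that only the threefold equation enters, so the two lifts to $V$ play no role here. The whole statement then becomes a computation of how the condition defining $A$ shapes these two matrices.

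For type $(3)$, the condition $i_1+j_1\in2\mathbb{N}$ says that each $M_{ab}(x)$ has a definite parity in $x_1$: the entries $M_{11},M_{22},M_{33},M_{23}$ are even and $M_{12},M_{13}$ are odd in $x_1$. Hence, writing $\iota_1\colon x_1\mapsto-x_1$ for the involution induced on the base of $\pi_1$ and $D=\mathrm{diag}(1,-1,-1)$, one has the equivariance $M(\iota_1 x)=D\,M(x)\,D$. Taking determinants gives $\det M(\iota_1 x)=\det M(x)$, so the $\pi_1$-discriminant is $\iota_1$-invariant, i.e. a symmetric sextic; the identical argument applied to $N(y)$ yields a symmetric sextic for $\pi_2$. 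This gives part $(2)$ up to smoothness.

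For type $(1)$ (type $(2)$ being the same under $x\leftrightarrow y$) the condition is $i_1\in2\mathbb{N}$, so now every monomial has even $x_1$-degree. On one hand every entry $M_{ab}(x)$ is then even in $x_1$, whence $\det M(x)$ is even in $x_1$ and the $\pi_1$-discriminant is again a symmetric sextic. On the other hand the absence of monomials with $i_1=1$ forces $N_{12}(y)=N_{13}(y)=0$, so $N(y)$ is block diagonal with blocks of sizes $1$ and $2$; therefore
$$\det N(y)=N_{11}(y)\cdot\big(N_{22}N_{33}-N_{23}^2\big)(y)$$
splits as a product of a conic and a quartic, which is precisely the asserted form of the $\pi_2$-discriminant. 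This proves the structural content of part $(1)$.

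The remaining, and main, point is the smoothness of the symmetric sextic $\{\det M(x)=0\}$ for a general invariant $F$. By Jacobi's formula $d(\det M)=\mathrm{tr}(\mathrm{adj}(M)\,dM)$, a point $p$ of the sextic is singular only when $\det M(p)=0$ and $d(\det M)(p)=0$; these are three conditions on the two-dimensional base and so have no common solution for a general matrix $M$, while the locus $\{\mathrm{corank}\,M\ge2\}$ on which they hold automatically is the codimension-$3$ stratum of symmetric $3\times3$ matrices and is likewise empty on a surface. The real issue is to guarantee this genericity \emph{within} the constrained (invariant) family, the only dangerous region being the fixed locus of $\iota_1$, where the equivariance $M(\iota_1 x)=D\,M(x)\,D$ pushes $M$ into block form. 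At the isolated fixed point $[1:0:0]$ this makes $M$ a general block matrix, so generically $\det M\neq0$ there (were $[1:0:0]$ on the sextic, its $\iota_1$-even tangent cone would have even order and the point would automatically be singular), and along the fixed line $\{x_1=0\}$ one checks transversality. Since $\{\det M\}$ is not a linear system in the coefficients of $F$, I expect the cleanest completion is to exhibit one explicit invariant $F$ whose discriminant is smooth and then invoke openness of smoothness; this explicit check, together with ruling out forced singularities along the fixed locus of $\iota_1$, is the genuine obstacle.
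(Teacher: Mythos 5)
Your argument is essentially the paper's own proof carried out in full: the paper disposes of this lemma with the single sentence that it is ``a straightforward computation using the description of the symmetric Verra threefolds,'' and your two dual matrix presentations $M(x)$, $N(y)$ with the parity bookkeeping --- the equivariance $M(\iota_1 x)=D\,M(x)\,D$ in case (3), and the vanishing $N_{12}=N_{13}=0$ giving the conic--quartic splitting in cases (1)--(2) --- is exactly that computation, made explicit. Two remarks on your reading. First, you correctly worked from the index sets $A$ rather than from the displayed involutions; the paper's two lists are in fact internally mismatched (an involution negating $y_1$ forces $j_1$ even, not $i_1$ even), and the Lemma as stated is the version consistent with the $A$'s, i.e. with your derivation. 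Second, in case (1) the matrix $M$ does \emph{not} get pushed into block form on the fixed locus: every entry $M_{ab}$ is itself even in $x_1$, so $M$ is plainly $\iota_1$-invariant and no extra degeneration appears along $\{x_1=0\}$; the block-diagonal behaviour on the fixed line occurs only in case (3).

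The smoothness step you flag as the remaining obstacle is indeed part of the claim (the sentence preceding the Lemma makes clear it concerns the general member of each family), but it closes exactly as you propose, in either of two ways. Intrinsically: since the Verra threefold is a conic bundle over each $\PP^2$ factor, the discriminant is singular at a point only where the fibre conic has rank $\leq 1$ (or where the threefold itself is singular); rank $\leq 1$ is codimension $3$ in the space of conics, hence generically avoided on the two-dimensional base away from the fixed locus, while on the fixed line in case (3), where $M$ is block diagonal, it amounts to the two conditions $M_{11}=0$ and vanishing of the determinant of the $2\times 2$ block on a one-dimensional locus, so it is again generically empty; and your observation that the fixed point $[1{:}0{:}0]$ would be a \emph{forced} singular point of the sextic if it lay on it (by evenness of the local expansion) is correct, and is averted generically because $M([1{:}0{:}0])$ is the block-diagonal matrix of $x_1^2$-coefficients, generically invertible. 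Alternatively, the explicit member you ask for is supplied later in the paper itself: Proposition \ref{invV} in Section \ref{symmetric two torsion} constructs, from any smooth symmetric plane sextic and a symmetric $2$-torsion class, a smooth symmetric Verra threefold whose discriminant is exactly that sextic, and openness of smoothness within the invariant family then completes your argument.
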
\begin{proof}This is a straightforward computation using the description of the symmetric Verra threefolds. 
\end{proof}

Let us denote the family of symmetric Verra fourfolds in (1) by $\mathcal{V}_1$ and the one in (3) by $\mathcal{V}_2$.

\begin{prop}\label{families-dimension}
The families $\mathcal{V}_1$, $\mathcal{V}_2$ represent 11 dimensional subsets of the moduli space of linear isomorphism classes of $(2,2)$ divisors on $\mathbb{P}^2\times \mathbb{P}^2$ 
\end{prop}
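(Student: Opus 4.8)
The plan is to present each family as the image, inside the moduli space
$\mathcal{M}:=\PP\big(H^0(\PP^2\times\PP^2,\oo(2,2))\big)/\!\!/(\mathrm{PGL}_3\times\mathrm{PGL}_3)$
of linear isomorphism classes, of the projectivised space of $\iota$-invariant $(2,2)$-forms, and then to compute its dimension as the dimension of that projectivised invariant system minus the dimension of the subgroup of $\mathrm{PGL}_3\times\mathrm{PGL}_3$ preserving it. Since $h^0(\oo(2,2))=6\cdot 6=36$ and $\dim(\mathrm{PGL}_3\times\mathrm{PGL}_3)=16$, and since a general divisor has finite stabiliser, the ambient space $\mathcal{M}$ is $35-16=19$-dimensional (consistent with the $19$-dimensional family of Verra fourfolds); the task is to cut out an $11$-dimensional locus inside it.

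First I would read off the dimensions of the invariant linear systems from the sets $A$. Under an involution of the shape $\mathrm{diag}(-1,1,1)$ the six-dimensional space of quadratic forms in three variables splits into a $4$-dimensional invariant part $H^+$ and a $2$-dimensional anti-invariant part $H^-$. For $\mathcal{V}_1$ (case $(1)$), where $\iota$ is non-trivial on a single factor, the invariant $(2,2)$-forms are the tensor product of $H^+$ on that factor with the full six-dimensional space on the other, of dimension $4\cdot 6=24$. For $\mathcal{V}_2$ (case $(3)$), where $\iota$ is non-trivial on both factors, the invariant forms are $(H^+\otimes H^+)\oplus(H^-\otimes H^-)$, of dimension $4\cdot 4+2\cdot 2=20$. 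These match $|A|$ in the two cases, so the projectivised invariant systems have dimensions $23$ and $19$ respectively.

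Next I would identify the relevant subgroup. Because the representation of $\mathrm{PGL}_3\times\mathrm{PGL}_3$ on $\Sym^2 U_1^\vee\otimes\Sym^2 U_2^\vee$ is faithful, an element $g$ carries the $\iota$-invariant subspace $W$ onto the $(g\iota g^{-1})$-invariant subspace; hence the stabiliser of $W$ in $\mathrm{PGL}_3\times\mathrm{PGL}_3$ is exactly the centraliser $Z(\iota)$ (which coincides with the normaliser, as $\iota$ has order two). The centraliser of $\mathrm{diag}(-1,1,1)$ in $\mathrm{PGL}_3$ is the image of the block subgroup $\mathrm{GL}_1\times\mathrm{GL}_2$, of dimension $4$. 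Therefore $\dim Z(\iota)=8+4=12$ for $\mathcal{V}_1$ and $\dim Z(\iota)=4+4=8$ for $\mathcal{V}_2$, and the naive orbit count gives $23-12=11$ and $19-8=11$.

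The \emph{hard part} will be to make this count rigorous, i.e.\ to show that for a general invariant form $F\in W$ the generic fibre of the map $\PP(W)\dasharrow\mathcal{M}$ is precisely a $Z(\iota)$-orbit of the expected dimension $\dim Z(\iota)$. This needs two genericity statements: that $\mathrm{Stab}_{\mathrm{PGL}_3\times\mathrm{PGL}_3}(F)$ is finite, so that $Z(\iota)$ acts on $W$ with finite generic stabiliser and full-dimensional orbits; and that this stabiliser equals $\langle\iota\rangle$ for general $F$, so that $(\mathrm{PGL}_3\times\mathrm{PGL}_3)\cdot F\cap\PP(W)=Z(\iota)\cdot F$ and no accidental linear isomorphism identifies two general members of $W$ outside $Z(\iota)$. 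I would prove the finiteness infinitesimally, by checking that the Lie-algebra map $\mathfrak{g}_{Z(\iota)}\to W$, $\xi\mapsto\xi\cdot F$, is injective for general $F$ (equivalently, that no nonzero vector field in the centraliser annihilates a general invariant form); and I would obtain the sharper equality $\mathrm{Stab}(F)=\langle\iota\rangle$ by exhibiting a single explicit symmetric form with exactly this stabiliser and invoking upper semicontinuity of $|\mathrm{Stab}|$, using that $\langle\iota\rangle\subseteq\mathrm{Stab}(F)$ for every $F\in W$. Granting these, the image of each $\mathcal{V}_i$ in $\mathcal{M}$ is exactly $11$-dimensional, as claimed.
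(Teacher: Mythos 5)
Your proposal is correct and takes essentially the same route as the paper: the paper's proof is exactly this parameter count, computing the dimension of the space of $\iota$-invariant $(2,2)$-forms and subtracting the dimension of the subgroup of automorphisms of $\mathbb{P}^2\times\mathbb{P}^2$ commuting with the involution, written there as $24-9-5+1=11$ and $20-5-5+1=11$, which agrees with your $23-12=11$ and $19-8=11$. The genericity statements you single out as the hard part (finite generic stabiliser equal to $\langle\iota\rangle$, so that fibres of $\PP(W)\dasharrow\mathcal{M}$ are single centraliser orbits) are simply left implicit in the paper's one-line argument, so your sketch of how to justify them is a compatible refinement rather than a different approach.
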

\begin{proof}
The proof is a simple parameter count. We compute the dimension of the space of polynomials of bi-degree $(2,2) $ invariant with respect to the chosen involution and subtract the dimension of the group of automorphisms of $\mathbb{P}^2\times \mathbb{P}^2$ fixing the involution. We get 24-9-5+1=11 in the first two cases and 20-5-5+1 in the third case.
\end{proof}
\begin{prop}For the families $(1)$ and $(2)$ the quadric fibration of the Verra fourfold over $\mathbb{P}^2$ with discriminant a smooth sextic admits a section.
\end{prop}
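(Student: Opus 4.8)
The plan is to translate the existence of a rational section of the smooth-sextic quadric fibration $\pi\colon V\to\PP^2$ into the triviality of its even Clifford (Brauer) class, and then to obtain that triviality from the fact that, for families $(1)$ and $(2)$, the branch equation is invariant under an involution acting only on the base of $\pi$. I write $x=(x_1,x_2,x_3)$ for the coordinates on the base $\PP^2$ of $\pi$ and $y=(y_1,y_2,y_3)$ for the fibre coordinates, and (as forced by Lemma \ref{discr} and the shape of the invariant forms) I take $\iota$ to act on the base by $x_1\mapsto -x_1$. Invariance then makes the defining $(2,2)$-form even in $x_1$, so that
\[
f(x,y)=x_1^2\,P(y)+x_2^2\,Q(y)+x_2x_3\,R(y)+x_3^2\,S(y),
\]
with $P,Q,R,S$ conics in $y$, and the fibre of $\pi$ over $x$ is the quadric surface $\{w^2=f(x,y)\}\subset\PP^3$. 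Its generic fibre is a smooth quadric surface over $K:=\C(\PP^2)$ whose discriminant extension is exactly $\C(S_1)$; hence $\pi$ admits a rational section if and only if this quadric is $K$-isotropic, i.e. if and only if the even Clifford invariant $\beta_1\in\Br(S_1)$ vanishes.

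First I would use the evenness in $x_1$ to descend the whole quadric bundle. The form $f$ depends on the base only through $x_1^2,x_2,x_3$, so $\pi$ is the pullback, under the degree-two quotient map $\rho\colon\PP^2\to\bar P:=\PP(2,1,1)$, $[x_1:x_2:x_3]\mapsto[x_1^2:x_2:x_3]$, of a quadric surface bundle $\bar\pi\colon\bar V\to\bar P$. The discriminant divisor of $\pi$ (the smooth sextic) equals $\rho^{-1}(\bar C)$, where $\bar C$ is the discriminant of $\bar\pi$, so the K3 surface $S_1\to\PP^2$ sits in a cartesian square over the discriminant double cover $h\colon\bar S\to\bar P$ branched along $\bar C$. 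Since even Clifford invariants are compatible with pullback, this gives $\beta_1=g^*\bar\beta$, where $g\colon S_1\to\bar S$ and $\bar\beta\in\Br(\bar S)$ is the Clifford class of $\bar\pi$.

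The key point I would then establish is that $\bar S$ is a rational surface. Indeed $\bar C$ has weighted degree $6$ in $\bar P=\PP(2,1,1)$, whose canonical class is $\oo(-4)$; the double cover branched along $\bar C$ therefore has $K_{\bar S}=h^*\oo(-1)$, so $-K_{\bar S}$ is the pullback of an ample class along the finite map $h$ and is ample. Thus $\bar S$ is a (weighted) del Pezzo surface, hence rational, and the Brauer group of a smooth projective model of a rational surface vanishes, giving $\bar\beta=0$ and consequently $\beta_1=g^*\bar\beta=0$. This mechanism uses decisively that a single base coordinate can be separated off, so that $f$ is even in $x_1$ and descends to $\PP(2,1,1)$: for the symmetric family $(3)$, where only the diagonal combination $(x_1,y_1)\mapsto(-x_1,-y_1)$ is an involution, no such descent exists and $\beta_1$ is genuinely non-trivial, in agreement with the $U(2)\oplus E_8(-2)$ versus $U\oplus E_8(-2)$ dichotomy for the Picard lattices recorded later.

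The hard part will be controlling the singularities introduced by the quotient: $\bar P=\PP(2,1,1)$ has a quotient singularity at the image of the $\iota$-fixed point $[1:0:0]$, and $\bar S$ may acquire singularities along $\bar C$ and over this point. I would therefore have to justify both that the even Clifford sheaf descends to an Azumaya algebra on a resolution $\widetilde S$ of $\bar S$, and that $\Br(\widetilde S)=0$ may be read off from rationality despite these singularities, checking that $\beta_1$ remains the pullback of the corresponding unramified class. Once this descent is set up cleanly, the rationality of $\bar S$ does the rest and the section is produced.
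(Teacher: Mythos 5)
Your strategy never touches the paper's actual argument, which is a two-line geometric observation: by Lemma \ref{discr}, for families $(1)$ and $(2)$ the \emph{other} projection of $V$ has reducible discriminant (a conic union a quartic); over a singular point of that reducible sextic the fibre is a corank-two quadric, i.e.\ a union of two planes, and each such plane $P\subset V$ is mapped isomorphically by the smooth-discriminant projection onto its base $\PP^2$, hence is a holomorphic section. Note also that you invert the paper's logical flow: in the proof of Theorem \ref{main} the authors use the section just constructed to deduce $\beta_1=0$ via \cite{KPS} and \cite{ABBV}, whereas you try to prove $\beta_1=0$ first and then extract a section. That direction is legitimate in principle (for a rank-four form with non-trivial discriminant, isotropy over $K$ is equivalent to the splitting of the even Clifford algebra over the discriminant extension), but two genuine problems remain.

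First, the central step of your plan --- that the descended Clifford class $\bar\beta$ is unramified on a smooth projective model of $\bar S$ --- is precisely what you leave unproven, by your own admission. It can in fact be closed, and more easily than you fear, because $\bar S$ is not singular: the double cover of $\PP(2,1,1)$ branched along $\bar C$ is the quotient $S_1/\tilde\jmath$ by the antisymplectic lift $\tilde\jmath$ of the base involution, whose fixed locus is the genus-two curve over the fixed line; $\tilde\jmath$ \emph{swaps} the two points of $S_1$ over the centre $[1{:}0{:}0]$, so locally at the orbifold point the map $\bar S\to \bar P$ is the smoothing cover $\C^2\to\C^2/\pm$, and $\bar S$ is the smooth degree-one del Pezzo surface of \cite{vGS} already used in Section \ref{symmetric two torsion}. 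With $\bar C$ smooth (it avoids the centre, and for a general member $C_1$ meets the fixed line transversally) and the degeneration simple, the even Clifford algebra of \cite{Kuz} yields an Azumaya algebra on $\bar S$ away from the single point over the orbifold point, purity on the smooth surface $\bar S$ extends the class, and $\Br(\bar S)=0$ gives $\bar\beta=0$, hence $\beta_1=0$. You did not see this, and without it your argument is a programme, not a proof. Second, even once completed, your route only yields a \emph{rational} section: isotropy of the generic fibre does not by itself produce a regular section, and resolving the indeterminacy gives a section of a blown-up family, not of $\pi$ itself. The paper's planes are honest holomorphic sections, and it is these (not merely rational ones) that are invoked later in the proof of Theorem \ref{main}; for the rationality Corollary a rational section would suffice, but the statement as proved in the paper is stronger. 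In short: your approach is a genuinely different, Brauer-theoretic descent argument that is completable and has independent interest (it also explains structurally why family $(3)$ behaves differently), but as written it has an admitted gap at its key step and proves a weaker conclusion than the paper's elementary degeneration argument, which exploits the reducible discriminant of the other fibration --- something your proposal never uses.
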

\begin{proof}Observe that the fiber of the second projection over the singular point of the reducible sextic is a reducible quadric. Each of its components, which are planes, defines a section of the original projection.
\end{proof}

\begin{cor}
The Verra fourfolds in families $(1)$ and $(2)$ are rational.
\end{cor}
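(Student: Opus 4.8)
The plan is to combine the section constructed in the preceding proposition with the classical fact that a quadric fibration carrying a rational section over a rational base is rational. First I would pass to the generic fiber. Let $\pi\colon V\to \PP^2$ be the quadric fibration whose discriminant is the smooth sextic, and set $K=\C(\PP^2)$. Since the discriminant is a proper closed subset of $\PP^2$, the generic point of the base does not lie on it, so the generic fiber $V_\eta$ is a \emph{smooth} quadric surface in $\PP^3_K$. The section produced in the preceding proposition determines a $K$-rational point $p\in V_\eta$; as $V_\eta$ is smooth, $p$ is automatically a smooth point of it.

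Next I would invoke the rationality of a pointed quadric. Projection away from $p$ (equivalently, the pencil of lines in $\PP^3_K$ through $p$, each meeting $V_\eta$ in $p$ and one further point) yields a birational map $V_\eta \dasharrow \PP^2_K$ defined over $K$. Spreading this out over the base, $V$ is birational over $\PP^2$ to a variety whose generic fiber is $\PP^2_K$, so that $\C(V)\cong K(s,t)$ for two independent transcendentals $s,t$. Since $K=\C(\PP^2)$ is itself purely transcendental of transcendence degree $2$ over $\C$, we get that $\C(V)$ is purely transcendental of transcendence degree $4$, i.e. $V$ is rational. By Lemma \ref{discr} this argument applies verbatim to both family $(1)$ (projecting to $(x_1,x_2,x_3)$) and family $(2)$ (projecting to $(y_1,y_2,y_3)$), which is exactly the content of the corollary.

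The only delicate point—and the main, though mild, obstacle—is to be sure that the section of the preceding proposition really cuts out a \emph{smooth} $K$-point of $V_\eta$, rather than degenerating into the singular locus of the special (non-generic) fibers, which would spoil the projection. Here this causes no trouble: the generic fiber is smooth, so every one of its points, in particular the one determined by the section, is smooth, and the stereographic projection is a genuine birational equivalence over $K$.
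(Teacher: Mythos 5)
Your proposal is correct and matches the paper's argument: the paper simply invokes the well-known fact that a quadric surface bundle over a rational base with a section is rational, and your stereographic-projection-over-the-generic-fiber argument is precisely the standard proof of that fact, applied to the section from the preceding proposition. Your added care about the generic fiber being a smooth quadric (so the section's $K$-point is smooth) is exactly the right point to check, and it goes through since the discriminant sextic is a proper closed subset of $\PP^2$.
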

\begin{proof}
It is well-known that quadric surface bundles are rational when they admit a section.
\end{proof}

\begin{rem}
The problem of rationality of Verra fourfolds is analagous to that for cubic fourfolds; these two cases are the only exceptions to the criterion given by Schreieder in his recent preprint \cite[Corollary 2]{Schreieder}.
\end{rem}
Let us now come back to the study of IHS manifolds. We keep our notation where $V$ is a general symmetric Verra fourfold, we know that the variety $X_{V}$ being the base of the natural $\mathbb{P}^1$ fibration on the Hilbert scheme of $(1,1)$ conics on $V$ is an IHS manifold, more precisely a double EPW quartic section. Let us write down how the existence of the involution on $V$ influences the Picard lattice of $X_{V}$.

\begin{prop}\label{prop:e8_lagr}
Let $X_{V}$ be the double EPW quartic section associated to a general symmetric Verra fourfold $V$ in one of the three families above. 
Then $X_{V}$ admits a non symplectic involution which 
leaves the Picard lattice invariant; moreover, $\Pic(X_{V})$ is an over-lattice of $U(2)\oplus E_8(-2)$ which contains primitively $U(2)$ and $E_8(-2)$.
\end{prop}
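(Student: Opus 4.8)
The plan is to exploit the three commuting involutions naturally carried by $X_V$ and to decide their action on the holomorphic two-form by transferring the question to the associated $K3$ surface $S_1$.

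First I would produce the involution. The symmetric structure gives (two lifts of) a linear involution $\iota$ of $V$ which, being compatible with the $\PP^9$-embedding and with the two quadric-bundle structures, sends $(1,1)$ conics to $(1,1)$ conics; hence $\iota$ acts on $F(V)$ preserving the $\PP^1$-fibration $F(V)\to X_V$ of Section \ref{EPW quartic} and descends to an involution $i\in\Aut(X_V)$. The double EPW quartic $X_V\to Y_V$ carries its covering involution $\tau$, which is anti-symplectic with invariant lattice $H^2(X_V,\ZZ)^\tau\cong U(2)$ by \cite{IKKR}. Since $\iota$ preserves the polarization $h$ (with $h^2=4$) and the whole quartic-section datum, $i$ commutes with $\tau$, and together they generate a group $(\ZZ/2)^2$ whose third nontrivial element is $\sigma:=i\circ\tau$.

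The key step is to decide which of $i,\sigma$ is symplectic, and for this I would pass to the $K3$ surface. By Theorem \ref{moduli-twisted} (and the expected isomorphism, which holds for these lattices since the fourfolds admit no other birational model) we have $X_V\cong M_{(0,H,0)}(S_1,\beta_1)$, with $S_1\to\PP^2$ the double cover branched along the smooth symmetric sextic $C_1$; under this identification $\tau$ corresponds to the deck involution $\theta$ of $S_1\to\PP^2$, which is anti-symplectic. The symmetric structure lifts $\iota$ to two involutions $\psi,\psi\theta$ of $(S_1,\beta_1)$, and by the functoriality of the construction of Section \ref{derived} together with the induced-automorphism formalism of Section \ref{induced} (and the Remark of Section \ref{ssec:twist}) these induce on $M_{(0,H,0)}(S_1,\beta_1)$ the involution $i$ (from $\psi$) and $\sigma$ (from $\psi\theta$), with $\theta$ inducing $\tau$. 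Writing $S_1=\{w^2=f_6\}$ with $f_6$ invariant under $\iota$, a direct residue computation of the action on $\omega_{S_1}$ shows that $\psi\theta$ is symplectic while $\psi$ is anti-symplectic, so that $\sigma$ is symplectic and $i=\sigma\tau$ is non-symplectic. I expect this transfer to be the main obstacle: the sign of $i^\ast\omega_{X_V}$ cannot be read off on $X_V$ directly, so everything hinges on correctly transporting the involution through the twisted derived equivalence to an explicit lift on $S_1$ whose action on $\omega_{S_1}$ is computable.

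It remains to assemble $\Pic(X_V)$. Since $\sigma$ is a symplectic involution on a manifold of $K3^{[2]}$-type, its anti-invariant lattice is $H^2(X_V,\ZZ)_\sigma\cong E_8(-2)$ by \cite{m_invol}, and being orthogonal to $\omega_{X_V}$ it lies in $\Pic(X_V)$; likewise $U(2)=H^2(X_V,\ZZ)^\tau\subseteq\Pic(X_V)$, and both summands are primitive as invariant/anti-invariant sublattices. Because $\sigma$ commutes with $\tau$ it preserves $U(2)=H^2(X_V,\ZZ)^\tau$ and fixes its generators — the polarization $h$ and the class of the ramification divisor of $X_V\to Y_V$ — so $\sigma$ acts trivially on $U(2)$, giving $U(2)\perp E_8(-2)$. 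A short eigenvalue computation with $i=\sigma\tau$ then yields $U(2)\oplus E_8(-2)\subseteq H^2(X_V,\ZZ)^i\subseteq\Pic(X_V)$, the last inclusion because $H^2(X_V,\ZZ)^i\perp\omega_{X_V}$. Finally, Proposition \ref{families-dimension} together with the period description of Section \ref{moduli} (the relevant period domain is eleven-dimensional) forces $\Pic(X_V)$ to have rank exactly $10$ for general $V$; hence the two saturated rank-$10$ lattices $H^2(X_V,\ZZ)^i$ and $\Pic(X_V)$ coincide with the common overlattice of $U(2)\oplus E_8(-2)$, which contains each summand primitively and on which $i$ acts trivially.
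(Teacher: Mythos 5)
Your proposal has the same skeleton as the paper's proof: lift $\iota$ to the Hilbert scheme of $(1,1)$ conics, descend to $X_V$, observe commutation with the covering involution so as to get a Klein four-group, feed the symplectic member into \cite[Thm.~5.2]{m_invol} to obtain $E_8(-2)$, take $U(2)$ as the invariant lattice of the covering involution, and finish with primitivity of (anti-)invariant sublattices plus the dimension count of Proposition \ref{families-dimension}. The one genuinely different step is how you decide which involutions are symplectic, and it is also your weak point. The paper does this with a one-line character argument: on $\{1,\mathbf{i},\mathbf{j},\mathbf{i}\mathbf{j}\}\cong(\ZZ/2\ZZ)^2$ the eigenvalues on $\omega_{X_V}$ multiply to $+1$, and since the covering involution acts by $-1$, exactly one of the two lifts of $\iota$ is symplectic; moreover nothing downstream requires knowing \emph{which} one, because both lifts descend to the same involution of $Y_V$ and hence both act trivially on $U(2)$. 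Your detour through $M_{(0,H,0)}(S_1,\beta_1)$ does correctly compute the signs of the two lifts on $\omega_{S_1}$ via the residue form on $w^2=f_6$, but transporting this back to $X_V$ rests on two things neither you nor the paper establishes: (i) equivariance of the functor $\Xi$ of Section \ref{derived} with respect to the lifted involutions (the Clifford data $\mathcal{B}_0$, $\mathcal{E}$ must be chosen equivariantly and the birational map shown to intertwine the actions), and (ii) your upgrade from birational to biregular ``since the fourfolds admit no other birational model,'' which presupposes the Picard lattice you are in the middle of computing --- a circularity; a $\iota$-equivariant birational map would in fact suffice to read off the sign on $\omega$, but even that equivariance is asserted, not proved. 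So the step you rightly flag as the main obstacle is avoidable, and should simply be replaced by the character argument.

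One smaller error: you justify that $\sigma$ acts trivially on $U(2)$ because it fixes ``the polarization $h$ and the class of the ramification divisor of $X_V\to Y_V$.'' The double cover $X_V\to Y_V$ is branched along a surface, not a divisor, so no such ramification class exists; the natural generators of $U(2)$ are the two isotropic pullbacks $h_1,h_2$ from the two factors of $\PP^2\times\PP^2$, with $h=h_1+h_2$. The correct justification --- the paper's --- is that both lifts descend to an involution of $Y_V$ preserving each projection (none of the three families swaps the two $\PP^2$ factors), hence fix $h_1$ and $h_2$ separately. With these two repairs your argument goes through and coincides with the paper's; note also that your concluding rank-$10$ count, exactly like the paper's at this point, implicitly assumes the classifying map of the $11$-dimensional family of Verra fourfolds to moduli is generically finite, a fact the paper only verifies later, in Section \ref{step}.
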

\begin{proof} 
Each involutions  of the Verra fourfold $V$ corresponding to type (1),(2) and (3), as above, induce a natural involution on the Hilbert scheme of $(1,1)$ conics
on $V$.  Indeed, such an involution preserves the $\PP^1$ fibration and it descends to an involution on $X_{V}$. Furthermore, since the latter involution is induced by an involution on $\mathbb{P}^2\times \mathbb{P}^2$, it commutes with the covering involution $\mathbf{i}$ of $X_V$. It follows that we have three involutions on $X_V$ (note that they correspond to three involutions on the Verra fourfold).
Moreover, since $\mathbf{i}$ is non symplectic, we infer that one of the remaining two involutions is a symplectic involution. Let us denote the symplectic involution by $\mathbf{j}$, the third involution is then $\mathbf{i}\circ\mathbf{j}$.  It is 
well 
known (see \cite[Thm. 5.2]{m_invol}), that $\mathbf{i}$ acts as $-1$ on a lattice 
isometric to $E_8(-2)\subset \Pic(X_V)$ and preserves its orthogonal, whereas $\mathbf{j}$ 
preserves $U(2)\subset \Pic(X_V)$ and acts as $-1$ on its orthogonal. Let us 
consider the non symplectic involution $\mathbf{i}\circ \mathbf{j}$. 

As $\mathbf{j}$ comes from 
an involution on $Y_V$ preserving the projection to $\mathbb{P}^2\times 
\mathbb{P}^2$, it must fix the lattice fixed by $\mathbf{i}$, hence $\mathbf{j}\circ\mathbf{i}$ 
fixes $U(2)\oplus E_8(-2)$ and, by the generality assumption on $V$ and the dimension count of Prop. \ref{families-dimension}, we infer that
$\Pic(X_V)$ is an overlattice of $U(2)\oplus E_8(-2)$. Furthermore, $U(2)$ and $E_8(-2)$ have to be primitive sublattices of $\Pic(X_V)$, since they are the invariant, respectively the co-invariant, sublattice of a non symplectic, respectively symplectic, involution.
\end{proof}

\begin{rem}\label{overlattices}
Every overlattice of $U(2)\oplus E_8(-2)$ that contains primitively $U(2)$ and $E_8(-2)$ is isometric to $$U(2)\oplus E_8(-2),\ \ U\oplus E_8(-2) \ \ \text{or} \ \ U(2)\oplus D_4(-1)^{\oplus 2}.$$
The IHS associated to the first two lattices will be discussed in the next section.
We expect that the IHS fourfolds of $K3^{[2]}$ type corresponding to the third family are $4:1$ covers of 
a quadric section of $C(\PP^2\times \PP^2)\subset \PP^9$.
\end{rem}

\subsection{Symmetric two torsion in the Jacobian of symmetric sextic curves in $\PP^2$}\label{symmetric two torsion}
The aim of this section is to prove that we can obtain a general symmetric sextic as the discriminant locus of a symmetric Verra threefold.
The strategy is to start from a symmetric sextic curve and to construct symmetric Verra fourfolds from it.
The result will be proven in Section \ref{step}.  

First, let us describe our construction of symmetric Verra threefolds from the point of view of \cite{B}. 
 We start with a 
symmetric sextic $s\colon C_1\to \PP^2$ which does not pass through the center of
a given symmetry $j$ in $\PP^2$. Then we consider $\eta$ a symmetric $2$-torsion element 
in the Jacobian $J(C_1)$. Note that from \cite{B}, there are two types of such elements, either they are induced 
from the quotient curve or not (see Section \ref{121}). 
We take $\eta$ symmetric and show that the minimal free resolution of $s_{\ast}(\eta(2))$ gives 
rise to a $3\times 3$ matrix of quadratic forms on $\PP^2$ (\cite{C}). This matrix produces a Verra fourfold, and
thus also an IHS fourfold by Section \ref{EPW quartic}.

 Let us be more precise.
Consider a plane $\mathbb{P}^2$ with involution $j:\mathbb{P}^2\to \mathbb{P}^2$.
Let $\mathcal{C}$ be the family of smooth sextic curves in $ \mathbb{P}^2$ defined by sextic equations invariant (not just equivariant) with respect to $j$. Let $C_1\in \mathcal{C}$ and $S_1$ be the K3 surface obtained as the double cover $\pi:S_1\to\mathbb{P}^2$ branched in $C_1$. Note that under our assumption the involution $j$ lifts to an involution $\tilde{j}:S_1\to S_1$. These symmetric K3 surfaces $S_1$ were studied in \cite[\S 3.2]{vGS}.

\subsubsection{Symmetric $2$-torsion elements in $J(C_1)$}\label{121}
By \cite{vGS}, the quotient of $S_1$ by $\tilde{j}$ is a del Pezzo surface $D$ of degree 1. Let now $\mathcal{W}\subset \operatorname{Pic} C_1$ be the set of 2-torsion elements invariant with respect to $\tilde{j}$. 
Let us first prove the following.
\begin{prop}\label{ala} The following hold:
\begin{enumerate}
\item The set $\mathcal{W}$ has $2^{12}$ elements.
\item The set $\mathcal{V}= \{\eta\in W : H^0(\eta(1))\neq 0\}$ has at most 120 elements. 
\end{enumerate}
\end{prop}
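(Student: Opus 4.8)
The plan is to reduce both statements to the study of the quotient curve $C':=C_1/\sigma$, where $\sigma=j|_{C_1}$ is the involution induced by $j$, together with the associated double cover $\pi\colon C_1\to C'$. I would first record the numerical data. A smooth plane sextic has genus $g(C_1)=10$, so $J(C_1)[2]\cong H^1(C_1,\mathbb{F}_2)\cong\mathbb{F}_2^{20}$ and $\mathcal{W}=J(C_1)[2]^{\sigma}$. Since $C_1$ avoids the isolated fixed point (the ``center'') of $j$, the fixed locus of $\sigma$ is exactly $C_1\cap\ell$, where $\ell$ is the fixed line of $j$; by B\'ezout this consists of $r=6$ points (transverse for general $C_1$), and Riemann--Hurwitz then gives $g(C')=4$.

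For part (1), I would compute $\dim_{\mathbb{F}_2}\mathcal{W}$ via the $\mathbb{F}_2[\mathbb{Z}/2]$-module structure of $H^1(C_1,\mathbb{F}_2)$. Over $\mathbb{F}_2$ this module is a sum of trivial and free indecomposables, $H^1\cong\mathbb{F}_2^{\,a}\oplus\mathbb{F}_2[\mathbb{Z}/2]^{\,c}$ with $a+2c=20$ and $\dim_{\mathbb{F}_2}\mathcal{W}=a+c$. The number $c$ of free summands is pinned down by Smith theory: comparing $\sum_i\dim H^i(C_1,\mathbb{F}_2)=2g+2=22$ with $\sum_i\dim H^i(\mathrm{Fix}(\sigma),\mathbb{F}_2)=r=6$ gives $r=(2g+2)-2c$, i.e. $c=g+1-r/2=8$; hence $a=4$ and $\dim_{\mathbb{F}_2}\mathcal{W}=a+c=12$, so $|\mathcal{W}|=2^{12}$. (Equivalently, $\dim_{\mathbb{F}_2}\mathcal{W}=2g(C')+r-2$, which one may cross-check against the decomposition of $\mathcal{W}$ into $\pi^{*}J(C')[2]$ and the $2^{\,r-2}=2^4$ ``branch'' classes.)

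For part (2), the geometric key is that the branch divisor is canonical on $C'$. From $\pi^{*}K_{C'}\cong K_{C_1}\otimes\mathcal{O}_{C_1}(-R)\cong\mathcal{O}_{C_1}(3)\otimes\mathcal{O}_{C_1}(-1)\cong\mathcal{O}_{C_1}(2)\cong\pi^{*}\mathcal{O}_{C'}(B)$, using $K_{C_1}\cong\mathcal{O}_{C_1}(3)$ and $R\in|\mathcal{O}_{C_1}(1)|$, together with the injectivity of $\pi^{*}$ on $\Pic^0$ (the cover is ramified), one gets $K_{C'}\cong\mathcal{O}_{C'}(B)$. Writing the cover as $\mathcal{L}^{\otimes 2}\cong\mathcal{O}_{C'}(B)$, this says $\mathcal{L}$ is a theta characteristic on the genus-$4$ curve $C'$, and moreover $\mathcal{O}_{C_1}(1)\cong\pi^{*}\mathcal{L}$ (both descend and are square roots of $\mathcal{O}_{C_1}(2)$). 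Thus for $\eta\in\mathcal{W}$ we have $\eta(1)=\eta\otimes\pi^{*}\mathcal{L}$, a $\sigma$-invariant square root of $\pi^{*}K_{C'}$; pushing forward with $\pi_{*}\mathcal{O}_{C_1}\cong\mathcal{O}_{C'}\oplus\mathcal{L}^{-1}$, the condition $H^0(\eta(1))\neq0$ is governed by the effectivity of the theta characteristic $M$ on $C'$ with $\eta(1)\cong\pi^{*}M$. The bound then comes from the classical count: a smooth genus-$4$ curve has exactly $2^{g-1}(2^{g}-1)=2^{3}\cdot 15=120$ odd (hence effective) theta characteristics, and I would finish by showing $\eta\mapsto M$ is injective with image in this set.

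The main obstacle is precisely the parity bookkeeping that makes the target the $120$ \emph{odd} theta characteristics rather than the a priori larger set of all effective ones. Concretely, I would split $H^0(\eta(1))$ into its $\sigma$-invariant and anti-invariant parts and analyze their vanishing at the six ramification points $q_i$: a suitably normalized anti-invariant section must vanish at every $q_i$, which by a degree count forces $\eta(1)\cong\mathcal{O}(R)$. This is exactly the dichotomy of Beauville \cite{B} between symmetric $2$-torsion induced from $C'$ and the ``branch'' classes, and it is what must be exploited both to discard the non-descending classes and to deal with the even vanishing theta-null $\mathcal{L}$ itself (note $h^0(\mathcal{L})=2$, forced by $h^0(\mathcal{O}_{C_1}(1))=3$), so that only the odd theta characteristics survive. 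Making this precise, via Mumford's theory of theta characteristics and parity on double covers, is the crux of the argument.
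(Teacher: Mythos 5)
Your part (1) is correct: the Smith-theoretic computation ($H^1(C_1,\mathbb{F}_2)\cong\mathbb{F}_2^{\,a}\oplus\mathbb{F}_2[\mathbb{Z}/2]^{\,c}$ with $a=r-2=4$, $c=2g(C')=8$, hence $\dim_{\mathbb{F}_2}\mathcal{W}=12$) is sound and is a genuinely different route from the paper, which instead invokes the exact sequence of Beauville's Lemma 2 relating $(\Pic(C_1)[2])^{\sigma}$ to $\Pic(C')[2]$ and the branch points; both yield $2g(C')+r-2=12$, and your topological argument is arguably more self-contained.

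For part (2), however, there is a genuine gap, and it sits exactly where you flag "the crux." Your identification $\eta(1)\cong\pi^{*}M$ with $M$ a theta characteristic on the genus-$4$ quotient $C'$ is valid only for the $2^{8}$ classes $\eta\in\pi^{*}J(C')[2]$; for the remaining $2^{12}-2^{8}$ "branch" classes, $\eta(1)$ does not descend, and your proposed mechanism — an eigensection vanishing at \emph{every} ramification point $q_i$, killed by a degree count — simply does not apply to them. Concretely, writing a branch class as $\eta=\mathcal{O}_{C_1}(\sum_{i\in S}q_i)\otimes\pi^{*}N$ with $S$ even, $|S|\in\{2,4\}$, an eigensection of $\eta(1)$ vanishes only at the $q_i$ with $i\in S$ (or $i\in S^{c}$, depending on the eigenvalue); dividing out that divisor gives $h^{0}(\eta(1))=h^{0}(F)$ where $F=N\otimes\mathcal{L}$ has \emph{positive} degree $3-|S|/2$ and $F^{\otimes2}\cong K_{C'}(-\sum_{i\in S}\bar p_i)$. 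Whether some such square root is effective is a real geometric question (a hyperplane section of the canonical curve through the branch points of $S$ that is doubly tangent elsewhere), not a formal degree count. Nor is this a cosmetic case: since odd theta characteristics are always effective, your descending classes already contribute exactly $120$ nontrivial elements of $\mathcal{V}$ (the $120$ odd $M=\epsilon\otimes\mathcal{L}$), so a single effective branch class would violate the bound — ruling them out is the entire content of the estimate on your route, and it is left unproved. By contrast, the paper's proof bypasses the dichotomy altogether: each $\eta\in\mathcal{V}$ yields a conic totally tangent to $C_1$, whose preimage on the K3 $S_1$ splits into two $(-2)$-curves mapping to $(-1)$-curves on the degree-one del Pezzo $S_1/\tilde{j}$, and $240/2=120$ bounds everything, descending or not. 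Two smaller points: your even/odd bookkeeping in the descending case can indeed be closed, but by a fact you only gesture at — a genus-$4$ curve lies on a unique quadric, so it carries at most one vanishing thetanull, which here is $\mathcal{L}$ itself, corresponding to $\eta=\mathcal{O}_{C_1}$; and $\mathcal{O}_{C_1}(1)\cong\pi^{*}\mathcal{L}$ does not follow from "both are square roots of $\mathcal{O}_{C_1}(2)$" (square roots differ by $2$-torsion) — use instead the canonical isomorphism $\mathcal{O}_{C_1}(R)\cong\pi^{*}\mathcal{L}$ together with $R\in|\mathcal{O}_{C_1}(1)|$.
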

\begin{proof}
For the first part, the symmetry $j$ induces a $2:1$ map $\varphi: C_1\to B$ branched at $6$ points (the intersection of $C_1$ with the fixed line), where $B$ is a curve of genus $4$.
It follows from \cite[Lem.2]{Beauville1} that we have an exact sequence:
\begin{equation}\label{beau} 0 \to \Pic(B)[2]\to(\Pic(C_1)[2])^i \to (\ZZ/2\ZZ)^5\xrightarrow{f} \Pic(B)/2\Pic(B)
\end{equation}
such that the kernel of $f$ is isomorphic to $\ZZ/2\ZZ$.
Thus $(\Pic(C_1)[2])^i$ is a vector space of dimension $12$ over $\ZZ/2Z$.
The image of $\Pic(B)[2]$ forms a subspace of dimension $8$ so we have two sets of non-trivial $2$-torsion elements on
$C_1$ of cardinalities $2^8-1$ and $2^{12}-2^8-1$.

To prove the second part we follow \cite{vGS}. Let us take $\eta \in \mathcal{W}$ such that $H^0(\eta(1))\neq 0$. Let $\xi \in H^0(\eta(1))$, then $\xi^2 \in H^0(\mathcal{O}_{C_1}(2))$. Since we know that the restriction map
$H^0(\mathcal{O}_{\mathbb{P}^2}(2))\to H^0(\mathcal{O}_{C_1}(2))$ is surjective, it follows that there exists a conic $c\subset \mathbb{P}^2$ such that its intersection with $C_1$ is a double divisor. We deduce that the preimage $\pi^{-1}(c)$ decomposes as a union of two curves of self intersection $-2$. Each of these curves must be symmetric with respect to $\tilde{j}$ i.e. they map to $-1$ curves on the del Pezzo surface $D$. The number of $-1$ curves on $D$ is $240$ hence the number of conics $c$ is $120$, which implies the assertion.
\end{proof}

Consider now $\eta\in \mathcal{W}_0=\mathcal{W}\setminus \mathcal{V}$, then by \cite[\S 3]{Vprym} we know that $\eta(2)$
has the following resolution:
$$ 0\to 3\mathcal{O}_{\mathbb{P}^2}(-2)\xrightarrow{\alpha_\eta}  3\mathcal{O}_{\mathbb{P}^2}\to \eta(2)\to 0,$$
with $\alpha_{\eta}$ symmetric. In particular $ \alpha_{\eta}$ defines a $(2,2)$ divisor on $\mathbb{P}^2 \times \mathbb{P}^2$ . We define $V'_{\eta}$ and $V_{\eta}$ as the corresponding Verra threefold and fourfold respectively.
\begin{prop}\label{invV} The fourfold $V_{\eta}$ is a smooth fourfold with projection 
$V_{\eta}\to \mathbb{P}^2$ being a quadric fibration with discriminant curve $C_1$. Furthermore, 
$V_{\eta}$ is invariant with respect to some involution on $\mathbb{P}^2\times \mathbb{P}^2$.
Moreover, the elements from $J(C_1)_2$ induced from the quotient curve give rise to Verra fourfolds from $\V_1$ and the rest give rise to Verra fourfolds in $\V_2$.
\end{prop}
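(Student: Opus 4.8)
The plan is to treat the three assertions in turn, the first two being fairly direct consequences of the determinantal description, while the last reduces to a Lefschetz-type eigenvalue computation on $C_1$.

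For the first assertion I would read the geometry off the symmetric matrix $\alpha_\eta=(\alpha_{ij})$, whose entries lie in $H^0(\mathbb{P}^2,\oo(2))$. The threefold is $V'_\eta=\{\sum_{i,j}\alpha_{ij}(x)\,y_iy_j=0\}\subset\mathbb{P}^2\times\mathbb{P}^2$ and $V_\eta$ is its double cover; composing with the projection to the factor carrying the $x$-coordinates exhibits $V_\eta$ as a quadric surface bundle, the fibre over $x$ being the double cover of $\mathbb{P}^2$ branched along the conic $\{\sum_{i,j}\alpha_{ij}(x)y_iy_j=0\}$. Such a fibre is singular exactly when that conic degenerates, i.e.\ over $\{\det\alpha_\eta=0\}$, which is the support of $\coker\alpha_\eta=\eta(2)$, hence equal to $C_1$. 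Since $\eta\in\mathcal{W}_0$ gives $H^0(\eta(1))=0$, Riemann--Roch on the genus-$10$ curve $C_1$ (with $\deg\eta(2)=12$ and $K_{C_1}=\oo_{C_1}(3)$) yields $H^1(\eta(2))=0$ and $h^0(\eta(2))=3$, so $\eta(2)$ is an honest line bundle on $C_1$ and $\alpha_\eta$ drops rank by exactly one there. A corank-one degeneration over the smooth sextic $C_1$ has smooth total space, which proves smoothness of $V_\eta$ (and explains why the case $\eta\in\mathcal{V}$ would instead produce the quadratic singularities of Theorem \ref{main}(2)).

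For the second assertion the key point is equivariance. As $\eta$ is $\tilde{j}$-invariant, $\eta(2)$ admits a $j$-linearization, so its minimal free resolution $0\to 3\oo(-2)\xrightarrow{\alpha_\eta}3\oo\to\eta(2)\to0$ is $j$-equivariant and unique up to equivariant isomorphism. Thus $j$ acts on both copies of $\mathbb{C}^3$ and $\alpha_\eta$ intertwines these actions, producing an involution $\iota=(j,\iota_y)$ of $\mathbb{P}^2\times\mathbb{P}^2$ under which $V'_\eta$, and hence $V_\eta$, is invariant.

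For the third assertion I would first note that, since $j$ acts nontrivially on the base $\mathbb{P}^2$ carrying $C_1$, Lemma \ref{discr} shows that $V_\eta$ lies in $\V_1$ precisely when $\iota_y$ is projectively trivial (the other projection being then a conic plus a quartic) and in $\V_2$ when $\iota_y$ is projectively nontrivial (both projections smooth sextics). By the identification of the previous step, $\iota_y$ is the involution induced by $\tilde{j}$ on the target $H^0(C_1,\eta(2))\cong\mathbb{C}^3$, so everything comes down to deciding whether $\tilde{j}$ acts on these three sections with a single eigenvalue or with mixed eigenvalues. I would compute the trace $L:=\mathrm{tr}\big(\tilde{j}^\ast\mid H^0(\eta(2))\big)=\dim H^0(\eta(2))_+-\dim H^0(\eta(2))_-$ (legitimate because $H^1(\eta(2))=0$) via the holomorphic Lefschetz fixed point formula. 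The fixed locus of $\tilde{j}$ on $C_1$ is $C_1\cap\{x_1=0\}$, the six branch points of $\varphi\colon C_1\to B$, and at each of them $d\tilde{j}=-1$ contributes a denominator $2$, so $L=\tfrac12\sum_p\epsilon_p$ with $\epsilon_p=\pm1$ the local linearization eigenvalues. Since $h^0(\eta(2))=3$ is odd, $L$ is an odd integer, forcing $\sum_p\epsilon_p\in\{\pm2,\pm6\}$: either all six $\epsilon_p$ coincide, whence $L=\pm3$, $\iota_y$ is scalar, and $V_\eta\in\V_1$; or they split four-to-two, whence $L=\pm1$, $\iota_y$ is nonscalar, and $V_\eta\in\V_2$. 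By Beauville's exact sequence recalled in Proposition \ref{ala} (cf.\ \cite{Beauville1}), the invariant classes induced from $B$ are exactly those whose local invariants at all six branch points agree, which matches the all-$\epsilon_p$-equal case, the remaining classes giving the four-to-two split; this is exactly the claimed dichotomy.

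The main obstacle is this final identification: matching the local eigenvalues $\epsilon_p$ of the chosen $\tilde{j}$-linearization of $\eta(2)$ with Beauville's local type invariants cutting out the kernel $\varphi^\ast\Pic(B)[2]$. I would handle it by using that $C_1$ avoids the isolated fixed point of $j$, so at each branch point the twist $\oo(2)$ contributes $\epsilon_p(\oo(2))=+1$ and the computation reduces to the equivariant structure of $\eta$ alone; the pullback linearization on $\varphi^\ast\beta$ then has equal local eigenvalues, whereas a non-induced class necessarily carries a nontrivial local pattern (cf.\ \cite{Vprym}). Carrying out this comparison carefully, and checking that the two notions of ``local type'' genuinely coincide, is where the real work lies.
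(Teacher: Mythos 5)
Your proposal is correct, and for the first two assertions it is essentially the paper's own argument in more explicit clothing: the paper likewise gets smoothness from the fact that the discriminant of the quadric fibration is the support of $\eta(2)$, i.e.\ the smooth sextic $C_1$ (stated even more tersely than you do, without the Riemann--Roch check that the degeneration has corank one — your computation $H^1(\eta(2))=0$, $h^0(\eta(2))=3$ from $H^0(\eta(1))=0$ is a useful amplification); and it proves equivariance by exactly your mechanism, only done by hand: it lifts the identity $\eta(2)\to j^*\eta(2)$ to maps $\beta,\gamma$ between the resolutions, uses the absence of maps $3\oo_{\PP^2}\to 3\oo_{\PP^2}(-2)$ to get uniqueness of the lift and hence $\beta\circ\beta=\id$, and uses symmetry of $\alpha_\eta$ to get $\gamma^{-1}=\beta^T$ and invariance of the $(2,2)$ equation under $(j,\bar\beta)$; your "the linearization exists, so the minimal resolution is equivariant and unique up to equivariant isomorphism" packages the same steps. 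The genuine divergence is in the third assertion. The paper settles it in one line by citing Beauville \cite[Prop.~1]{B}: the lift $\beta$ is trivial iff $\eta$ is pulled back from the quotient curve, a non-induced invariant class acting nontrivially on sections. You instead derive this dichotomy from scratch via the holomorphic Lefschetz fixed point formula: with $H^1(\eta(2))=0$ and $h^0=3$ odd, the trace is $L=\tfrac12\sum_p\epsilon_p\in\{\pm 3,\pm 1\}$, and $|L|=3$ (scalar action, type (1)/(2), family $\mathcal{V}_1$) versus $|L|=1$ (nonscalar, type (3), family $\mathcal{V}_2$) corresponds to all six local eigenvalues equal versus a four--two split; the matching of the $\epsilon_p$-pattern with Beauville's local invariants, whose kernel is exactly $\varphi^*\Pic(B)[2]$ by exactness of the sequence (\ref{beau}) already used in Proposition \ref{ala}, is closed by elementary descent (if all $\epsilon_p$ are equal, normalize the sign so the stabilizer actions are trivial, whence $\eta$ descends to a $2$-torsion class on $B$ because $\varphi^*$ is injective for a ramified double cover; conversely the canonical linearization of a pullback has all local eigenvalues $+1$, and $\oo(2)$ contributes trivially precisely because $C_1$ avoids the isolated fixed point of $j$). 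What your route buys is self-containedness, and on one point it is actually sharper than the paper: since the linearization of $\eta(2)$ is only defined up to a global sign, the correct dichotomy is "$\beta$ scalar versus nonscalar," which your $L=\pm 3$ captures cleanly, whereas the paper's "$\beta$ is the identity" glosses over the sign ambiguity.

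Two small caveats. First, your parenthetical attributing singular fourfolds to $\eta\in\mathcal{V}$ and to Theorem \ref{main}(2) is off: the $D_4$-singular case of Theorem \ref{main}(2) arises in Section \ref{sect:D4} from a different degeneration (Lagrangian spaces $A$ with $\PP(A)$ meeting $G(3,W)$ in two points, discriminant sextics acquiring a triple point), not from $H^0(\eta(1))\neq 0$; this aside should be dropped or qualified, though it does not affect the proof. Second, when $\iota_y$ is projectively trivial the involution you construct is of type (2) — nontrivial on the base carrying $C_1$, trivial on the fibre factor — which belongs to $\mathcal{V}_1$ only after swapping the two $\PP^2$ factors; this is harmless since types (1) and (2) give the same family up to the swap, but it is worth stating, and it also explains the apparent mismatch with the literal labelling of the projections in Lemma \ref{discr}.
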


\begin{proof}Clearly $V_{\eta}$ is fibered by quadrics with discriminant locus being the support of $\eta(2)$ i.e.~$C_1$. Since $C_1$ is smooth, it follows that $V'_{\eta}$ is also smooth and as a consequence so is $V_{\eta}$. For its invariance under the involution, consider the resolution:
\begin{equation}\label{eq1}0\to  3\mathcal{O}_{\mathbb{P}^2}(-2)\xrightarrow{\alpha}  3\mathcal{O}_{\mathbb{P}^2}\to \eta(2)\to 0,\end{equation}
and apply $j^*$ to get 
$$0\to  3\mathcal{O}_{\mathbb{P}^2}(-2)\xrightarrow{j^*\alpha}  3\mathcal{O}_{\mathbb{P}^2}\to j^*\eta(2)\to 0,$$
now the identity map $\eta(2)\to j^* \eta(2)=\eta(2)$ lifts to a map between the resolutions. Denote the corresponding maps
$3\mathcal{O}_{\mathbb{P}^2}\to 3\mathcal{O}_{\mathbb{P}^2}$ by $\beta$ and  $3\mathcal{O}
_{\mathbb{P}^2}(-2) \to  3\mathcal{O}_{\mathbb{P}^2}(-2)$ by $\gamma$. Note that $\beta$ is the identity iff 
$\eta$ is coming from the quotient curve. Indeed it follows from \cite[Prop.~1]{B} it act nontrivially on the sections. Since both $\alpha$ and $j^* \alpha$ are symmetric maps we 
infer $\gamma^{-1}= \beta^T$. It is now enough to observe that by applying $j^*$ to the whole diagram we 
obtain that $\beta\circ \beta$ 
is a lift of $\mathrm{id}: \eta(2) \to \eta(2)$. But, since there are no maps $3\mathcal{O}_{\mathbb{P}^2}\to 3\mathcal{O}_{\mathbb{P}^2}(-2)$, the lift of the map  to the resolution is unique. Hence $\beta$ is either an involution  $3\mathcal{O}_{\mathbb{P}^2}\to 3\mathcal{O}_{\mathbb{P}^2}$ or the identity. Note that $\beta$ is in fact equivalent to an involution $\bar{\beta}$ on $H^0( 3\mathcal{O}_{\mathbb{P}^2})$. Now 
observe that we proved that $\alpha=\beta\circ j^*\alpha \circ \beta^T$, which implies that the $(2,2)$ equation defining $V'_{\eta}$ is invariant under the involution $( j,\bar{\beta})$ on $\mathbb{P}^2\times \mathbb{P}^2$. It follows that the involution $( j,\bar{\beta})$ lifts to  $V_{\eta}'$.
\end{proof}

\begin{rem} 
Note that, from \cite[Thm 1.1]{IOOV}, for a smooth
sextic curve $C_1$ we have a natural epimorphism of groups:
\begin{equation}\label{brau}( \frac{\Pic (C_1)}{K_{C_1}})[2] \xrightarrow{\psi} \Br S_1[2] \to 0,\end{equation}
where the group $( \frac{\Pic (C_1)}{K_{C_1}})[2]$ consists of the 2-torsion classes in the 
Jacobian $J(C_1)$ and the theta characteristics and $S_1$ is the double cover of $\PP^2$ branched along $C_1$. 
 One can prove that the symmetric two torsion elements from $J(C_1)_2$, which are induced from the quotient curve, are mapped to the trivial Brauer class by $\psi$.
We expect that the images through $\psi$ (cf.~Equation \ref{brau}) of symmetric two torsion points in the Jacobian $J(C_1)_2$ that are not induced from the quotient curve (cf.~\cite{B} and Equation \ref{beau})  induce non trivial Brauer classes.
\end{rem}

\begin{rem} Note that in \cite{Vprym} Verra considered two torsion elements in the Jacobian of a sextic from $\mathcal{W}-\mathcal{V}$. He shows that such elements are naturally related to quartic double solids with one node. From this point of view it is natural to try to generalize our construction of  double EPW quartics from Section \ref{EPW quartic} in the context of quartic double solids.
\end{rem}

\subsection{Fixed loci of non symplectic involutions and Enriques surfaces}\label{fixed}
Recall that Verra fourfolds are naturally associated to Lagrangian spaces in $\wedge^3 W$ with $W=U_1\oplus U_2$ such that $\mathbb{P}(A)$ meets the Grassmannian $G(3,W)$ only in the point $[U_1]$.
Let us describe  the Lagrangian subspaces defining in this way elements of the considered families of Verra threefolds. 
Let $A\subset \wedge^3 W$ be a Lagrangian space such that $V_{A}$ is a Verra fourfold in one of our families.  Denote by $U_1\subset W$ the space
corresponding to the point  of intersection $\mathbb{P}(A)\cap G(3,W)$ i.e.~$[\wedge^3 U_1]=\mathbb{P}(A)\cap G(3,V)$. Choose $U_2$ in such a way that 
$Q_{A,U_2}$ is a quadric symmetric with respect to the symmetry on $\mathbb{P}(T_{[U_1]})$ induced by the symmetry on the Verra fourfold.

Note that there is an involution $\kappa$ on the vector space $W=U_1\oplus U_2$ inducing  the involution $\iota$ on $U_1\otimes \wedge^2 U_2$.  Let $W^+$ and $W^-$ 
denote the invariant and anti-invariant sub-vector spaces.  Note that in all our cases we can assume $W^-$ is of dimension four whereas $W^+$ is of dimension two.
Finally, let $\tilde{\iota}$ be the involution induced by $\kappa$ on   $\wedge^3 U_1\oplus (\wedge^2U_1\otimes U_2)$.
We observe that $A$ must be symmetric with respect to the involution on $ \wedge^3 W$ induced by $\kappa$ since $Q_A$ is symmetric. Note that $p=[U_1]=A_{\eta}\cap G(3,W)$ is also invariant with respect to the involution.

\subsubsection{Fixed points computations}

Let $X_{A}$ be the IHS fourfold associated to the symmetric Verra fourfold $V_{A}$. Then $X_{A}$ admits three involutions: the covering involution $\mathbf{i}$ and two involutions induced by $\tilde{\iota}$ (which acts on $\wedge^3U_1\oplus( \wedge^2 U_1\otimes U_2$)). Let us call the latter $\mathbf{j}$ and $\mathbf{j}\circ \mathbf{i}$, where $\mathbf{j}$ is the symplectic involution among the two. 
\begin{prop}\label{fixed pts prop}
 The fixed locus of $\mathbf{j}$ in all cases consists of 28 points and a K3 surface isomorphic to the double cover of a del Pezzo surface $D_4$ of degree 4 branched in some curve $B\in |-2K_{D_4}|$.

Furthermore one of the following holds:
\begin{enumerate} 
\item $V_{A}\in \mathcal{V}_1$ and the fixed locus of  $\mathbf{i}\circ \mathbf{j}$ is the union of an abelian surface and a surface obtained as a double cover of a 12 nodal surface $S$ being a $(4,2)$ divisor in $\mathbb{P}^1\times \mathbb{P}^2$ branched in its nodes and in a curve  intersection of $S$ with a $(0,2)$ divisor in $\mathbb{P}^1\times \mathbb{P}^2$.
 
\item $V_{A}\in \mathcal{V}_2$  and the fixed locus of  $\mathbf{i}\circ \mathbf{j}$ is a surface $K$ obtained as a double cover of a 4 nodal del Pezzo surface $D$ of degree 4 branched over  its nodes and in a curve in the system $|-2K_D|$ i.e. K is an Enriques surface.
\end{enumerate}

\end{prop}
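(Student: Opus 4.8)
The plan is to handle the symplectic involution $\mathbf{j}$ and the non symplectic involution $\mathbf{i}\circ\mathbf{j}$ separately, and in each case to first fix the \emph{numerical} shape of the fixed locus by Lefschetz-type formulas and only then to recognise the geometry through the explicit description of $V_A$ and of the action $\tilde{\iota}$ on $\wedge^3U_1\oplus(\wedge^2U_1\otimes U_2)$. The numerics will be uniform across the families (the symplectic involutions, and separately the non symplectic ones of the same lattice type, are all deformation equivalent), so the family-dependence in the statement must come entirely from the explicit geometry.

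First I would treat $\mathbf{j}$. Being symplectic, its fixed locus is a smooth symplectic subvariety, hence a disjoint union of surfaces (each carrying the restricted form) and isolated points. The action on $H^2(X,\Z)$ has invariant part of rank $15$ and co-invariant $E_8(-2)$ of rank $8$, so trace $7$; using the $\mathbf{j}^\ast$-equivariant isomorphism $H^4(X)\cong \mathrm{Sym}^2 H^2(X)$, the $+1$-eigenspace on $H^4$ has dimension $\binom{16}{2}+\binom{9}{2}=156$ and the $-1$-eigenspace $15\cdot 8=120$, giving trace $36$ on $H^4$. The topological Lefschetz number is therefore $2+2\cdot 7+36=52$. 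Since by \cite{m_invol} a symplectic involution on a manifold of $K3^{[2]}$-type has as fixed locus a K3 surface together with $28$ isolated points, the count $e=24+28=52$ confirms this shape. To identify the K3 surface I would trace through the moduli interpretation of $X_V$: fixed points of $\mathbf{j}$ correspond to $\tilde{\iota}$-invariant pencils of $(1,1)$ conics on $V_A$, the invariant part of the quadric-bundle structure singling out a degree $4$ del Pezzo $D_4$, while the branch locus of the covering involution $\mathbf{i}$ cuts out a member $B\in|-2K_{D_4}|$; by adjunction the resulting double cover has trivial canonical, so it is the asserted K3.

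Next I would treat $\mathbf{i}\circ\mathbf{j}$, which is non symplectic with invariant lattice $U(2)\oplus E_8(-2)$ (ranks $10$ and $13$), so its fixed locus is a union of (Lagrangian) surfaces. The same computation now gives trace $10-13=-3$ on $H^2$ and $\binom{11}{2}+\binom{14}{2}-10\cdot 13=55+91-130=16$ on $H^4$, hence Lefschetz number $2+2\cdot(-3)+16=12$, so $e(\mathrm{Fix}(\mathbf{i}\circ\mathbf{j}))=12$. To separate the families I would analyse the fixed locus of $\tilde{\iota}$ using the splitting $W=W^+\oplus W^-$ with $\dim W^-=4$, $\dim W^+=2$: in case $\mathcal{V}_1$ (where $\iota_1=\mathrm{id}$) the fixed locus of $\iota$ on $\PP^2\times\PP^2$ has a component $\PP^2\times\PP^1$ which, intersected with the $(2,2)$ divisor and the quadric, produces the $12$-nodal $(4,2)$ divisor in $\PP^1\times\PP^2$ together with its double cover, and a second component yielding the abelian surface, consistently with $e=0+12$; in case $\mathcal{V}_2$ (type $(3)$) the invariant geometry collapses to a single component, the double cover of a $4$-nodal degree $4$ del Pezzo branched over its nodes and a curve in $|-2K|$, an Enriques surface with $e=12$ and $\chi(\mathcal{O})=1$.

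The hard part will be precisely this geometric identification of the fixed surfaces, not the numerics: one must follow the invariant parts of the quadric-bundle and EPW constructions closely enough to pin down the abelian surface in $\mathcal{V}_1$ and, in $\mathcal{V}_2$, to verify that the Enriques surface is \emph{non-nodal}, which is the subtle point underlying the paper's claim about non-nodal Enriques surfaces. As a cross-check I would use the holomorphic Lefschetz formula for the non symplectic involution (whose holomorphic Lefschetz number equals $1$, as $\mathbf{i}\circ\mathbf{j}$ acts by $-1$ on $H^{0,2}$ and by $+1$ on $H^{0,4}$) to constrain $\chi(\mathcal{O})$ of each component, namely $0$ for the abelian surface and $1$ for the Enriques surface, confirming that the surfaces have been correctly recognised.
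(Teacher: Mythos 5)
Your numerical scaffolding is correct and is a genuinely cheaper route to the \emph{shape} of $\mathrm{Fix}(\mathbf{j})$: the traces ($7$ on $H^2$, $36$ on $H^4\cong\Sym^2H^2$, giving $L(\mathbf{j})=52=24+28$; $-3$ and $16$ for $\mathbf{i}\circ\mathbf{j}$, giving $L=12$; holomorphic Lefschetz number $1$) all check out, and invoking \cite{m_invol} for ``$28$ points plus a K3'' is legitimate, whereas the paper re-derives this by hand (in its case analysis the $28$ points appear as $16+12$, resp.\ $24+4$). But Lefschetz numbers only constrain Euler characteristics, and the actual content of the proposition is the explicit identification of the fixed surfaces and branch data: the K3 as a double cover of a degree-$4$ del Pezzo branched in $B\in|-2K_{D_4}|$, the $12$-nodal $(4,2)$ divisor in $\PP^1\times\PP^2$, the abelian surface, and the $4$-nodal del Pezzo yielding the Enriques surface. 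You explicitly defer all of this as ``the hard part,'' so as it stands the proposal proves only consistency, not the statement.

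More seriously, the mechanism you sketch for that hard part would fail: you propose to locate the fixed surfaces by intersecting the fixed locus of $\iota$ on $\PP^2\times\PP^2$ with the $(2,2)$ divisor and the quadric, i.e.\ inside the Verra fourfold itself. But the fixed surfaces of $\mathbf{j}$ and $\mathbf{i}\circ\mathbf{j}$ live on $X_A$, which parametrizes pencils of $(1,1)$ conics, and the paper necessarily works on the EPW-quartic side: it computes $\mathrm{Fix}(\tilde\iota)$ on $Y_A=D^2_A\cap C_p\subset G(3,W)$, whose ambient fixed locus has three components $G_1\cong\PP^3$, $G_2\cong\PP^3$, $G_3\cong\PP(W^+)\times G(2,W^-)$, splits the restricted tangent bundle equivariantly as $\mathcal{T}_i^+\oplus\mathcal{T}_i^-$, and identifies the fixed components as Lagrangian degeneracy loci --- e.g.\ $K_1=D^1_{A^-,\mathcal{T}_3^-}$ of bidegree $(0,2)$ and $K_2=D^1_{A^+,\mathcal{T}_3^+}$ of bidegree $(4,2)$ in $\PP^1\times\PP^2$, and the Kummer quartic $D^1_{A^+,\mathcal{T}_1^+}\subset G_1$ whose double cover is the abelian surface --- with the branch and node data ($16[pt]$, $2h_1^3$, $12h_1^2h_2$) coming from the formula $[D^2_{A',\mathcal{T}'}]=c_1(\mathcal{T}')c_2(\mathcal{T}')-2c_3(\mathcal{T}')$ applied to $D^3_A\cap C_p$. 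The paper then distributes components between the two lifts using that $\mathrm{Fix}(\mathbf{j})\cup\mathrm{Fix}(\mathbf{i}\circ\mathbf{j})$ is the full preimage of $\mathrm{Fix}(\tilde\iota)$, that their intersection lies over the branch locus $D^3_A\cap C_p$, and that non-symplectic involutions fix only smooth surfaces while symplectic ones fix points and surfaces; none of this lifting argument appears in your proposal, yet it is what forces, e.g., the $24+4$ point count and places the $4$ nodes on the Enriques surface. Two smaller points: the invariant lattice of $\mathbf{i}\circ\mathbf{j}$ for $V_A\in\mathcal{V}_1$ is $U\oplus E_8(-2)$, not $U(2)\oplus E_8(-2)$ (same rank $10$, so your trace computation survives); and which overlattice occurs is established in the paper only \emph{after}, and partly by means of, this fixed-locus comparison, so assuming the lattice to separate the families risks circularity. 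Finally, non-nodality of the Enriques surface is not needed for the proposition as stated.
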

\begin{proof}
Let $Y_A$ be the EPW quartic associated to $A$. We shall first study the involution $\tilde{\iota}$ restricted to $Y_A$. 
Recall that $$Y_A=D^2_A\cap C_p\subset G(3,W)$$ where $C_p=G(3,W)\cap T_p$ 
with 
$T_p$ the projective tangent space to $G(3,W)$ at $p=[U_1]$. 
Let us denote by $H^+:=W^+\wedge \bigwedge^2 W^-$ and $H^-:=\wedge^3 W^- \oplus 
W^-\wedge \textstyle \bigwedge^2 V^+$ the eigenspaces of the action of $\kappa$ 
on $\wedge^3 W$. 
Let us now observe that the locus of fixed points of the action induced by $\kappa$ on 
$\mathbb{P}(\wedge^3 W)$ is 
$$ \mathbb{P}(H^+) \cup \mathbb{P} (H^-).$$ If we restrict to 
$G(3,W)$ we have three components of fixed points:
\begin{enumerate}
 \item $G_1:=\mathbb{P}(\wedge^3 W^-)\simeq \mathbb{P}^3$
 \item $G_2:=\mathbb{P}(W^-\wedge \textstyle \bigwedge^2 W^+)\simeq 
\mathbb{P}^3$
 \item $G_3:=\mathbb{P}^1\times G(2,4)\simeq \mathbb{P}(W^+)\times G(2,W^-) 
\subset \mathbb{P}(H^+).$
\end{enumerate}

Furthermore, for $i=1,2,3$ the restriction $\mathcal{T}_i$  of the projective tangent 
bundle $\mathcal{T}$ which is a subbundle of the trivial bundle $\wedge^3 W$ to $G_i$  
decomposes as a direct sum of
$$(\mathcal{T}_i \cap H^+)\oplus (\mathcal{T}_i \cap H^-)=:\mathcal{T}_i ^+\oplus \mathcal{T}_i^-$$ which 
gives respectively:
\begin{enumerate}
 \item $\mathcal{T}_1^-=4\mathcal{O}_{\mathbb{P}^3}$ and $\mathcal{T}_1^+=2( \wedge^2 
T_{\mathbb{P}^3} (-1))$
 \item $\mathcal{T}_2^-=4\mathcal{O}_{\mathbb{P}^3}$ and $\mathcal{T}_2^+=2( \wedge^2 
T_{\mathbb{P}^3} (-1) )$
 \item If we denote by $\pi_1$ and $\pi_2$ the projections of $\mathbb{P}(W^+)\times G(2,W^-)$ into the factors and 
by $\mathcal{U}_{G(2,W^-)}$ and 
 $\mathcal{Q}_{G(2,W^-)}$ the universal subbundle and quotient bundle then 
$$\mathcal{T}_3^-=\pi_2^* \mathcal{U}_{G(2,W^-)}\oplus \pi_2^* 
\mathcal{Q}^{\vee}_{G(2,W^-)}$$ and $\mathcal{T}_3^+$
 is the projective tangent bundle to
 $\mathbb{P}(W^+)\times G(2,W^-)$ in its Segre embedding i.e we have an exact 
sequence 
 $$0\to \pi_1^* \mathcal{O}_{\mathbb{P}(W^+)}(-1)\oplus \pi_2^* T_{G(2,W^-)}^{\vee} 
\to  (\mathcal{T}_3^+)^{\vee}\to \pi_1^*\mathcal{O}_{\mathbb{P}(W^+)}(1)\oplus 
\pi_1^*\mathcal{O}_{G(2,W^-)}(1).$$
\end{enumerate}

We now have three possibilities:
\begin{enumerate}
 \item $p\in G_1$ then $C_p\cap \mathbb{P}(H^-)=G_1=\mathbb{P}^3$ and $C_p\cap 
\mathbb{P}(H^+)=\mathbb{P}^1\times \mathbb{P}^2\subset G_3 $
 \item $p\in G_2$ then $C_p\cap \mathbb{P}(H^-)=G_2=\mathbb{P}^3$ and $C_p\cap 
\mathbb{P}(H^+)=\mathbb{P}^1\times \mathbb{P}^2 $
 \item $p\in G_3$ then $C_p\cap \mathbb{P}(H^-)=\mathbb{P}^1 \cup \mathbb{P}^1\subset G_1\cup G_2$,  
$C_p\cap \mathbb{P}(H^+)=c((\mathbb{P}^1\times\mathbb{P}^1)\cup \{pt\})
 $ 
\end{enumerate}
In each of these cases we determine the fixed locus of the involution $\tilde{\iota}$ on $Y_A=D^2_A\cap C_p$. For 
that note that $D^2_A\cap G_i=D^2_{A^+,\mathcal{T}_i^+}\cup D^2_{A^-,\mathcal{T}_i^-}\cup 
(D^1_{A^-,\mathcal{T}_i^-}
\cap D^1_{A^+,\mathcal{T}^+})$. We have the following possibilities:

\begin{enumerate}
 \item $p\in G_1$ then $G_1\subset D^1_{A^-,\mathcal{T}_1^-}$, whereas $ 
D^2_{A^-,\mathcal{T}_1^-}=\emptyset$ and the fixed locus on $Y_A$ is the union of a 
quartic hypersurface 
 $D^2_A \cap G_2=D^1_{A^+,\mathcal{T}_1^+}$ and two divisors $K_1=D^1_{A^-,\mathcal{T}_3^-}$ 
and $K_2=D^1_{A^+,\mathcal{T}_3^+}$ of bidegrees $(0,2)$ and $(4,2)$ in 
$\mathbb{P}^1\times \mathbb{P}^2 \subset G_3$ respectively.
 \item $p\in G_2$ then $G_1\subset D^1_{A^-,\mathcal{T}_1^-}$, whereas $ 
D^2_{A^-,\mathcal{T}_1^-}=\emptyset$ and the fixed locus on $Y_A$ is the union of a 
quartic hypersurface 
 $D^2_A \cap G_2=D^1_{A^+,\mathcal{T}_2^+}$ and two divisors $K_1=D^1_{A^-,\mathcal{T}_3^-}$ 
and $K_2=D^1_{A^+,\mathcal{T}_3^+}$ of bidegrees $(0,2)$ and $(4,2)$ in 
$\mathbb{P}^1\times \mathbb{P}^2 \subset G_3$ respectively.
 \item $p\in G_3$ then the fixed locus on $Y_A$ consists of 12 points (4 on each line) and two surfaces $J_1$ and $J_2$
 being quadric sections of $c(\mathbb{P}^1\times\mathbb{P}^1)$ corresponding to $D^1_{A^-,\mathcal{T}_3^-}$ and $D^2_{A^+,\mathcal{T}_3^+}$.
\end{enumerate}
We know that the involution $\tilde{\iota}$ on $Y_A$ lifts to both involutions $\mathbf{j}$ and $\mathbf{i}\circ\mathbf{j}$ on $X_A$. Note now that the preimage on $X_A$ of $D^3_A\cap C_p$ is contained in the fixed loci of both involutions $\mathbf{j}$ and $\mathbf{i}\circ\mathbf{j}$, furthermore non symplectic involutions
admit only smooth surfaces of fixed points whereas symplectic involutions admit isolated fixed points and smooth surfaces. 

Let us now determine $D^3_A\cap C_p$ in all three cases. 
For that observe that $$D^3_A\cap C_p\cap G_i=(D^2_{A^+,\mathcal{T}_i^+}\cap D^1_{A^-,\mathcal{T}_i^-}) \cup (D^1_{A^+,\mathcal{T}_i^+}\cap D^2_{A^-,\mathcal{T}_i^-}) \cup D^3_{A^-,\mathcal{T}_i^-} \cup D^3_{A^+,\mathcal{T}_i^+}$$
 recall that the class of a second Lagrangian degeneracy locus is computed by the formula:
$$D^2_{A',\mathcal{T}'}=c_1(\mathcal{T}')c_2(\mathcal{T}')-2c_3(\mathcal{T}').$$

We get 
$[D^2_{A^+,\mathcal{T}_1^+}]=16 [pt]$, $ [D^2_{A^+,\mathcal{T}_2^+}]=16 [pt]$, $[D^3_A\cap G_2]= [D^2_{A^+,\mathcal{T}_2^+}]=16 [pt]$, 
$[D^2_{A^-,\mathcal{T}_3^-}]=2h_1^3$, furthermore $[D^2_{A^+,\mathcal{T}_3^+}]=12h_1^2h_2$, where $h_1$ and $h_2$ are the pullbacks of the respective hyperplane sections of $G(2,4)$ and $\mathbb{P}^1$ to the product $G(2,4)\times \mathbb{P}^1$.
Since we know that the involution $\tilde{\iota}$ lifts to a symplectic involution $\mathbf{j}$ on $X$ the only possibilities for the fixed point locus of $\mathbf{j}$ on $X_A$ are:
\begin{enumerate}
\item $p\in G_1$  the fixed locus consists of preimages of 16 branch points on $G_1$ together with the preimage of 12 branch points (their class is computed as $12h_1^2h_2$ restricted to $\mathbb{P}^1\times \mathbb{P}^2\subset G(2,4)$) in $K_2$ and the K3 surface being the covering of $K_1$ branched in the intersection curve of $K_2\cap K_1$. 
\item $p\in G_2$  the fixed locus consists of preimages of 16 branch points on $G_2$ together with the preimage of 12 branch points (their class is computed as $12h_1^2h_2$ restricted to $\mathbb{P}^1\times \mathbb{P}^2\subset G(2,4)$) in $K_1$ and the K3 surface being the covering of $K_2$ branched in the intersection curve of $K_2\cap K_1$. 
 \item $p\in G_3$ the fixed locus consists of the preimages of the 12 isolated fixed points on $Y_A$ which are not in the branch locus, the preimage of 4 branch points in $J_1$ ($[D^2_{A^-,\mathcal{T}_3^-}]=2h_1^3$
 restricted to a special hyperplane section of one fiber) and the K3 surface being the double cover of $J_2$ branched in the curve of intersection with $J_1$. 
\end{enumerate}

Consider now the fixed points of the involution $\mathbf{i}\circ \mathbf{j}$. Note that the intersection of the fixed loci of $\mathbf{j}$ and $\mathbf{i}\circ \mathbf{j}$ is equal to the intersection of the fixed locus of $\mathbf{i}$ with the preimage of the fixed locus of involution on $Y_A$. Moreover the union of these loci is equal to the preimage of the fixed locus of $\tilde{\iota}$ on $Y_A$. We conclude that the fixed locus of $\mathbf{i}\circ \mathbf{j}$ is one of the following:
\begin{enumerate}
\item $p\in G_1$  the fixed locus consists of the preimage of the Kummer quartic  on $G_1$ which is an abelian surface together with the preimage of $K_2$. The latter preimage is a double cover of $K_2$ branched in the intersection $K_2\cap K_1$ and its 12 nodes.
\item$p\in G_2$  the fixed locus consists of the preimage of the Kummer quartic  on $G_2$ which is an abelian surface together with the preimage of $K_2$. The latter preimage is a double cover of $K_2$ branched in the intersection $K_2\cap K_1$. As earlier $K_2$ has 12 nodes and $K_2\cap K_1$ is smooth.
 \item $p\in G_3$ the fixed locus is  the preimage of $J_1$ i.e. a double cover of $J_1$ branched in the curve of intersection with $J_2$ and its 4 nodes. Note that such a double cover is an Enriques surface.
\end{enumerate}
\end{proof}

\begin{rem} The surfaces $K$ appearing as fixed loci of the involution $\mathbf{i}\circ \mathbf{j}$ in case 2 of Proposition \ref{fixed pts prop} form a $10$ dimensional family of polarized Enriques surfaces as $V_A$ moves in an $11$ dimensional family. Indeed, it is enough to prove that the branch curves $C_K$ of the cover $K\to D$ form a $10$ dimensional family of curves. For that we observe that by general lattice theory  the polarized $K3$ surfaces obtained as fixed loci of the symplectic involution $\mathbf{j}$ are in one to one correspondence with the family of IHS manifolds with involution given by varying $V_A$. Note now that the Del Pezzo surface $D$  is determined uniquely by the web of quadrics containing $C_K$, as the intersection of two rank 3 quadrics,  whereas the K3 surface is given by a pencil of quadrics meeting the pencil defining $D$ in a point corresponding to a singular quadric. It follows that a general curve $C_K$ lies on at most a one dimensional family of K3 surfaces in the family. 
This concludes the proof by dimension count. \end{rem}

\subsection{Picard lattices of moduli spaces of twisted sheaves}
For a more precise description of the Picard lattice of the families of IHS constructed in \S \ref{Verras}  we use Theorem \ref{moduli-twisted}. Both families constructed in \S \ref{Verras} are related to the family of K3 surfaces $S$ obtained as double covers of $\mathbb{P}^2$ branched along smooth symmetric sextics.
We present below some necessary ingredients.

We consider the family of K3 surfaces $S$ obtained as double covers of $\mathbb{P}^2$ branched along smooth sextics which are symmetric with respect to the involution $j:\PP^2\rightarrow \PP^2$. Let $H$ be a degree two polarization on $S$ which is invariant for the involution $\iota$ induced by $j$ on $S$.
The general member $S$ of this family is endowed with a non symplectic involution $\iota$ whose invariant lattice is $\Pic(S)\cong \langle 2\rangle\oplus E_8(-2)$ and whose transcendental lattice is $T(S)=U^2\oplus E_8(-2)\oplus \langle -2 \rangle$ (see \cite[Prop 2.2]{vGS} for further details).

Our aim is to find twists $\alpha \in \mathrm{Br}(S)$ and a Mukai vector $v\in H^{\ast}(S,\ZZ)$ such that the moduli space $M_v(S,\alpha)$  is an IHS fourfold of $K3^{[2]}$ type that admits a non symplectic involution with invariant lattice isomorphic either to $U(2)\oplus E_8(-2)$ or to $U\oplus E_8(-2)$.

Since an element $\alpha$ of order two defines a surjective homomorphism
$T(S)\to \ZZ/2\ZZ$,
our problem is reduced to find a homomorphism $\alpha\colon T(S)\to \Z/2\Z$ such that $\ker(\alpha)^{\perp}$ contains the lattice $U(2)\oplus E_8(-2)$.

Let $r_i, \,i=1,\dots, 9$, be a basis for $E_8(-2)\oplus \langle -2\rangle$. Then it is easy to show that $\alpha\in 
\mathrm{Br}(S)_2$ can always be written in the form 
\begin{equation}\label{alpha-form}
 \sum n_i r_i+\lambda\mapsto \frac{1}{2}\langle\sum\alpha_ir_i,\sum n_ir_i\rangle +\langle \lambda,\lambda_\alpha\rangle\,\mathrm{mod}\,(2)
\end{equation}
with $\lambda\in U^2$, $\lambda_\alpha\in U^2/2(U^2)$ and $\alpha_i=0,1$ for all $i=1,\dots,9.$ Indeed, for unimodular lattices all maps to $\mathbb{Z}/2\mathbb{Z}$ are obtained as reduction modulo $2$ of the pairing with an element of the lattice and our lattice can be written as a unimodular lattice direct sum twice a unimodular lattice, hence our claim.
In the following we will consider only two torsion classes having a $B$-lift $B\in H^2(S,\mathbb{Q})$ of $\alpha\in H^2(\oo_S^{\ast})$ such that $B.H=0$ and $B^2=0$. It was proven in \cite[\S 9]{vG} that these classes are precisely the classes corresponding to Verra fourfolds on a very general $K3$ surface of degree $2$ (cf.~\cite[Lem.~6.1]{Ku2}). As the conditions above are closed, this actually holds for all $K3$'s of degree two.

\begin{prop}\label{lem:15brauer-classes}
Let $S$ be as above and such that $\Pic(S)\cong \langle 2\rangle\oplus E_8(-2)$, and take $v:=(0,H,0)\in H^*(S,\Z)$. Then:
\begin{enumerate}
 \item the moduli space of stable sheaves $M_v(S)$ on $S$ has Picard lattice isometric to $U\oplus E_8(-2)$;
\item Let $\alpha$ be a non trivial Brauer class with the $B$-lift property outlined above i.e.~having a $B$-lift with $B^2=BH=0$. Then $\Pic(M_v(S,\alpha))\simeq U(2)\oplus E_8(-2)$.
\end{enumerate}
\end{prop}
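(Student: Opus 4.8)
The plan is to compute both Picard lattices as orthogonal complements of $v$ inside the algebraic part of the relevant Mukai lattice, using the isometry $v^\perp\cong H^2(M_v,\ZZ)$ recalled in Subsection \ref{ssec:twist}. For part (1), the algebraic Mukai lattice is $\widetilde{H}^{1,1}(S,\ZZ)=\ZZ(1,0,0)\oplus\Pic(S)\oplus\ZZ(0,0,1)$ with the Mukai pairing, where $(1,0,0)$ and $(0,0,1)$ span a copy of $U$ orthogonal to $\Pic(S)=\langle H\rangle\oplus E_8(-2)$, $H^2=2$. Since $\langle(r,D,s),v\rangle=D\cdot H$, the vector $v=(0,H,0)$ has $v^2=2$ and pairs nontrivially only with the $\langle H\rangle$ summand; hence $v^\perp\cap\widetilde{H}^{1,1}(S,\ZZ)=U\oplus E_8(-2)$, the condition $D\cdot H=0$ removing exactly the $\langle H\rangle$ factor. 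As $\Pic(S)\cong\langle 2\rangle\oplus E_8(-2)$ is the full Néron–Severi group, the whole of $\widetilde{H}^{1,1}(S,\ZZ)$ is algebraic and this computes $\Pic(M_v(S))$.

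For part (2), by the description in Subsection \ref{ssec:twist} the hypothesis $B^2=B\cdot H=0$ gives $\Pic(S,\alpha)=\langle\Pic(S),(0,0,1),w\rangle$ with $w:=(2,2B,0)$, a lattice in which $\{H,e_1,\dots,e_8,(0,0,1),w\}$ is a basis ($e_i$ a basis of $E_8(-2)$). As $v$ has rank $0$ and $B\cdot H=0$, its twisted Mukai vector is again $v_B=(0,H,0)$, lying in $\Pic(S,\alpha)$ since $2\mid 0$. Here $\langle v_B,e_i\rangle=\langle v_B,(0,0,1)\rangle=0$ and $\langle v_B,w\rangle=2(B\cdot H)=0$, while $v_B^2=2$, so $v_B^\perp=\langle E_8(-2),(0,0,1),w\rangle$, of rank $10$ and signature $(1,9)$. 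Now $\langle w,w\rangle=4B^2=0$ and $\langle w,(0,0,1)\rangle=-2$, so $(0,0,1)$ and $w$ span a copy of $U(2)$; the only obstruction to splitting off $E_8(-2)$ orthogonally are the cross terms $\langle e_i,w\rangle=2(B\cdot e_i)$.

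To eliminate them I would first use the ambiguity of the $B$-lift (defined modulo $H^2(S,\ZZ)+\tfrac12\Pic(S)$) to arrange $B_{\mathrm{alg}}\in\tfrac12\Pic(S)$, so that $D:=2B_{\mathrm{alg}}\in\Pic(S)$ and $\langle e_i,w\rangle=e_i\cdot D$. Writing $D=aH+D'$ with $D'\in E_8(-2)$, these pairings equal $\langle e_i,D'\rangle$, so replacing $w$ by $w':=w-D'$ gives $\langle w',e_i\rangle=0$ for all $i$ and $\langle w',(0,0,1)\rangle=-2$, while a short computation yields $\langle w',w'\rangle=-\langle D',D'\rangle$. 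Because $E_8(-2)$ is $(-2)$ times an even lattice, its norms lie in $4\ZZ$, so $N:=\langle D',D'\rangle\in4\ZZ$ and $w'':=w'-\tfrac{N}{4}(0,0,1)$ is an integral vector with $\langle w'',w''\rangle=0$, $\langle w'',(0,0,1)\rangle=-2$ and $w''\perp E_8(-2)$. The passage $w\rightsquigarrow w''$ is a unimodular base change fixing $E_8(-2)$ and $(0,0,1)$, whence $v_B^\perp=E_8(-2)\oplus\langle(0,0,1),w''\rangle\cong E_8(-2)\oplus U(2)$.

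The main obstacle is precisely this last identification: controlling the cross terms $\langle e_i,w\rangle$ and guaranteeing that the correcting vector stays integral. The two points that make it work are that $\langle e_i,w\rangle$ is even (evenness of $E_8(-2)$, orthogonal to $H$) and that $\langle D',D'\rangle$ is divisible by $4$. As an alternative to the explicit base change, one could verify that $v_B^\perp$ is an even $2$-elementary lattice of rank $10$, signature $(1,9)$ and invariants $(a,\delta)=(10,0)$ and then invoke Nikulin's uniqueness result, Theorem \ref{thm:nik_2el}; however, checking $2$-elementarity requires the same control of the cross terms, so the direct computation seems the cleaner route.
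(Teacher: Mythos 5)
Part (1) of your proposal is correct, and your direct computation of $v^\perp\cap\widetilde{H}^{1,1}(S,\ZZ)$ is in fact more self-contained than the paper's argument, which instead invokes $v^2=2$ and $\divi(v)=2$ to single out $U\oplus E_8(-2)$ among the two possible isometry types of $v^\perp\cap(U\oplus\Pic(S))$. In part (2) your frame also matches the paper's --- compute $v_B^\perp$ inside $\Pic(S,\alpha)=\langle\Pic(S),(0,0,1),w\rangle$ with $w=(2,2B,0)$ --- and your Gram computations, the triangular base change $w\rightsquigarrow w''$, and the divisibility $\langle D',D'\rangle\in 4\ZZ$ are all correct, but only \emph{conditionally on} the normalization $B_{\mathrm{alg}}\in\tfrac12\Pic(S)$.

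That normalization is a genuine gap: it cannot be achieved by the ambiguity you invoke. Since $\Pic(S)\cong\langle 2\rangle\oplus E_8(-2)$ is $2$-elementary, $\Pic(S)^\vee\subset\tfrac12\Pic(S)$; hence for any $h\in H^2(S,\ZZ)$ the projection of $h$ to $\Pic(S)\otimes\Q$ lies in $\Pic(S)^\vee\subset\tfrac12\Pic(S)$, so replacing $B$ by $B+h+q$ with $q\in\tfrac12\Pic(S)$ never changes the class of $B_{\mathrm{alg}}$ modulo $\tfrac12\Pic(S)$: your condition holds after such a shift if and only if it held before, and it is a nontrivial constraint on $\alpha$. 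It is moreover equivalent to the evenness of the cross terms that you yourself identify as the crux: a priori $2B\in H^2(S,\ZZ)$ pairs with $e_i\in E_8(-2)$ only integrally, not evenly, since the projection of an integral class to $\Pic(S)\otimes\Q$ lies in $\tfrac12 H\ZZ\oplus\tfrac12 E_8(-2)$ and $\tfrac12\langle e_i,F\rangle$ can be odd for $F\in E_8(-2)$; with an odd cross term, $w$ glues nontrivially to $E_8(-2)$ and the claimed splitting fails. This is exactly where the paper's proof does its real work: it takes a lift with $2B\in T(S)\oplus\Pic(S)$, writes $2B=s+t$ with $s\in U^2$ and $t$ in the non-unimodular part $E_8(-2)\oplus\langle-2\rangle$ of $T(S)$, and uses that the gluing of that part with $\Pic(S)$ inside the unimodular $H^2(S,\ZZ)$ has maximal index to produce $r\in\Pic(S)$ with $(r+t)/2\in H^2(S,\ZZ)$, whence the lift is equivalent to $s/2$. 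After this reduction $B$ has no algebraic component at all, the cross terms vanish identically rather than merely being even, and $\Pic(S,\alpha)=\langle\Pic(S),(0,0,1),(2,s,0)\rangle\cong U(2)\oplus E_8(-2)\oplus\langle 2\rangle$ splits by inspection. To repair your argument you need this gluing lemma, or an appeal to van Geemen's description of exactly which classes admit a lift with $B^2=BH=0$; note also that, had your shift of the lift been nontrivial, it would in general destroy $B^2=0$, which you continue to use when computing $w^2=0$ and $\langle w',w'\rangle=-\langle D',D'\rangle$.
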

\begin{proof}
First of all, $v=v_B$ (see Subsection \ref{ssec:twist}) for our choice of the $B$-lift of a Brauer class; also, $v^2=2$.
By the assumption, we have $T(S)\cong U^2\oplus E_8(-2)\oplus \langle -2\rangle$. For the trivial Brauer class, we thus have $$\Pic(M_v(S))=v^{\perp}\cap (U\oplus \Pic(S)).$$ There are two possible non-isometric choices for such a lattice, either it is $\langle 2\rangle\oplus E_8(-2)\oplus \langle -2\rangle$ or it is $U\oplus E_8(-2)$. This one occurs any time that $v^2=2$ and $\mathrm{div} (v)=2$, so in particular for $v=(0,H,0)$. This proves (1).

We denote with $\Gamma_\alpha$ the index two sublattice of $T(S)$ given by 
$\ker(\alpha)$ and by $B$ a B-lift of $\alpha$ in $(T(S)\oplus \Pic(S))\otimes \frac{1}{2}\mathbb{Z}$. Let $s,t$ be respectively the projection of $2B$ in the unimodular part of $T(S)$, $U^2$, and in its orthogonal $E_8(-2)\oplus\langle -2\rangle$. In order for $\lambda_\alpha$ to be non zero, we must have $s\neq 0$.
As before, $\Pic(S,\alpha)$ is the saturation of the sublattice generated by $\Pic(S)$ and $\langle (0,0,1),(2,s+t,0) \rangle$. As $t$ lies in the non unimodular part of $T(S)$ and the extension between this part and $\Pic(S)$ is unimodular and of maximal index, for all $t\neq 0$ there exists a non-zero $r\in \Pic(S)$ such that $(r+t)/2\in H^2(S,\mathbb{Z})$. Therefore, the $B$ lift $(s+t)/2$ corresponds to the same Brauer class of the lift $(r+s+t)/2=s/2+(r+t)/2$, which in turn is the same Brauer class of the lift $s/2$, as $(r+t)/2$ is an integer class. This means that if $s=0$ the Brauer class is in fact trivial. If $s\neq 0$, $\Pic(S,\alpha)=\langle \Pic(S),(0,0,1),(2,s,0) \rangle \cong U(2)\oplus E_8(-2)\oplus \langle 2\rangle$. Therefore, $v^\perp$ is isometric to the direct sum of $U(2)$ and $E_8(-2)=v^\perp\cap \Pic(S)$, which is point $(2)$.
\end{proof}
We will denote by $I$ the set of Brauer classes of case (2) above. Remark that all Brauer classes of order two are invariant with respect to $\iota$: the involution acts on $\mathrm{Br}(S)\cong T(S)^*\otimes \Q/\ZZ$ (see \cite{vG}) via $\alpha\mapsto \alpha\circ \iota^*$; on the other hand, $\iota^*_{|T(S)}=-\mathrm{id}_{T(S)}$, hence invariance is obvious for elements in $ \mathrm{Br}(S)_2$.
\begin{prop}\label{prop:e8_brau}
Let $S,\alpha,v$ be as above, let $\alpha\in I$ and let $S$ be such that $\Pic(S)\cong \langle 2\rangle\oplus E_8(-2)$, 
then $M_v(S,\alpha)$ is smooth and has a non symplectic involution with 
invariant lattice $U(2)\oplus E_8(-2)=\Pic(M_v(S,\eta))$. 
\end{prop}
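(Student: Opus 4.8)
The plan is to realize the sought involution as the automorphism $\hat\iota$ induced on $M_v(S,\alpha)$ by the non-symplectic involution $\iota$ of $S$, and then to read off both its type and its invariant lattice from the induced action on the twisted Mukai lattice $\widetilde H(S,\alpha)$.

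First I would check that the induced-automorphism construction of Section \ref{induced} applies: the class $\alpha\in I$ is $\iota$-invariant (as recalled just before the statement, every element of $\Br(S)_2$ is fixed because $\iota^*_{|T(S)}=-\mathrm{id}$), and $v=(0,H,0)=v_B$ is $\iota$-invariant since $H$ was chosen $\iota$-invariant. Hence $\iota$ induces $\hat\iota\in\Aut(M_v(S,\alpha))$ with $\hat\iota^2=\id$. For smoothness I would invoke Yoshioka's construction \cite{Yo}: $v_B$ is primitive with $v_B^2=2>0$, and for $S$ with $\Pic(S)\cong\langle 2\rangle\oplus E_8(-2)$ there are no strictly semistable twisted sheaves with this primitive Mukai vector, so $M_v(S,\alpha)$ is a smooth projective fourfold of $K3^{[2]}$-type.

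Next I would describe the action of $\hat\iota$ on $H^2(M_v(S,\alpha),\mathbb{Z})\cong v_B^\perp\subset\widetilde H(S,\alpha)$, which is the restriction of the induced Hodge isometry of $\widetilde H(S,\alpha)$. By the remark in Subsection \ref{ssec:twist}, since $\iota(\alpha)=\alpha$ and $\iota^*\sigma_S=-\sigma_S$, this isometry sends $\sigma_B$ to $-\sigma_B$ and is the identity on the summand $\langle(0,0,1),(2,s,0)\rangle\cong U(2)$; since $\iota$ is moreover the identity on its invariant lattice $\Pic(S)\cong\langle 2\rangle\oplus E_8(-2)$, the induced isometry is the identity on the whole twisted Picard lattice $\Pic(S,\alpha)=\langle\Pic(S),(0,0,1),(2,s,0)\rangle$ and equals $-\mathrm{id}$ on the transcendental part (on which $\iota^*=-\mathrm{id}$). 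In particular $\sigma_B$, which corresponds to the symplectic form of $M_v(S,\alpha)$, is reversed, so $\hat\iota$ is a non-symplectic involution, and the fixed lattice of the induced isometry on $\widetilde H(S,\alpha)$ is $\Pic(S,\alpha)$.

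Finally, intersecting with $v_B^\perp$ yields
$$H^2(M_v(S,\alpha),\mathbb{Z})^{\hat\iota}=v_B^\perp\cap\Pic(S,\alpha)=\Pic(M_v(S,\alpha))\cong U(2)\oplus E_8(-2),$$
where the middle equality identifies the algebraic part of $H^2$ and the last one is Proposition \ref{lem:15brauer-classes}(2); this is the claimed invariant lattice, which moreover coincides with the full Picard lattice. The step I expect to be the main obstacle is justifying that the induced isometry fixes the $B$-lift summand $\langle(0,0,1),(2,s,0)\rangle$: the naive pullback sends $(2,s,0)$ to $(2,-s,0)$, which is not of type $(1,1)$ for the twisted Hodge structure, so one must use that $\iota(B)=-B$ is again a $B$-lift of the invariant class $\alpha$ to see that the genuine induced Hodge isometry fixes this summand. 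This is exactly what prevents the invariant lattice from being a proper sublattice and secures its identification with $U(2)\oplus E_8(-2)$.
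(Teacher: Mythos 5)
Your proposal is correct and takes essentially the same route as the paper's proof: smoothness via Yoshioka's \cite[Prop.~3.10]{Yo} after a suitable choice of polarization, the induced involution of Section \ref{induced} (using the $\iota$-invariance of $v$ and $\alpha$), and the identification of the invariant lattice with $\Pic(M_v(S,\alpha))\cong U(2)\oplus E_8(-2)$ via Proposition \ref{lem:15brauer-classes}(2). The only difference is expository: where the paper compresses the key step into ``the lattice $U(2)$ comes from $H^0\oplus H^4$ and $E_8(-2)$ is $\iota$-invariant,'' you correctly spell out the $B$-lift subtlety --- that the genuine induced Hodge isometry, unlike the naive pullback sending $(2,s,0)$ to $(2,-s,0)$, fixes $\langle (0,0,1),(2,s,0)\rangle$ because $\iota(B)=-B$ is again a $B$-lift of $\alpha$ --- which is precisely the content of the Remark in Subsection \ref{ssec:twist} that the paper implicitly invokes.
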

\begin{proof} 
After choosing a general polarization, we can freely assume that 
$M_v(S,\alpha)$ 
is smooth (cf. \cite[prop.~3.10]{Yo}). By Lemma \ref{lem:15brauer-classes}, $\Pic(M_v(S,\alpha))=U(2)\oplus E_8(-2)$. As $v$ 
and $\alpha$ are $\iota$ invariant, there exists an induced involution on 
$M_v(S,\alpha)$ as explained in Section \ref{induced}. As the lattice $U(2)$ comes from $H^0\oplus H^4$ and $E_8(-2)$ 
is $\iota$ invariant, the induced involution leaves $\Pic(M_v(S,\alpha))$ 
invariant.
\end{proof}

\subsection{Proof of Theorem \ref{main}: the case $U(2)\oplus E_8(-2)$}\label{step}
 
Note that it follows from Section \ref{induced} that the considered IHS fourfolds are moduli spaces of twisted sheaves on a $K3$ surface and the non-symplectic involution is induced. Our aim is to describe 
the base $K3$ surface and the corresponding Brauer element.
 A general Verra fourfold $V$ from the family $\mathcal{V}_1$ (resp.~$\mathcal{V}_2$) gives rise through the Hilbert scheme of $(1,1)$ conics to an IHS fourfold $X_{V}$ with Picard group $$E_8(-2)\oplus U \ \ (\text{resp.}\ \ E_8(-2)\oplus U(2).)$$

Let $V\in \mathcal{V}_1$.
It follows from Proposition \ref{prop:e8_lagr} that $\Pic(X_V)$ is an over-lattice of $U(2)\oplus E_8(-2)$ which contains primitively $U(2)$ and $E_8(-2)$.
By Lemma \ref{discr}, one of the projections has singular discriminant locus and the preimage of the singular point is a corank 2 quadric. It follows that the Verra fourfold contains a reducible quadric. Each of these two planes is then a holomorphic section of the second projection with smooth discriminant with associated K3 surface $S_1$ of degree $2$.
We deduce by \cite[Thm.~6.3]{KPS} (see proof of \cite[Prop.~2]{ABBV}) that the Brauer element $\beta_1$ on $S_1$ corresponding
to the quadric bundle is trivial.
 By Theorem \ref{moduli-twisted} we have that $X_V$ is a moduli space of (untwisted) sheaves on $S_1$ with Mukai vector $(0,H,0)$.
 From Prop.~\ref{invV} we deduce that $S_1$ satisfies the assumption of Proposition \ref{lem:15brauer-classes}.
We can conclude that  $\Pic(X_V)$ is $U\oplus E_8(-2)$.

Now let $V\in \V_2$.
It follows from Proposition \ref{invV} that the general symmetric sextic curve can be the discriminant curve of a symmetric Verra threefold. 
One can prove using Macaulay 2 by studying general examples, that the two families of Verra threefolds $\V_1$ and $\V_2$ give rise to smooth IHS fourfolds (by finding the singular loci of the associated EPW quartics and using \cite[\S 3]{IKKR}). 
It follows from Theorem \ref{moduli-twisted} that $X_{V}$ is birational to the moduli space of twisted sheaves so it is isomorphic to it by Lemma \ref{chamber-orbits}. It follows from Lemma \ref{lem:15brauer-classes} that $X_{V}$
 has either Picard group $E_8(-2)\oplus U(2)$ or $E_8(-2)\oplus U$. Now our aim is to prove that the second case cannot happen.

We claim that the considered families of symmetric Verra threefolds $\V_1$ and $\V_2$ give
$11$ dimensional families of IHS fourfolds with non symplectic involutions.
Indeed, it is enough to prove that we obtain by our construction a general element either of the 
moduli space $\mathcal{M}_{10,10,0}^s$ or of $\mathcal{M}_{10,8,0}^s$.
Let $\mathcal{B}$ by the sub-variety of the Lagrangian Grassmannian parametrizing $A$ that corresponds to
the considered Verra fourfolds divided by $PGL(V)$ (as in Section \ref{fixed}).
Our family $\mathcal{B}$ induces a flat family $\mathcal{F}$ of polarized IHS fourfolds with $q=4$ such that 
\begin{enumerate}
\item the polarizations in each fiber $X_b$ of the family induce a 2:1 cover whose associated involution $\mathbf{i}_b$ has invariant lattice $F$ isomorphic to $U(2)$,
\item the family admits a flat family of symplectic involutions $\iota_b$ which commute with involutions $\mathbf{i}_b$ such that $\tau_b=\iota_b\circ \mathbf{i}_b$ form a flat family of non symplectic involutions with invariant lattice $F'$ isomorphic to a $2$-elementary overlattice of $U(2)\oplus E_8(-2)$ such that $F\subset F'$; either $F'\cong U\oplus E_8(-2)$ or $F'\cong U(2)\oplus E_8(-2)$. Let $(r,a,\delta)$ be the invariants associated to $F'$ in the sense of Section \ref{ssec:lattices}.
\end{enumerate}
Consider $\mathcal{B}^s\subset \mathcal{B}$ corresponding to those pairs $(X_b,\tau_b)\in\mathcal{M}_{r,a,\delta}^s$ which are simple pairs. Note that $\mathcal{B}^s$ is open by general theory and nonempty 
since we saw above that a very general element of $\mathcal{F}$ involves an IHS manifold $X$ with $\Pic(X)\simeq F'$ (from Proposition \ref{lem:15brauer-classes}). We hence have a map 
 \begin{equation}\label{kk}\psi: \mathcal{B}\ni A\to X_A\in \mathcal{M}_{r,a,\delta}^s.\end{equation}
Observe that $\psi$ has generically finite fibers. Indeed, consider a very general point in $b_0\in \mathcal{B}^s$, so that we can assume that $\Pic(X_{b_0})\cong F'$. Consider the set $$K=\{b\in\mathcal{B}^s |\ \psi(b)=\psi(b_0)\}.$$ We shall prove that $K$ is a finite set. Indeed $b\in K$ implies $X_b\simeq X_{b_0}$, furthermore on $X_{b_0}$ there is a countable set of polarizations of degree 4. Now, for each polarization of degree 4 inducing an involution on $X_{b}$ with fixed lattice $$U(2)\subset \Pic(X_b)=\Pic(X_{b_0})$$ there is at most one symplectic involution with anti-invariant lattice $$U(2)^{\perp}\subset \Pic(X)=\Pic(X_{b_0}).$$ It follows that for $b\in K$ the involution $\tau_b$ is uniquely determined by the choice of a polarization on $X_{b}$. We conclude that $K$ is countable and since it represents a fiber of an algebraic map it is finite.
We infer that the image of $\psi$ has dimension $11$ which is equal to the dimension of  $\mathcal{M}_{r,a,\delta}^s$ in both possible cases for $F'$.  It follows that $\psi$ is a dominant map which concludes the proof of the claim. 

On the other hand, by Corollary \ref{connectedness} the families $\mathcal{M}_{10,10,0}^s$ and $\mathcal{M}_{10,8,0}^s$ are irreducible.
So it is enough to show that a general element from $\mathcal{V}_1$ gives rise to IHS with involution that is different from the IHS's with involutions obtained from the family $\mathcal{V}_2$.
This can be done by comparing the fixed loci of the above involutions.
Indeed, by Proposition \ref{fixed pts prop} we infer that those fixed loci are Enriques surfaces in one case and a reducible smooth surface in the other case. 
We conclude that for a general Verra fourfold  $V \in\mathcal{V}_2$ we have $\Pic(X_{V})=E_8(-2)\oplus U(2)$. This completes the proof of the first statement in Theorem \ref{main}.\qed

\begin{rem}
When the invariant lattice is $U(2)\oplus E_8(-2)$, the \kahl cone of the manifold coincides with the positive cone, hence there is an infinite group of automorphisms. Indeed, by the proof of Morrison-Kawamata cone conjecture (see \cite{av} and \cite{my}), the round ample cone has a polyhedral fundamental chamber for the automorphism group action, hence the group is infinite. This infinite group is obtained by considering all different realization of the manifold as a double EPW quartic (which are given by all embeddings of $U(2)$ into $U(2)\oplus E_8(-2)$) and considering the (non commuting) associated involutions. 
\end{rem}

\section{Proof of Theorem \ref{main}: the case $U(2)\oplus D_4(-1)$}\label{sect:D4}
In this section we shall see that IHS fourfolds with Picard lattice $U(2)\oplus D_4(-1)$ can be constructed (as in Section \ref{EPW quartic}) from a singular Verra threefold having a singular point of type $x^2 + y^2 + z^3 + t^3 = 0$.
Note that by \cite{MW} the considered IHS fourfolds with Picard lattice $U(2)\oplus D_4(-1)$ can be constructed as moduli spaces of sheaves on K3 surfaces being double covers of $\PP^2$ branched along four nodal sextics.
Indeed, one can prove that $$U(2)\oplus D_4(-1)\oplus \langle 2\rangle \cong\langle -2\rangle^{\oplus 4}\oplus \langle 2\rangle \oplus U.$$

\subsection{Verra threefold with singular point $x^2 + y^2 + z^3 + t^3 = 0$ }
The proof of the second part of Theorem \ref{main} consists of the following two steps:
 \begin{prop}\label{asd}Let $A$ be a general Lagrangian subspace of $\wedge^3(U_1\oplus U_2)$ such that $\PP(A)\cap G(3,U_1\oplus U_2)= \{[N],[U_1]\}$ and $\dim \PP(T_N)\cap \PP(A)=2$. Then
$X_A$ has transversal singularities with the quadratic part of rank $1$ along a surface $P_N$ which is 
isomorphic to $\PP^1 \times \PP^1$.
 \end{prop}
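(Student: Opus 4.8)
The plan is to reduce the statement to a local computation around the point $[N]\in G(3,W)$, exploiting the symmetric form attached to $A$ at a point of $\PP(A)\cap G(3,W)$ exactly as in the construction of $q_{A,U_2}$ at $[U_1]$. First I would work in the affine chart of $G(3,W)$ centred at $[N]$, identifying a neighbourhood with $\Hom(N,W/N)$. Since $\wedge^3 N\in A$, the quotient $\bar A:=A/\langle\wedge^3 N\rangle$ is the graph of a symmetric map $q_N\colon \bar T_N\to\bar T_N^\vee$, where $\bar T_N:=T_N/\langle\wedge^3 N\rangle$; this is the same recipe that produces $q_{A,U_2}$, now performed at $[N]$ in place of $[U_1]$. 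The key numerical input is the identification $\ker q_N=(A\cap T_N)/\langle\wedge^3 N\rangle$, so that $\dim\ker q_N=\dim(A\cap T_N)-1=2$. Hence $q_N$ has corank exactly two, which is precisely the content of the hypothesis $\dim\PP(T_N)\cap\PP(A)=2$ (equivalently $[N]\in D_2^A\setminus D_3^A$).

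Next I would recall the local description of the double cover $X_A\to Y_A$ near such a point, following the analysis of Lagrangian degeneracy loci and their natural double covers in \cite{IKKR,IKKR1} (and its analogue in the EPW-sextic case). After a Schur-complement reduction with respect to the rank-seven part of $q_N$, the fourfold $X_A$ is cut out, transversally to the degeneracy, by the $2\times2$ symmetric minor governing the corank-two locus, together with the square-root equation of the cover. The distinguishing feature here — the one responsible for the singularity — is that $[N]$ itself lies on $\PP(A)$, i.e. $\wedge^3 N\in A$; this is what forces the two sheets of the cover to come together along the locus $P_N$, in contrast with a general point of $D_2^A$, where $\wedge^3 v\notin A$ and the two sheets stay separated. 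I would then identify the singular locus: it is the smooth quadric surface carved out by $q_N$ in the three-dimensional projective space naturally attached to the two kernel directions, so that $P_N\cong\PP^1\times\PP^1$ as a smooth quadric in $\PP^3$ (equivalently, $P_N$ may be read off as the product $\PP(\ker q_N)\times\PP(\coker q_N)$ of the two kernel lines of the corank-two form, mapping finitely to its image in $Y_A$). Verifying that this quadric is smooth, and that it is exactly the singular locus, uses the generality of $A$.

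Finally I would carry out the transversal second-order computation, which is the heart of the matter. Fixing a general point $p\in P_N$, I would choose coordinates splitting the tangent space of $X_A$ at $p$ into directions along $P_N$ and the two transversal directions, expand the defining equation of the double cover to second order, and read off its $2$-jet. The claim to be verified is that this quadratic part has rank exactly $1$: it must be nonzero (so the transversal singularity is of the stated hypersurface type) yet degenerate in one of the two transversal directions (so the rank is $1$ rather than $2$). I expect this to be the main obstacle, since it requires tracking how the square-root equation interacts with the vanishing of the $2\times2$ symmetric minor along $P_N$; the exact corank-two hypothesis together with the generality of $A$ are precisely what pin the rank down to $1$. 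Once the rank-one normal form is established at the general point, upper semicontinuity of the transversal rank and the irreducibility of $P_N\cong\PP^1\times\PP^1$ propagate the conclusion along the whole surface.
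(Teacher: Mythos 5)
Your decoding of the hypothesis at $[N]$ is correct: $\dim \PP(T_N)\cap\PP(A)=2$ does say that the symmetric form attached to $A$ at $[N]$ has two-dimensional kernel $(A\cap T_N)/\langle \wedge^3 N\rangle$. But your proof then localizes at the wrong point, and this is a structural failure rather than a fixable detail. The singular surface is the incidence variety $P_N=C_{U_1}\cap C_N=\lbrace [U]\in G(3,W)\,:\,\dim(U\cap U_1)\geq 2,\ \dim(U\cap N)\geq 2\rbrace$. Since $\PP(A)\cap G(3,W)$ is finite, one has $\dim(U_1\cap N)=1$ exactly (the Lagrangian condition forces $\dim(U_1\cap N)\geq 1$, while $\dim(U_1\cap N)=2$ would put a whole line of the Grassmannian inside $\PP(A)$), whence $P_N\cong \PP\bigl(U_1/(U_1\cap N)\bigr)\times\PP\bigl(N/(U_1\cap N)\bigr)\cong\PP^1\times\PP^1$: the Segre structure comes from this incidence geometry, not from $\PP(\ker q_N)\times\PP(\mathrm{coker}\, q_N)$ as you propose, and you give no argument for your identification. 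Crucially, $\dim(N\cap U_1)=1$ means $[N]\notin C_{U_1}$, so $[N]$ lies neither on $P_N$ nor on $Y_A\subset C_{U_1}$ at all; no affine chart centred at $[N]$, and no Schur-complement reduction of $q_N$ performed there, can detect the transversal structure along $P_N$, which sits at positive distance from $[N]$.

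The paper instead works at a general point $U_0\in P_N$, chooses an auxiliary space $U_\infty$, and writes $Y_A$ near $U_0$ as the determinant of a symmetric $9\times 9$ matrix of power series on a neighbourhood of $0\in W_5=H^0(\mathcal{I}_{\hat{C}|\PP^8}(2))$. At such a point the constant matrix $Q_{\bar A}$ has corank \emph{one}, with kernel spanned by $[N]$ — not corank two — so your ``$2\times 2$ symmetric minor plus square-root equation'' model, which is the standard normal form at corank-two points of a Lagrangian degeneracy locus, is the wrong local model here: it would produce transversal $A_1$ or smooth double-cover behaviour, never a rank-one quadratic part. What actually happens is more degenerate: $\Phi_0=0$ because $\langle[N]\rangle=\ker Q_{\bar A}$; $\Phi_1=0$ because every quadric $\bar q_U$ in the relevant linear system contains the cone $C_{U_0}$, which passes through $[N]$, so all these quadrics vanish on the kernel direction; and $\rk(\Phi_2)=3-\mathrm{cork}\,\bar q_{\bar A}|_{T/\langle e\rangle}$, where $T=T_{[U_0]}\cap T_{[N]}$ is the tangent space at the image $e$ of $[N]$ in the projected cone $\hat C$. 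Only at this last step does your corank-two hypothesis enter, through the identity $\rk\, q_{\bar A}|_T=\dim(T_{[N]}\cap A)=3$, which pins down $\rk(\Phi_2)=1$. Your proposal has no substitute for the vanishing of $\Phi_1$ — your ``two sheets of the cover come together'' heuristic concerns the cover $X_A\to Y_A$, whereas the rank-one quadratic part is already a property of the determinantal quartic itself — so even after relocating the computation to the correct point, the heart of the second-order argument is missing.
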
 
 \begin{cor} \label{sing}The singularities of $X_A$ are transversally $D_4$ along a $K3$ surface $R_N$ being the double cover of $P_N$ branched along a curve of bidegree $(4,4)$. There exists a resolution of singularities $\overline{X_A}\to X_A$ such that the exceptional locus is invariant with
 respect to the covering involution on $\overline{X_A}$ induced from the covering involution $X_A\to D_A$. Moreover, the invariant lattice of the latter involution is $U(2)\oplus D_4(-1)$.
  \end{cor}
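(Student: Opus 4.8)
The plan is to upgrade the transversal structure given by Proposition \ref{asd}, to identify the singular locus exactly, to resolve $X_A$ equivariantly, and then to read off the invariant lattice. First I would analyse the singularity of $X_A$ transversally to its degeneracy surface. A Du Val surface singularity whose quadratic part has rank one is necessarily of type $D_n$ or $E_n$, since the $A_n$ singularities with $n\geq 2$ have quadratic part of rank two; thus Proposition \ref{asd} already restricts the transversal type. To isolate $D_4$ I would bring the transversal equation into the form $x^2+C(y,z)+(\text{higher order})=0$ and show that for general $A$ the cubic $C$ vanishes on three distinct lines, which is exactly the normal form $x^2+y^3+z^3=0$ of $D_4$ (for $D_n$ with $n>4$ and for $E_6,E_7,E_8$ the cubic $C$ acquires a repeated root). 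The cubic $C$ is governed by the local equations of the symmetric degeneracy loci $D_i^A$ near $[N]$ together with the hypothesis $\dim(\PP(T_N)\cap\PP(A))=2$, and carrying out this computation is, I expect, the main obstacle.

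Next I would determine the singular locus precisely. Through the double cover $X_A\to D_A$ it maps onto $P_N\cong\PP^1\times\PP^1$, and the ramification takes place exactly along the triple degeneracy curve $D_3^A\cap P_N$; thus the singular locus is the double cover $R_N\to P_N$ branched along that curve. I would compute its class by the Chern-class formula for second Lagrangian degeneracy loci used in Section \ref{fixed}, expecting bidegree $(4,4)$. Since $(4,4)=-2K_{\PP^1\times\PP^1}$, the cover $R_N$ has trivial canonical bundle and is a K3 surface, which gives the first assertion.

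For the resolution I would use that transversal $D_4$ singularities along the smooth surface $R_N$ are compound Du Val, hence rational, and moreover symplectic: the holomorphic two-form on the smooth locus extends across the crepant resolution of a family of Du Val singularities. Therefore the simultaneous minimal resolution $\overline{X_A}\to X_A$ is symplectic, $\overline{X_A}$ is a smooth irreducible holomorphic symplectic fourfold, and it is of $K3^{[2]}$ type because $X_A$ is a flat degeneration of smooth double EPW quartics $X_{A'}$ and $\overline{X_A}$ occurs in the resulting smooth family. Being canonical, the minimal resolution is compatible with the covering involution $\mathbf i$, which therefore lifts to $\overline{X_A}$ and preserves the exceptional locus; in the transversal model $x^2+y^3+z^3=0$ the deck involution of $X_A\to D_A$ is $x\mapsto -x$, which fixes each of the three lines $y^3+z^3=0$ and hence fixes every component of the $D_4$ configuration rather than permuting them.

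Finally I would compute the invariant lattice. Over the K3 surface $R_N$ the exceptional fibres sweep out four prime divisors whose dual graph is the $D_4$ diagram; with respect to the Beauville--Bogomolov form these span a copy of $D_4(-1)$, and by the previous step they are $\mathbf i$-invariant. They are orthogonal to the lattice $U(2)$, which is the invariant lattice of the covering involution on the smooth double EPW quartic (the $U(2)$ case of \cite{IKKR}) and persists in $H^2(\overline{X_A},\Z)^{\mathbf i}$ since the exceptional divisors are contracted by $\overline{X_A}\to X_A$. Hence $H^2(\overline{X_A},\Z)^{\mathbf i}$ contains $U(2)\oplus D_4(-1)$, a $2$-elementary lattice with invariants $(r,a,\delta)=(6,4,0)$. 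Equality for general $A$ would follow by comparing dimensions: the family of admissible Lagrangians is $15$-dimensional, matching $\dim\mathcal M^s_{6,4,0}=15$ from Corollary \ref{connectedness}, so the invariant lattice has rank exactly six, and by Nikulin's uniqueness, Theorem \ref{thm:nik_2el}, it equals $U(2)\oplus D_4(-1)$. Besides the local cubic computation, the delicate point here is to check that the four exceptional divisors pair as $D_4(-1)$ under the Beauville--Bogomolov form, ruling out a spurious factor of two of the sort appearing for the Hilbert--Chow divisor.
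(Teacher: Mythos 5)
Your outline reproduces the paper's architecture (normal form of the transversal singularity, identification of $R_N$ as the double cover of $P_N\cong\PP^1\times\PP^1$ branched along the $(4,4)$ curve $D^3_A\cap P_N$, crepant simultaneous resolution, containment $U(2)\oplus D_4(-1)\subseteq H^2(\overline{X_A},\Z)^{\mathbf i}$, and a dimension count against $\dim\mathcal{M}^s_{6,4,0}=15$), but the two steps you defer are exactly where the paper's proof lives, so as it stands the proposal has genuine gaps. First, the $D_4$ identification: your criterion (rank-one quadratic part plus a cubic term vanishing on three \emph{distinct} lines) is correct, but nothing in your argument excludes a degenerate cubic, i.e.\ transversal $D_n$ with $n\geq 5$ or $E_n$ along part of $R_N$, and Proposition \ref{asd} alone cannot do so. The paper closes this by observing that for general $[U_1],[N]$ all points $[U_0]\in P_N$ are equivalent, so one point of one general example suffices, and then checking with Macaulay 2 that the local determinantal equation has the shape $F^2+GH(G+H)+Y(F,G,H)$ with $F,G,H$ linear and $Y$ of order four in $F,G,H$; the cubic $GH(G+H)$ has three distinct factors, and compactness of $R_N$ globalizes the conclusion. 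Some explicit verification of this kind (or a conceptual substitute) is unavoidable; you correctly flag it as the main obstacle but do not supply it.

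Second, the dimension count: asserting that the family of admissible Lagrangians is $15$-dimensional proves nothing until you show that the induced map to $\mathcal{M}^s_{6,4,0}$ is generically finite; a priori the image could be lower-dimensional and the rank bound on the invariant lattice would fail. The paper's substantive argument here is that a fixed $(4,4)$ curve arises from only finitely many Lagrangians $A$: the curve determines the cokernel sheaf $\mathfrak{I}$ of the symmetric map $\tilde{q}\colon\mathcal{T}\to\mathcal{T}^{\vee}$, this sheaf is a $2$-torsion twist of $\mathcal{O}(2)$ supported on the curve, the displayed short exact sequence is the \emph{unique} minimal locally free resolution up to automorphism of $\mathcal{T}^{\vee}$, and a curve carries only finitely many $2$-torsion line bundles; this recovers $\bar{\bar{A}}$, hence $A$, up to finite ambiguity. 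Your proposal omits this finiteness entirely. Relatedly, your final appeal to Theorem \ref{thm:nik_2el} is logically insufficient: rank six alone does not determine $(a,\delta)$, and $U(2)\oplus D_4(-1)$ admits rank-six even overlattices -- for instance $U\oplus D_4(-1)$, the other lattice studied in this paper, whose moduli space $\mathcal{M}^s_{10,8,0}$-style count would give the same dimension in rank six -- so equal dimensions cannot by themselves exclude a finite-index extension. One must argue, as the paper does, that the invariant lattice is not a proper finite extension of $U(2)\oplus D_4(-1)$; your instinct to worry about primitivity and normalization of the exceptional classes (the possible factor of two) is the right one, but it needs to be turned into an actual exclusion of overlattices. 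By contrast, your treatment of the resolution and of the lift of the covering involution (local model $x\mapsto -x$ fixing the three lines, hence each exceptional component) and the identification of $R_N$ as a K3 via $(4,4)=-2K_{\PP^1\times\PP^1}$ are sound and consistent with what the paper uses implicitly.
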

 
 \begin{proof}[Proof of Proposition \ref{asd}]  The proof is similar to \cite[Lem.~2.3]{IKKR}.
First we claim that $P_N=C_{U_1}\cap C_{N}$ is isomorphic to $\PP^1\times \PP^1$ and  is contained in $D_A$.
Indeed, it is enough to remind that $C_U$ can be seen as parametrizing the spaces $U'$ that intersect $U$ in codimension 1.

 Let $U_0\in P_N$ be a generic point and choose a $3$-space $U_{\infty}$ such that $U_{\infty} \cap U_1 =0$, $U_{\infty} \cap U_0 =0$ and $\dim (U_{\infty}\cap N)=1$. Let
$$\mathcal{U} = \{[U ] \in G(3, U_1\oplus U_2 ) | \ U \cap U_{\infty} = 0\};$$
 then $ D_A \cap \mathcal{U}= \{[U]\in \mathcal{U} \cap C_{U_1} | \ \rk(\bar{q}_U -q_{\bar{A}})\leq 8 \}$ with the notation from the proof of  \cite[Lem.~2.3]{IKKR}.

Note that for each $[U]\in \mathcal{U}$ the quadric $q_U$ on $\mathbb{P}(T_{U_0})$ associated to the symmetric map $$Q_U: T_{U_0}\to T_{U_{\infty}}\simeq T_{U_0}^{\vee}$$ whose graph is $T_U$ contains the cone $C_{U_0}$. Furthermore, if 
$[U]\in C_{U_1}\cap  \mathcal{U}$ the quadric $q_U$ is moreover singular in $[U_1]\in \mathbb{P}(T_{U_0})$.
We thus have a natural map  $$\bar{\iota}|_{\mathcal{U}\cap C_{U_1}}:\mathcal{U}\cap C_{U_1}\to W_5=H^0(\mathcal{I}_{\hat{C}|\mathbb{P}^8}(2))$$ to the system of quadrics containing $\hat{C}\subset \PP^8$ the projection of $C_{U_0}$ from $[U_1]$. This map is a local isomorphism around $[U_0]$. Thus by restricting $\mathcal{U}$ to some $ [U_0]\in \mathcal{U}_0$
we can describe $\mathcal{U}_0 \cap D_A$ as isomorphic to a variety given by the determinant of a symmetric $9\times 9$ matrix $\bar{\iota}$ of power series in some open neighborhood of $0\in W_5$. Note that after the choice of $U_0$ we can choose $A$ general satisfying our assumptions such that additionally $$\PP(T_{[U_0]})\cap \mathbb{P}(A)=<[N],[U_1]>,$$ thus $\PP(\bar{T}_{[U_0]})\cap \mathbb{P}(\bar{A})=\{[N]\}$. By symmetric row and column operations  we may assume that the matrix of constant terms, which corresponds to $Q_{\bar{A}}$ is a diagonal matrix with one 0 and eight 1s on the diagonal. Let us now write the determinant in homogeneous parts: $$\Phi_0+\Phi_1+\Phi_2+\dots.$$
Observe that $\Phi_0=0$ since $$\mathrm{ker} \ \bar{Q}_{A}=<[N]>\neq 0$$ and $\Phi_1=0$ since $\bar{q}_{U}|_{<[N]>}=0$ for each $[U]\in T_{[U_0]}$ as $[N]\in C_{[U_0]}$. By the latter we also know that $\Phi_2$ is only dependent on the linear part of the matrix $\bar{\iota}$. The latter represents the system $W_5$ of quadrics containing $\hat{C}$.
Let $$K=<[N]>=\mathrm{ker}\ Q_{\bar{A}}=\bar{A}\cap T_{[U_0]} /\wedge^3 U_1.$$ Observe that the image of $K$ through the projection $C_{U_0}\to \hat{C}$ is a point that we denote by $\langle e \rangle$. Denote by $\PP(T)$ the projective tangent space to $\hat{C}\subset  T_{[U_0]} /\wedge^3 U_1$ at $\langle e \rangle$. We are hence in the context of \cite[Prop.~1.19]{O1}.

Our aim is to compute that $\rk (\Phi_2)=1$. 
Since $\dim T=6$ we infer that $\rk(\Phi_2)=3- \mathrm{cork}\ \bar{q}_{\bar{A}}|_{T/\langle e\rangle}$ where $ \bar{q}_{\bar{A}}$ is the quadric induced by $q_{\bar {A}}$ on $T/\langle e\rangle$.
It is enough to prove that $\mathrm{cork}\ \bar{q}_{\bar{A}}|_{T/\langle e\rangle}=2$ for $A$ satisfying the assumption. Note that since $Q_{\bar{A}}|_{<e>}=0$ we have $\mathrm{cork}\ \bar{q}_{\bar{A}}|_{T/\langle e\rangle}=\mathrm{cork} q_{\bar{A}}|_T -1$. We claim that 
$$\mathrm{rk} q_{\bar{A}}|_T= \dim (T_{[N]}\cap A).$$
Indeed, observe that $T=T_{U_0}\cap T_{[N]}$ and by our assumption $\dim(N\cap U_{\infty})=1$ we have $\dim (T_{[N]}\cap T_{[U_{\infty}]})=4$. It follows that $$T_{[N]}=T\oplus (T_{[N]}\cap T_{[U_{\infty}]});$$ now if $A\cap T_{[N]}=\Pi$ and $A$ is general such that $\dim \Pi=k$   then the image $\Psi$ of the projection  of $\Pi$ from $T_{[U_{\infty}]}$ is also a space of dimension $k$ contained in 
$T\subset T_{[U_{0}]}$. But $Q_A(\Pi)$ is the projection of $\Pi$ from $T_{[U_0]}$, it is hence contained in  $T_{[N]}\cap T_{[U_{\infty}]}= \mathrm{Ann(T)}$. It follows that the linear map $T\to T^{\vee}$ associated to $q_{\bar{A}}|_T$
has k-dimensional kernel which proves the claim and by putting $k=2$ our proposition.
 \end{proof}
 To prove Corollary \ref{sing} we shall prove that for every $[U_0]\in P_N=C_{U_1}\cap C_N$ there exists $A$ satisfying the assumptions of Proposition \ref{asd} for which the singularity of $D_A$ in some neighborhood $\mathcal{U}_0$ of $[U_0]$ is a transversal $D_4$ singularity along $R\cap \mathcal{U}_0$. Note that for a choice of general $[U_1], [N]$ all choices of $[U_0]$ are equivalent. Hence it is enough to prove our claim for one specific example. We do this using Macaulay 2. Indeed, we can choose points $\{x_0,\dots, x_5\}$ in $W$ such that \begin{center}$U_1=<x_0,x_1,x_2>$, $N=<x_0,x_3,x_4>$, $U_0=<x_0,x_1,x_3>$, $U_{\infty}=<x_3, x_4+x_2 ,x_5>$.
 \end{center} We then consider the subset of $C_{U_1}$ parametrizing graphs of maps $U_1\to U_{\infty}$ represented by $3\times 3$ matrices of the form $$ \left(  \begin{array}{ccc}
x&y(1+x)&z(1+x)\\
  t(1+x)&ty(1+x)&tz(1+x)\\
  u(1+x)&uy(1+x)&uz(1+x)
 \end{array} \right),$$
 with $|x|< 1$. It is a neighborhood of $[U_0]$ which here corresponds to the matrix 0. In these coordinates $\{x,y,z,t,u\}$ the matrix defining quadratic forms is 
 $$M_{Q_A}+\left( 
 \begin{array}{ccccccccc}  0&0&0& 0&x &t+xt& 0& y+yx&0\\
    0&0&0&-x&0 &u+xu& -y-yx&0&0\\
    0&0&0&-t-xt&-u-xu&0& 0& 0&0\\
    0&-x&-t-xt&0&0&0&   0&z+zx&0\\
    x&0&-u-ux&0&0&0&  -z-zx&0&0\\
    t+x&u+ux&0&0&0&0&   0&0&0\\
    0&-y-yx&0&0&-z-zx&0&0&0&0\\
    y+yx&0&0&z+zx&0&0&0&0&0\\
    0&0&0&0&0&0&0&0&0
 \end{array}
\right).$$
For a chosen random $A$ satisfying the assumptions we compute its determinant and write it in the form $F^2+G*H*(G+H)+ Y(F,G,H)$, where $F,G,H$ are linear forms and $Y$ is a polynomial of order 4 in $F,G,H$.
 
  It follows by Proposition \ref{asd} that for general $A$ satisfying our assumption the singularity is of transversal $D_4$ type along $R_N\cap \mathcal{U}_0$ in a neighborhood of $U_0$.  By compactness of $R_N$ we conclude that for a general $A$ satisfying our assumptions the singularity of $D_A$ is a transversal $D_4$ singularity along the whole $R_N$. 
  
  It follows that the lattice $U(2)\oplus D_4(-1)$ is contained in the invariant lattice of the covering involution on $X_A$. Since this lattice admits no finite extensions we can conclude by a dimension count. More precisely, it is enough to prove that the image of the constructed family of IHS fourfolds with involution to the moduli space $\mathcal{M}_{6,4,0}^s$ is of dimension 15. Notice 
  that each constructed $X_A$ admits an associated K3 surface $R_N$. We shall compute the dimension of the image of the associated family of K3 surfaces in the moduli space of polarized K3 surfaces. Recall that each $R_N$ is obtained as a double cover of $\mathbb{P}^1\times \mathbb{P}^1=C_{[U_1]}\cap C_{[N]}$ branched in a curve being a $(4,4)$ divisor 
  on $\mathbb{P}^1\times \mathbb{P}^1$ described as $D^3_A \cap P_N$. The latter curve is obtained as a first Lagrangian degeneracy locus on $P_N$ associated to the space $\bar{\bar{A}}=A/<[U_1],[N]>\subset <[U_1],[N]>^{\perp}/<[U_1],[N]>$ and the bundle defined as the quotient of the restriction to $P_N$ of the projective tangent bundle to $G(3,W)$   by the direct sum of two trivial subbundles given by $<[U_1]>$ and $<[N]>$. Fixing $U_2$ in such a way that $U_1  \cap U_2 =0$ and $\dim (N \cap U_2)=2$ we have:
  \begin{enumerate}
  \item for every $u\in P_N$ the fiber $T_u$ decomposes as \begin{equation}\label{Tu}T_u=(T_u\cap T_{U_1})\oplus (T_u\cap T_{U_2}) \end{equation} with $\PP(T_u\cap T_{U_1})$ being the projective tangent space to $P_N$ in $u$.
  \item $T_{N}$ decomposes as \begin{equation} \label{TN} T_N=(T_N\cap T_{U_1})\oplus (T_N\cap T_{U_2})
  \end{equation}  with $\dim (T_N\cap T_{U_1})=4$ and $\dim (T_N\cap T_{U_2})=6$.
  \end{enumerate}
Notice that under our assumption $A\cap T_{U_2}= <[N]> \neq 0$ so $A$ is not the graph of a symmetric map $T_{U_1}\to T_{U_2}=T_{U_1}^{\vee}$, however $\bar{\bar{A}}$ is a graph of a symmetric map $$\bar{\bar{q}}_A: \bar{\bar{T}}_{U_1}=T_{U_1}/<[U_1],[N]> \to (T_{U_2}\cap <[U_1],[N]>^{\perp})/<[U_1],[N]>=:\bar{\bar{T}}_{U_2}.$$
By our decomposition (\ref{TN}) of  $T_N$ we deduce that our assumption on the intersection of $A$ with $T_N$ translates to the assumption that the quadric $\bar{\bar{Q}}_A$ associated to $ \bar{\bar{q}}_A$ is of rank 1 when restricted to $(T_N\cap T_{U_1})/<[U_1],[N]>$. Furthermore, we claim that the $(4,4)$ divisor $D^3_A \cap P_N$ on $P_N$ is described as the degeneracy locus of the symmetric map $\tilde{q}: \mathcal{T}_{\mathbb{P}^2\times \mathbb{P}^2}|_{P_N}\to (\mathcal{T}_{\mathbb{P}^2\times \mathbb{P}^2}|_{P_N})^{\vee}$ induced by the restrictions of $\bar{\bar{Q}}_A$ to the projective tangent spaces to $\mathbb{P}^2\times \mathbb{P}^2$ forming a bundle that we denote $\mathcal{T}_{\mathbb{P}^2\times \mathbb{P}^2}$. Indeed, for $u\in C_{[U_1]}\cap C_{[N]}$ we have $T_u\cap A\supset <[U_1],[N]>$ hence $D^3_A \cap P_N=\{u\in C_{[U_1]}\cap C_{[N]}| \dim \bar{\bar {T}}_u\cap \bar{\bar{A}}\geq 1\}$. But we know that $p\in \bar{\bar {T}}_u\cap \bar{\bar{A}}$  if and only if  $p=\tilde{p}+\bar{\bar{q}}_A(\
tilde{p})$ with $\tilde{p}\in \bar{\bar{T}}_u\cap \bar{\bar{T}}_{U_1}$ and $\bar{\bar{q}}_A(\tilde{p})\in \bar{\bar{T}}_u \cap \bar{\bar{T}}_{U_2}$. The latter is equivalent to $\tilde{q}(\tilde{p})=0$ which proves our claim.
We continue by deducing that the cokernel of the map $\tilde{q}$ is a sheaf $\mathfrak{I}$ supported on the $(4,4)$ divisor $D^3_A \cap P_N$. Furthermore the restriction to this curve is a 2-quadratic line bundle (i.e. corresponding to a 2-torsion divisor twisted by $\mathcal{O}(2)$). 
We finally claim that the exact sequence:
$$0\to \mathcal{T}\xrightarrow{\tilde{q}} \mathcal{T}^{\vee}\to I\to 0$$
is a minimal locally free resolution of $\mathfrak{I}$ which is uniquely determined up to automorphism of $\mathcal{T}^{\vee}$. Indeed, from the exact sequence:
$$0\to \mathcal{O}_{P_N}(-1)\to \mathcal{T}\to  T_{\mathbb{P}^2\times \mathbb{P}^2}(-1)|_{P_N}\to 0$$
we easily check that $h^0(\mathcal{T})=h^1(\mathcal{T})=0$ and $h^0(\mathcal{T}^{\vee})=8$. It follows also that $\mathcal{T}^{\vee}$ is globally generated and that $h^0(\mathfrak{I})=8$ which leads to minimality of our resolution. 
Let us now take another locally free resolution:
$$0\to \mathcal{A}\rightarrow \mathcal{B}\to I\to 0.$$
which leads to a diagram
$$\begin{diagram}
0 &\rTo  &\mathcal{T} &\rTo^{\tilde{q}} & \mathcal{T}^{\vee}&\rTo & I&\rTo &0\\
  & &\uTo^{\psi} & &\uTo^{\phi}& &\uTo_{\mathrm{id}} & &\\
  0 &\rTo & \mathcal{A} &\rTo & \mathcal{B}&\rTo & I&\rTo &0\\
\end{diagram}
$$
then $\phi$ and $\psi$ must be surjective.
It follows that the resolution is unique up to automorphism of $\mathcal{T}^{\vee}$.

 We infer that $\bar{\bar{A}}$ is uniquely determined by $\mathfrak{I}$. Since a curve admits only finitely many 2-torsion divisors, for a fixed $(4,4)$ curve there are at most finitely many spaces $\bar{\bar{A}}$ and in consequence spaces $A$ defining this curve. We conclude that the codimension of the space of $(4,4)$ curves in our family is equal to the codimension of the family of Lagrangian spaces in our family. This finally means that the constructed family of IHS manifolds is of dimension 15.
 \subsubsection{Associated Verra 3-folds and discriminant curves}
 The aim of this section is to complete the proof of Theorem \ref{main}. This will be a direct consequence of
 Proposition \ref{D4}.
 Let $\overline{X_A}$ be an IHS fourfold with Picard group $U(2)\oplus D_4(-1)$ constructed in the previous section.
 \begin{prop}\label{D4}  The general IHS fourfold with Picard group $U(2)\oplus D_4(-1)$ can be constructed using the Verra fourfolds $V_A$
  with a singularity
of analytic type $x^2 + y^2 +z^2+ t^3 + u^3 = 0$ in $\C^5$. Moreover, the discriminant curves of such quadric bundles are sextics with triple points.
 \end{prop}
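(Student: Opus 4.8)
The plan is to leverage the dimension count already established. By Corollary \ref{sing} and the preceding argument showing that the family of resolved fourfolds $\overline{X_A}$ is $15$-dimensional, the general IHS fourfold with Picard lattice $U(2)\oplus D_4(-1)$ is $\overline{X_A}$ for $A$ a general Lagrangian subspace of $\wedge^3(U_1\oplus U_2)$ with $\PP(A)\cap G(3,W)=\{[N],[U_1]\}$ and $\dim(\PP(T_N)\cap\PP(A))=2$, where $W=U_1\oplus U_2$; this matches $\dim\mathcal{M}_{6,4,0}^s=15$ from Corollary \ref{connectedness}. It therefore remains to reconstruct the Verra fourfold $V_A$ from such an $A$ as in Section \ref{EPW quartic} and to read off its singularity and the discriminant of its quadric fibrations. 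A first reduction is that it suffices to analyse the associated Verra threefold $V'_A$: as $V_A$ is the double cover of $\PP^2\times\PP^2$ branched along $V'_A$, its local equation at a singular point is $w^2=g$ with $g=0$ a local equation of $V'_A$, and adding the square $w^2$ raises the rank of the quadratic part by one, turning a threefold singularity $x^2+y^2+z^3+t^3$ into the claimed fourfold singularity $x^2+y^2+z^2+t^3+u^3$.

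Next I would show that the extra point $[N]\in\PP(A)\cap G(3,W)$ forces $V'_A$ to be singular and pin down the type. Choosing $U_2$ with $U_1\cap U_2=0$ and $\dim(N\cap U_2)=2$, as in the proof of Corollary \ref{sing}, the threefold $V'_A=P_{U_2}\cap Q_{A,U_2}$ is the $(2,2)$-divisor cut on the Segre $\PP^2\times\PP^2$ by the quadric attached to the symmetric map $q_{A,U_2}$ whose graph is $\bar A=A/\langle U_1\rangle$. The point $[N]$ singles out a distinguished point of $V'_A$, and the rank condition $\dim(\PP(T_N)\cap\PP(A))=2$ governs the Taylor expansion of the local equation there, precisely as the corank computation of Proposition \ref{asd} governed the singularity of $X_A$: it should force the quadratic part to have rank $2$ and the cubic part to be of type $z^3+t^3$. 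I would confirm the normal form $x^2+y^2+z^3+t^3$ by an explicit computation on a generic representative, in the spirit of the Macaulay2 verification already used for the transversal $D_4$ statement, expanding the local equation into homogeneous pieces and reducing by symmetric row and column operations.

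Finally, for the discriminant I would use the normal form directly. Projecting $V_A$ to $\PP^2$ via $\pi_1$ (resp.\ $\pi_2$), the isolated singular point maps to a point $p\in\PP^2$; in the local coordinates above, $x,y,z$ are fibre directions and $t,u$ parametrise the base, so homogenising the quadric fibre as $x^2+y^2+z^2+(t^3+u^3)s^2=0$ in $\PP^3$ gives relative discriminant $\det\mathrm{diag}(1,1,1,t^3+u^3)=t^3+u^3$. Since $t^3+u^3$ factors into three distinct linear forms, the tangent cone at $p$ of the (globally sextic) discriminant is a union of three distinct lines, so the discriminant is a sextic with an ordinary triple point at $p$, which is the second assertion.

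The main obstacle will be the local computation in the second step: one must check that the quadratic part of $V'_A$ has rank exactly $2$ and that its cubic part is a binary cubic with three distinct roots (an honest $D_4$/ordinary triple point), rather than a more degenerate form such as $t^2u$ or $t^3$, which would change both the singularity type and the multiplicity structure of the discriminant. Tracing how the condition $\dim(\PP(T_N)\cap\PP(A))=2$ propagates into these Taylor coefficients and verifying genericity of the cubic is the delicate point, and is where an explicit symbolic computation seems unavoidable.
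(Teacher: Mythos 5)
Your first two steps track the paper's own proof closely: the paper likewise takes the $15$-dimensional count (from the end of the proof of Corollary \ref{sing}, together with the irreducibility of $\mathcal{M}^s_{6,4,0}$ in Corollary \ref{connectedness}) as given, reduces from $V_A$ to the Verra threefold $V'_A$ via the double cover (adding the square $w^2$), and analyses $V'_A$ at the distinguished point $p$, the projection of $[N]$. Where you part ways is in method at the singularity step: instead of your proposed symbolic computation, the paper computes the tangent cone conceptually. Choosing $U_2$ general with $[U_2]\in\PP(T_{[N]})$ makes the quadric $q_{A,U_2}$ singular at $p$, so the tangent cone of $V'_A$ at $p$ is exactly the intersection of that quadric with the projective tangent space $\PP(\Pi^2_N)$ to the Segre; the hypothesis $\dim(\PP(T_N)\cap\PP(A))=2$ then forces this restriction to have rank $2$, because the projection $D$ of $A\cap T_{[N]}$ is a $3$-space in $\Pi^2_N$ whose image under $Q_{A,U_2}$ lies in $(\Pi^2_N)^{\perp}\cap T_{[U_1]}/\langle [U_1]\rangle$. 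Crucially, the "delicate point" you flag -- excluding degenerate cubics like $t^2u$ or $t^3$ -- is handled not by computation but by a codimension count: a rank-$2$ tangent cone is a codimension-$4$ condition on the quadric, the Lagrangian is determined by the quadric, and the family of admissible $A$'s has matching dimension, so the general $A$ realizes the \emph{generic} singularity in that stratum, namely $x^2+y^2+z^3+t^3$. This is cleaner than, and removes the need for, your explicit verification.

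The genuine gap is in your discriminant step. You read the relative discriminant off the analytic normal form by asserting that in those coordinates "$x,y,z$ are fibre directions and $t,u$ parametrise the base"; but the coordinate change producing the normal form need not respect the projection $\pi_1$, and the discriminant of a quadric bundle is not an invariant of the abstract total-space singularity, so as written this is a non sequitur. It can be repaired relatively: first check that the quadric fibre through the singular point has corank exactly $1$ (equivalently, that the conic fibre of $V'_A$ at $p$ is a pair of distinct lines rather than a double line); then block-diagonalize the $4\times 4$ symmetric matrix of the bundle fibrewise near $p_1$, splitting off the invertible $3\times 3$ block, so the bundle becomes $x^2+y^2+z^2+d(t,u)s^2$ with $d$ equal to the discriminant up to a unit, and the uniqueness in the splitting lemma forces $d$ to be right-equivalent to $t^3+u^3$, giving the ordinary triple point. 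The paper avoids this entirely with a more robust direct argument: the coefficients of the quadratic monomials $t^2, tu, u^2$ in the local equation of the discriminant at $p_1$ are $3\times 3$ minors of the restriction of $q_{\bar{A}}$ to the $4$-dimensional projective tangent space at $[N]$, which has rank $2$; hence all three vanish and the multiplicity is at least $3$. That argument needs no knowledge of the corank of the special fibre and applies symmetrically to both projections, as required by the statement.
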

 \begin{proof} The idea is to study the Verra threefold related (see Section \ref{EPW quartic}) to the Lagrangian spaces considered above.  Let $A$ be as in Proposition \ref{asd}. Let $U$ be a vector space of dimension 6 with a chosen volume form and let $[U_1]\in G=G(3,U)$.  We use the notation from Section \ref{EPW quartic};
 denote moreover the projection of $[N]$ from $T_{[U_1]}$ onto $$T_{[U_2]}/<[U_2]>\simeq (T_{[U_1]}/<[U_1]>)^{\vee}$$ by $p$. Observe that if we see $N$ as the graph of a linear map $\phi_N: U_1 \to U_2$ 
 then $p=[\ker\phi_N\oplus \im \phi_N] $.
We want to prove that $V_A$ has singularity of local analytic type $y^2 + z^2 + t^3+u^3 = 0$ in the point $p$. For that we shall compute the tangent cone of $V'_A$ in $p$. 

Observe that this tangent cone is a hypersurface in the projective tangent space to $P_{U_2}$ in $p$ and is contained in the intersection of this projective tangent space  and the tangent cone in $p$ to $q_{A,U_2}$ for every $U_2$. Now if $q_{A,U_2}$ is singular in $p$ the considered tangent cone is exactly the intersection of  $q_{A,U_2}$ with the projective  tangent space to $P_{U_2}$ in $p$. To provide a  quadric  $q_{A,U_2}$ that is singular in $p$ let us consider a general $U_2$ such that $[U_2]\in \PP(T_{[N]})$. In that case $[N]\in C_{U_2}$ and its projection to $P_{U_2}$ is $p$, which implies that the quadric $q_{A,U_2}$ is singular in $p$.

Note that $(T_{[N]}\cap T_{[U_1]})$ is a vector space $\Pi^1_N$ of dimension 4. Also in that case $T_{[N]}\cap T_{[U_2]}$ is a 6 dimensional vector space $\Pi^2_{N}$. We have $T_{[N]}=\Pi^1_N\oplus \Pi^2_N$.

 Consider now the projection of $A\cap T_{[N]}$ from $T_{[U_1]}$ onto $T_{[U_2]}$. Let us denote its image by $D$. Clearly $D$ is a 3 space such that $D\subset \Pi^2_N$. But also 
$Q_{A,U_2}(D)$ is a 3 space in  $\Pi^1_N.$ Observe also that $$\Pi^1_N=(\Pi^2_N)^{\perp}\cap T_{[U_1]}/<[U_1]>.$$ 
It follows that $(Q^1_{A,U_2})|_{(\Pi^2_N)}$ defines a rank 2 quadric on $\Pi^2_N$. We finally observe that the projectivization of $\Pi^2_N$ is exactly the projective tangent space to $$P_{U_2} \subset \mathbb{P}(T_{[U_2]}/<[U_2]>)=\mathbb{P}((T_{[U_1]}/<[U_1]>)^{\vee})$$ in $p$. 

Summing up, our condition on $A$ is equivalent to the condition for the tangent cone in $p$ of the associated Verra threefold to be a rank 2 quadric of dimension 3. The latter is a codimension 4 condition in the space of quadrics. 
It follows by dimension count (note that the Lagrangian space is uniquely determined by a quadric containing the Verra fourfold)  that for the general $A$ in our family the Verra fourfold $V_A$ admits a single isolated singularity of analytic type equivalent to a hypersurface $x^2+y^2+z^2+t^3+u^3=0$ in $\mathbb{C}^5$.

 Let us finally determine the discriminant curves given by the projections from such a Verra threefold to both $\mathbb{P}^2$.
Consider one of the two projections. Clearly the projection of $[N]$ lies in the singular locus of the discriminant curve since $q_{\bar{A}}$ was rank 2 when restricted to the projective tangent in $[N]$. Furthermore, we can compute the quadratic term of the restriction of the equation of the discriminant to the affine chart for which the projection of $ [N]$ is $(0,0)$. We note that it is expressed in terms of the restriction of $q_{\bar{A}}$ to the 4-dimensional projective tangent in $[N]$. More precisely its 3 coefficients at the quadratic monomials on $\mathbb{C}^2$ are given by corresponding  3 minors of the matrix of the restriction of $q_{\bar{A}}$. It follows that in our affine chart the equation of the discriminant curve admits no quadratic terms and hence the curve admits a triple point in $(0,0)$ being the projection of $[N]$.
\end{proof}
\begin{rem} In order to prove that we obtain all IHS fourfolds with Picard lattice $D_4(-1)\oplus U(2)$ it is enough to observe that a general sextic with a triple point can be the discriminant of the considered singular Verra threefolds. Then we can generalize Theorem \ref{moduli-twisted}  in the case when the discriminant sextic is singular in order to show that $(1,1)$ conics on $V_A$ is birational to the moduli space of twisted sheaves on the K3 surface being the desingularization of the double cover of $\PP^2$ branched along the discriminant sextic with a triple point (this will be done in a future work).  We conclude as in Section \ref{step}.
\end{rem}
\begin{rem} We expect that we can also describe our moduli space geometrically using Verra fourfolds with discriminant being a four nodal sextic, however we will not prove it here.
Recall that four nodal Verra threefolds with four nodal discriminant sextic were studied before in \cite{FV} in order to study the geometry of the  universal abelian variety over $\mathcal{A}_5$.
They are however considering Verra threefolds having four nodes in different fibers of the other projection and
the moduli spaces of four nodal sextic curves $\Gamma \subset \PP^2$ with $\gamma \in \Pic^0(\Gamma)[2]$ inducing a non trivial double cover of the normalization. 
In our context it is natural to consider four nodal Verra threefolds inducing a split cover of the four nodal sextic such that the four nodes are on one fiber. 
\end{rem}
\begin{rem} Looking at pairs of discriminant sextics with triple points we can define a relation on the set of such sextic from the point of view of Dixon correspondence. Indeed our situation is analogous to the one studied in \cite{I} where the author studied discriminant sextics of one nodal Verra threefolds.
\end{rem}

\section{Unirationality - proof of Corollary \ref{cor}}\label{koniec}

In this section we conclude by showing the unirationality of the moduli spaces $\mathcal{M}_{10,10,0}^s$ and $\mathcal{M}_{6,4,0}^s$. As we saw before, the family with invariant lattice $D_4(-1)\oplus U(2)$ is induced \cite{MW}. Thus the unirationality of $\mathcal{M}_{6,4,0}^s$ can be treated from a more general point of view and follows from the following Proposition \ref{moduli1} and the fact that the space of four-nodal sextics is unirational.

Let $(X,\varphi)$ be a pair consisting of a smooth moduli space of stable sheaves of dimension $4$ on a K3 surface $S$ and a non symplectic involution $\varphi$ which is induced from a non symplectic involution $\psi$ on $S$.
Let $T\subset L$ be a primitive hyperbolic sublattice isometric to the fixed sublattice of $\varphi^*$ in $H^2(X,\ZZ)$; we call $(r,a,\delta)$ the invariants of this $2$-elementary sublattice. We use the notation introduced in Section \ref{moduli}.

Moreover, the pair $(S,\psi)$ above gives an element of the moduli space $\mathcal{M}_{r',a',\delta'}^{K3}$, which is constructed in \cite[Remark 4.5.3]{Nikulin_2el} and in \cite[Theorem 1.8]{Yoshikawa}, where $(r',a',\delta')$ are the invariants associated to the $2$-elementary sublattice $T'$ of the K3 lattice $ L_{K3}:=U^{\oplus 3}\oplus E_8(-1)^{\oplus 2}$ isometric to the invariant sublattice of  $\psi^*$ in $H^2(S,\ZZ)$.

The period map for $K3$ surfaces gives an isomorphism 
$\mathcal{M}_{r',a',\delta'}^{K3}\cong (D_{Z'}\setminus\mathcal{H}_{K3})/\Gamma_{Z'}$, where:
\begin{itemize}
\item $Z'$ is the orthogonal of $T'$ inside $L_{K3}$;
\item $\mathcal{H}_{K3}:=\cup_{\delta\in\Delta_{K3}(Z')}\delta^{\perp}$ with $\Delta_{K3}(Z'):=\lbrace \delta\in Z'|\delta^2=-2\rbrace$;
\item $\Gamma_{Z'}$ is the arithmetic subgroup of $O(Z')$ obtained as projection of the group $\Mon^2(L_{K3},T')$ of monodromies of $K3$ surfaces fixing $T'$.
\end{itemize} 

\begin{prop}\label{moduli1}
If $\mathcal{M}_{r',a',\delta'}^{K3}$ is unirational, the moduli space $\mathcal{M}_{r,a,\delta}^{s}$ is also unirational.
\end{prop}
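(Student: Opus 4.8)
The plan is to promote the assignment $(S,\psi)\mapsto(M_v(S),\varphi)$ to a dominant rational map
$$\Theta\colon \mathcal{M}_{r',a',\delta'}^{K3}\dasharrow \mathcal{M}_{r,a,\delta}^{s},$$
and then to transfer unirationality across it, using that the image of a unirational variety under a dominant rational map is again unirational: composing a parametrization $\mathbb{P}^N\dasharrow \mathcal{M}_{r',a',\delta'}^{K3}$ with $\Theta$ produces a dominant rational map $\mathbb{P}^N\dasharrow \mathcal{M}_{r,a,\delta}^{s}$.

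First I would fix a $\psi$-invariant Mukai vector $v$ on the general member $(S,\psi)$ of $\mathcal{M}_{r',a',\delta'}^{K3}$ so that $M_v(S)$ is a smooth projective fourfold of $K3^{[2]}$-type, the induced involution $\varphi$ of Section \ref{induced} is non symplectic, and $(M_v(S),\varphi)$ is of type $T$ in the sense of Section \ref{moduli}; this is exactly the situation postulated in the statement. Since $v$ and the relevant markings are monodromy invariant, the relative moduli space of sheaves over a suitable unirational cover $B$ of $\mathcal{M}_{r',a',\delta'}^{K3}$ is a family of such pairs and yields a classifying morphism $B\to\mathcal{M}_{r,a,\delta}^{s}$; restricting to the locus where the pair is simple, which is open and, as one checks on the general member (whose Picard lattice is exactly $T$), nonempty, defines the rational map $\Theta$.

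The heart of the argument is the comparison of periods. For a moduli space of sheaves one has a Hodge isometry $T(M_v(S))\cong T(S)$, because $v$ is an algebraic class of the Mukai lattice; moreover this isometry intertwines $\varphi^{\ast}$ and $\psi^{\ast}$ precisely because $\varphi$ is induced. On the general member $T(S)=Z'$ and $T(M_v(S))=Z$, so the isometry descends to an identification $Z\cong Z'$ of the two $2$-elementary lattices, which then have the common rank $22-r'=23-r$ (that is, $r'=r-1$) and signature $(2,\ast)$; for $T=U(2)\oplus D_4(-1)$ this matches the displayed isometry $U(2)\oplus D_4(-1)\oplus\langle 2\rangle\cong\langle -2\rangle^{\oplus 4}\oplus\langle 2\rangle\oplus U$. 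Under $Z\cong Z'$ the period domains are identified, $[\sigma_S]$ is carried to $[\sigma_{M_v(S)}]$, and the discriminant arrangements $\mathcal{H}_{K3}$ and $\mathcal{H}_K$ agree up to lower-dimensional loci; hence, via the period isomorphisms $\mathcal{M}_{r',a',\delta'}^{K3}\cong(D_{Z'}\setminus\mathcal{H}_{K3})/\Gamma_{Z'}$ and $\mathcal{M}_{r,a,\delta}^{s}\cong(D_Z^{+}\setminus\mathcal{H}_K)/\Gamma_{Z,K}$, the map $\Theta$ is identified with the map induced by the identity of $D_Z$ on the two arithmetic quotients. Dominance is then immediate: the dimensions agree, $\dim\mathcal{M}_{r',a',\delta'}^{K3}=20-r'=21-r=\dim\mathcal{M}_{r,a,\delta}^{s}$, both period maps are isomorphisms, and $\Gamma_{Z'}$, $\Gamma_{Z,K}$ are finite-index subgroups of the same orthogonal group $O(Z)$, hence commensurable, so the induced map of quotients is generically finite and dominant.

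I expect the main obstacle to be bookkeeping rather than geometry. One must verify that $\Theta$ is genuinely a rational map of the coarse moduli spaces, i.e.\ that the family can be built over a unirational base and that the generic fibre is a simple pair of type $T$; and one must check that the identification $Z\cong Z'$ is compatible with the orientation and with the chamber $K$ used to define $\Omega_{r,a,\delta}^{s}$, so that $\Gamma_{Z'}$ really lands in a subgroup commensurable with $\Gamma_{Z,K}$ and $D_{Z'}$ matches the chosen component $D_Z^{+}$. Once these lattice- and group-theoretic compatibilities, together with the Hodge isometry $T(M_v(S))\cong T(S)$, are in place, the dominance of $\Theta$ and the transfer of unirationality follow formally.
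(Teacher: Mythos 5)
Your overall strategy is the paper's: identify $Z\cong Z'$ via the $G$-equivariant Mukai isometry of transcendental lattices, identify the two period domains, and transfer unirationality along a dominant rational map $\mathcal{M}_{r',a',\delta'}^{K3}\dashrightarrow\mathcal{M}_{r,a,\delta}^{s}$. But your execution has a genuine gap at the decisive step, the comparison of the arithmetic groups. You argue that $\Gamma_{Z'}$ and $\Gamma_{Z,K}$ are \emph{commensurable} and conclude that ``the induced map of quotients is generically finite and dominant.'' Commensurability does not induce a map at all: the identity of $D_Z$ descends to a morphism $D_Z/\Gamma_{Z'}\rightarrow D_Z/\Gamma_{Z,K}$ only if $\Gamma_{Z'}\subset\Gamma_{Z,K}$; otherwise one only gets a correspondence through $D_Z/(\Gamma_{Z'}\cap\Gamma_{Z,K})$, and unirationality does not pass through such a correspondence, since a finite cover of a unirational variety need not be unirational. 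So as written the last step fails. The paper closes exactly this hole by proving the precise equality $\Gamma_{Z'}=\Gamma_{Z}=\Gamma_{Z,K}$: first $\Gamma_{Z,K}=\Gamma_Z$ by a spinor-norm argument (for $g\in\Gamma_Z$ the extension of $\id_T\oplus g$ to $L$ has spinor norm $1$, hence is a monodromy operator preserving the chamber $K$), and then $\Gamma_Z=\Gamma_{Z'}$ via Markman's description of monodromy: both groups coincide with the orientation-preserving isometries of the Mukai lattice $\Lambda$ preserving $\Lambda^{1,1}$, because $T(X)$ is the orthogonal complement of $\Pic(X)\oplus\langle v\rangle$, so the saturation of $\Pic(X)\oplus\langle v\rangle$ \emph{is} $\Lambda^{1,1}$ and fixing $(v,\Pic(X))$ is the same as fixing $\Lambda^{1,1}$. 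This yields the stronger conclusion $\mathcal{M}_{r,a,\delta}^{s}\cong\mathcal{M}_{r',a',\delta'}^{K3}\setminus(\mathcal{H}_s/\Gamma_Z)$, and the map extends as a rational map because $\mathcal{H}_s$ is locally finite for the $\Gamma_Z$-action.

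Your fallback construction of $\Theta$ via relative moduli spaces over a ``unirational cover'' does not rescue the argument cheaply: the coarse moduli space carries no universal family, and the assertion that ``$v$ and the relevant markings are monodromy invariant'' is precisely the nontrivial content that the paper's Markman-based identification $\Gamma_Z=\Gamma_{Z'}$ encodes. If you insist on the modular route, the well-definedness of $(S,\psi)\mapsto(M_v(S),\varphi)$ on coarse spaces forces $\Gamma_{Z'}\subset\Gamma_{Z,K}$, so you cannot avoid proving (at least) that inclusion; the paper's purely lattice-theoretic proof shows it is in fact an equality and dispenses with families altogether. The rest of your proposal (the identification $Z\cong Z'$, the dimension count $r'=r-1$, and the observation that the extra hyperplanes from $-10$ classes of divisibility $2$ and from $\mathcal{H}_K$ only remove divisorial loci, hence are harmless for a rational map) is consistent with the paper.
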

\begin{proof}
The moduli spaces we are dealing with are quotients of the corresponding period domains by the groups of monodromy operators fixing the invariant lattice for the involutions. We will prove the proposition by showing that the period domains are isomorphic and the groups involved are one contained into the other, thus giving a surjective map $\mathcal{M}_{r',a',\delta'}^{K3}\dashrightarrow\mathcal{M}_{r,a,\delta}^{s}$.

By the Mukai isometry, the transcendental lattice of the $K3$ surface $S$ above is isometric with the transcendental lattice of any moduli space $X$ of stable sheaves on it, and this isometry is equivariant with the induced group action, as proven in \cite[Lemma 2.34]{MW}.  As a consequence, since for general elements in both moduli spaces $\mathcal{M}_{r',a',\delta'}^{K3}$ and $\mathcal{M}_{r,a,\delta}^{s}$ the transcendental lattice is isometric to the orthogonal to the invariant sublattice of the automorphism, we obtain that $Z\cong Z'$. As a consequence the period domain is in both cases $D_Z$ and $\Delta_{K3}(Z')\subset \Delta(Z)$.

In order to obtain the moduli space $\mathcal{M}_{r',a',\delta'}^{K3}$ of pairs $(S,\psi)$, we are removing the hyperplanes in $\mathcal{H}_{K3}$, whereas in order to construct $\mathcal{M}_{r,a,\delta}^{s}$ we have seen that we have to remove also the arrangement $\mathcal{H}_s$ of hyperplanes associated to $-10$ classes in $Z$ with divisibility $2$ and hyperplanes in $\mathcal{H}_K$. 

Let us look at the two groups involved: $\Gamma_{Z'}$ is the projection on $Z'\cong Z$ of the group of monodromy operators of $L_{K3}$ preserving the invariant lattice $T'$, and it is canonically isomorphic with the group of orientation preserving isometries of the Mukai lattice $\Lambda$ preserving $\Lambda^{1,1}$.
The group $\Gamma_{Z,K}$ is the projection on $Z'\cong Z$ of the group of monodromy operators of $L$ preserving the invariant lattice $T$ and a chamber $K$. We claim that $\Gamma_{Z,K}=\Gamma_Z$: indeed, given $g\in\Gamma_Z$, let $G\in\Mon^2(L,T)$ be such that $G_{|T}=g$. In particular, $G$ has spinor norm in $L\otimes \mathbb{R}$ equal to $1$, hence it leaves invariant the chosen connected component of the positive cone of $L\otimes \mathbb{R}$; this implies that $G$ preserves also $C_T$, and as a consequence the spinor norm of $G_{|T}=1$; this implies, by multiplicativity, that the spinor norm of $g$ in $Z\otimes \mathbb{R}$ is also $1$. The claim is then proven by observing that the extension $\tilde{G}$ of $\id_T\oplus g$ to $L$ has again spinor norm $1$, hence it is a monodromy operator which restricts to $g$ on $Z$ and which fixes $K$, i.e. $g\in\Gamma_{Z,K}$.

Given any element $(X,\eta)\in\mathcal{M}_T$ such that $X$ is a moduli space $M_v(S)$ of stable sheaves on a $K3$ surface and $\Pic(X)\cong T$, the elements of $\mathcal{G}:=\eta^{-1}\circ\Gamma_Z\circ \eta$ can be seen, as shown by Markman \cite{mark_tor}, as the group of orientation preserving isometries of the Mukai lattice preserving the Mukai vector $v$ and the invariant lattice $\Pic(X)$. Notice finally that $T(X)$ is the orthogonal complement of $\Pic(X)\oplus v$, therefore the primitive lattice containing $\Pic(X)$ and $v$ coincides with the algebraic part $\Lambda^{1,1}$ of the Mukai lattice $\Lambda$ and $\Gamma_Z$ coincides with the group of orientation preserving isometries preserving it. Thus, $\Gamma_Z=\Gamma_{Z'}$ and we get an isomorphism $\mathcal{M}_{r,a,\delta}^{s}\cong \mathcal{M}_{r',a',\delta'}^{K3}\setminus (\mathcal{H}_s/\Gamma_Z)$. Since $\mathcal{H}_s$ is locally finite for the action of $\Gamma_Z$, we obtain that the map extends to the desired rational map between the two moduli 
spaces.
\end{proof}
\begin{rem}
 The proof of Proposition \ref{moduli1} easily extends to moduli spaces of pairs $(X,\varphi)$ consisting of a moduli space of sheaves $X$ of dimension $2n$ on a K3 surface $S$ and an involution $\varphi$ such that $\varphi$ is induced from a non symplectic involution $\psi$ on $S$. The only difference is that in the case of manifolds of \kntiposp type for $n\geq 3$, the monodromy group of the Beauville--Bogomolov--Fujiki lattice $L_n:=U^{\oplus 3}\oplus E_8(-1)^{\oplus 2}\oplus \langle -2(n-1)\rangle$ is the subgroup of isometries which are orientation preserving and which induce $\pm \id$ on the discriminant group $L_n$; as a consequence, the arithmetic group $\Gamma_{Z'}$ is only contained inside $\Gamma_{Z}$.
 \end{rem}
 
 Another generalization is possible: to the period domains corresponding to $(\rho_{\varphi},T_{\varphi})$-polarized pairs $(X,\eta)$, with $X$ a moduli space of sheaves of dimension $2n$ on a K3 surface $S$ and $\eta$ a marking, when the $(\rho_{\varphi},T_{\varphi})$-polarization is defined, in the sense of \cite{BCS2}, starting from an induced non symplectic automorphism $\varphi$ of odd prime order. In particular, one could obtain the following:
\begin{prop}
Let $X$ be a moduli space of sheaves on a K3 surface $S$ and $\varphi\in\Aut(X)$ be a non symplectic automorphism of odd prime order which is induced from an automorphism $\psi\in\Aut(S)$. Suppose moreover that the period domain of $(\rho_{\psi},T_{\psi})$-polarized pairs $(S,\eta)$, in the sense of \cite{Dol-Kondo}, is unirational. Then the period domain for $(\rho_{\varphi},T_{\varphi})$-polarized pairs $(X,\eta')$ of \kntiposp is also unirational.
\end{prop}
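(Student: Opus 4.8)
The plan is to adapt the proof of Proposition~\ref{moduli1} to this setting, the only genuinely new ingredient being the bookkeeping of the eigenspaces for the action of the cyclic group of order $p$. As in \emph{loc.\ cit.}, the strategy is to show that the two period domains are isomorphic (they are complex balls here rather than type IV domains) and that the relevant arithmetic groups are one contained in the other, so that unirationality of the source forces unirationality of the target.

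First I would invoke the equivariant Mukai isometry of \cite[Lemma~2.34]{MW}: since $\varphi$ is induced from $\psi$, the Hodge isometry $T(X)\cong T(S)$ of transcendental lattices intertwines the actions of $\varphi^{\ast}$ and $\psi^{\ast}$. In particular $\varphi^{\ast}\sigma_X=\zeta\sigma_X$ for the same primitive $p$-th root of unity $\zeta$ such that $\psi^{\ast}\sigma_S=\zeta\sigma_S$. For a general member of either family the invariant lattice coincides with the whole Picard lattice, hence the co-invariant lattice equals the transcendental lattice; the equivariant isometry therefore identifies the co-invariant lattice $S_{\varphi}$ of $\varphi$ with the co-invariant lattice $S_{\psi}$ of $\psi$, \emph{compatibly with the automorphism action}.

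Next I would unwind the Dolgachev--Kondo description of the period domains. Decomposing $S_{\varphi}\otimes\C=\bigoplus_{j=1}^{p-1}S_{\varphi}^{(\zeta^{j})}$ into eigenspaces, the period $\sigma_X$ lies in $S_{\varphi}^{(\zeta)}$, on which the induced Hermitian form has signature $(1,d-1)$ with $d=\dim_{\C}S_{\varphi}^{(\zeta)}$; thus the period domain is a complex ball of dimension $d-1$ inside $\PP(S_{\varphi}^{(\zeta)})$. The same holds on the $K3$ side, and the isometry of the previous step identifies $S_{\varphi}^{(\zeta)}$ with $S_{\psi}^{(\zeta)}$ as Hermitian spaces; hence the two balls are isomorphic.

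Finally I would compare the arithmetic groups exactly as in the proof of Proposition~\ref{moduli1}: using Markman's description \cite{mark_tor}, the monodromy group attached to $(S,\psi)$ is canonically the group of orientation preserving isometries of the Mukai lattice commuting with the induced action and preserving $\Lambda^{1,1}$, and the same bookkeeping as in \emph{loc.\ cit.} shows that this group is contained in the corresponding monodromy group for $(X,\varphi)$ (the inclusion may be strict for \kntipo with $n\geq 3$, as already noted in the remark after Proposition~\ref{moduli1}). This produces a dominant map from the period domain of $(\rho_{\psi},T_{\psi})$-polarized pairs onto the period domain of $(\rho_{\varphi},T_{\varphi})$-polarized pairs, once the locally finite discriminant arrangements are removed on both sides; since the former is unirational by hypothesis, so is the latter. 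The main obstacle is precisely this last step: one must check that the $(\rho_{\varphi},T_{\varphi})$-polarization in the sense of \cite{BCS2} is compatible with the $(\rho_{\psi},T_{\psi})$-polarization of \cite{Dol-Kondo} under the Mukai isometry, and that the discriminant loci removed on the two sides agree up to a locally finite arrangement, so that the surjection of quotients extends to a genuine dominant rational map of the period spaces.
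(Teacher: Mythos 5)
Your proposal follows exactly the route the paper intends: the paper explicitly declines to write out a proof of this proposition, stating only that it is the generalization of Proposition~\ref{moduli1} to the ball-quotient period domains of \cite{Dol-Kondo} and \cite{BCS2}, and your sketch --- the equivariant Mukai isometry of \cite[Lemma 2.34]{MW} identifying the co-invariant lattices compatibly with the order-$p$ action, the resulting identification of the $\zeta$-eigenspace complex balls, and the inclusion of arithmetic groups (with the correct caveat that for \kntiposp with $n\geq 3$ the inclusion $\Gamma_{Z'}\subseteq\Gamma_Z$ may be strict, which still gives a dominant map of quotients in the right direction) --- is precisely that adaptation. The remaining verification you flag, namely the compatibility of the two notions of $(\rho,T)$-polarization and of the removed locally finite hyperplane arrangements, is exactly the notational burden the paper cites as its reason for omitting the proof, so your attempt matches the paper's approach with no genuine gap beyond what the paper itself leaves implicit.
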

We do not give a precise proof of this here in order to avoid introducing all the new notation required in the case of odd prime order, as introduced in \cite{Dol-Kondo} and \cite{BCS2}, since it is out of the scopes of this paper. Instead, we end this section with the proof of Corollary \ref{cor}.

\begin{proof}[Proof of Corollary \ref{cor}] 
In the case $U(2)\oplus E_8(-2)$, it is enough to observe that the space $\mathcal{V}_2$ of symmetric Verra
threefolds is unirational. We conclude by the considering the map (\ref{kk}) in the proof of Theorem \ref{main}.

In the case of $U(2)\oplus D_4(-1)$ the involution is induced by a non symplectic involution on a $K3$ surface with invariant lattice isometric to $\langle 2\rangle\oplus \langle -2\rangle^{\oplus 4}$, $2$-elementary sublattice with invariants $(5,5,1)$. By the work of Shepherd-Barron \cite[Theorem 6]{SB} and of Artebani and Kond\=o \cite[Theorem 2.7]{AK} (see also \cite{Ma}), the moduli space $\mathcal{M}_{5,5,1}^{K3}$ of $K3$ surfaces of this kind is rational. Hence, by Proposition \ref{moduli1} we deduce that $\mathcal{M}_{6,4,0}^s$ is unirational.\end{proof}

\end{document}